\documentclass[11pt]{article}
\usepackage[a4paper,margin=30mm]{geometry}
\usepackage{amsmath,amssymb,amsthm,mathtools}
\usepackage{microtype}
\usepackage[numbers,sort&compress]{natbib}
\usepackage[hidelinks]{hyperref}
\hypersetup{
  pdftitle={Sharp Typical Distance and Exponential Small-Ball Bounds in One-Step-Cliff Nielsen Geometry},
  pdfauthor={Honghuai Fang},
  pdfsubject={Sharp Haar-typical distance in Nielsen quantum complexity geometry},
  pdfkeywords={quantum complexity geometry, right-invariant metrics, Haar-typical distance, Jacobi fields, small-ball probabilities, Weyl integration}
}
\usepackage[T1]{fontenc}
\usepackage{lmodern}
\allowdisplaybreaks

\newtheorem{theorem}{Theorem}[section]
\newtheorem{lemma}[theorem]{Lemma}
\newtheorem{proposition}[theorem]{Proposition}
\newtheorem{corollary}[theorem]{Corollary}

\newcommand{\PU}{\operatorname{PU}}
\newcommand{\Vol}{\operatorname{Vol}}
\newcommand{\Tr}{\operatorname{Tr}}
\newcommand{\diam}{\operatorname{diam}}
\newcommand{\spec}{\operatorname{spec}}
\newcommand{\ad}{\operatorname{ad}}
\newcommand{\Ad}{\operatorname{Ad}}
\newcommand{\op}{\mathrm{op}}
\newcommand{\HS}{\mathrm{HS}}
\newcommand{\Sone}{S_1}
\newcommand{\sinc}{\operatorname{sinc}}
\newcommand{\E}{\mathbb E}
\newcommand{\Prob}{\mathbb P}
\newcommand{\cL}{\mathcal L}
\newcommand{\cP}{\mathcal P}
\newcommand{\rank}{\operatorname{rank}}
\newcommand{\sym}{\operatorname{sym}}
\newcommand{\Arg}{\operatorname{Arg}}
\newcommand{\diag}{\operatorname{diag}}
\newcommand{\vol}{\operatorname{vol}}

\title{Sharp Typical Distance and Exponential Small-Ball Bounds\\
in One-Step-Cliff Nielsen Geometry}
\author{Honghuai Fang\thanks{Institute for Theoretical Sciences,
Westlake University, 600 Dunyu Road, Xihu District, Hangzhou,
Zhejiang 310030, China.
Corresponding author: \texttt{fanghonghuai@westlake.edu.cn}.}}
\date{}

\begin{document}
\maketitle

\begin{abstract}
Let $D=2^n$ and equip $\PU(D)$ with the one-step-cliff Nielsen metric,
with quadratic metric coefficients one in Pauli directions of weights one
and two and $D^2$ in all higher weights.  We prove that the distance from the identity of
a Haar-random element, normalized by $D$, converges to $\pi/\sqrt3$ in
probability and in $L^p$ for every $1\le p<\infty$.  Quantitatively, it lies within
$O(D^{-1/8}(\log D)^{1/2})$ of this limit outside a set of Haar measure at
most $\exp\{-\Omega(D^{7/4}\log D)\}$.  For each fixed
$0<x<\pi/\sqrt3$, the ball of radius $xD$ has Haar measure
$\exp\{-\Theta_x(D^2)\}$; for $x>\pi/\sqrt3$, its complement has measure at most
$e^{-c_xD^2}$ for some $c_x>0$.  As $x\uparrow\pi/\sqrt3$, the lower and upper
logarithmic rates are both asymptotic to
$(\pi^2/3-x^2)^2/(16\zeta(3))$.

The small-ball upper bound follows from a comparison of Jacobi determinants,
obtained by rescaling the linearized geodesic equations and applying Kato
transport.  Weyl integration reduces the remaining integral to an
Abel-regularized logarithmic-energy estimate on the circle.  A centered
principal logarithm and concentration of the circular unitary ensemble
eigenangle second moment give the distance upper bound.
\end{abstract}

\medskip
\noindent\textbf{Keywords.}
quantum complexity geometry; right-invariant metrics; Haar-typical distance;
Jacobi fields; small-ball probabilities; Weyl integration.

\medskip
\noindent\textbf{2020 Mathematics Subject Classification.}
53C22 (Primary); 53C30, 60B20, 60F10, 81P68 (Secondary).

\section{Introduction}\label{sec:intro}

Nielsen's geometric approach to quantum complexity assigns a cost to a
Hamiltonian according to the number of qubits on which it acts
\cite{NielsenQIC,NielsenScience,NielsenPRA,DowlingNielsen}.  Quantum evolutions
then become paths in a right-invariant metric on the unitary group.  Since
one- and two-qubit gates can be implemented by paths of bounded length,
metric lower bounds give circuit lower bounds.  We study the one-step-cliff
metric on the projective unitary group $\PU(D)=U(D)/U(1)$, where $D=2^n$.
The quadratic metric coefficients are one in Pauli directions of weights
one and two, and $D^2$ in all higher weights.  Our main result determines the
first-order distance from the identity of a Haar-random projective unitary.

The candidate constant is suggested by a simple path.  Choose a representative
$U\in U(D)$ with principal eigenangles $\theta_1,\ldots,\theta_D\in(-\pi,\pi]$,
and subtract their mean from its Hermitian logarithm.  The resulting
traceless matrix generates a path from $[I]$ to $[U]$.  Since no quadratic
metric coefficient exceeds $D^2$, this path has length at most
\[
 D\left(\frac1D\sum_{j=1}^D\theta_j^2\right)^{1/2}.
\]
For a Haar unitary, the empirical eigenvalue measure converges to uniform
arc length \cite{HiaiPetz}, and
\[
 \frac1{2\pi}\int_{-\pi}^{\pi}\theta^2\,d\theta=\frac{\pi^2}{3}.
\]
This gives the typical upper bound $(\pi/\sqrt3+o(1))D$.  The difficulty is
in the reverse inequality.  A minimizing path may change its Hamiltonian
throughout the evolution and use the low-weight Pauli directions to shorten
the route.  We prove that, with probability tending to one as $D\to\infty$, these
directions cannot reduce the length by a fixed positive fraction of $D$.

Let $d_A$ denote the distance and $\mu_D$ normalized Haar measure.  The
low-weight Pauli subspace has dimension
\[
 M=3n+9\binom n2=O((\log D)^2),
 \qquad
 \eta_D=\left(\frac{M}{D^{1/2}}\right)^{1/4}.
\]
Theorem~\ref{thm:sharp} states that, for suitable absolute constants $c,C>0$,
\[
 \Prob\!\left\{
 \left|\frac{d_A([I],[U])}{D}-\frac{\pi}{\sqrt3}\right|>C\eta_D
 \right\}
 \le 2e^{-cM^{1/2}D^{7/4}}.
\]
In particular, the normalized distance converges to $\pi/\sqrt3$ in
probability and in every finite $L^p$, $1\le p<\infty$.
Theorem~\ref{thm:ball-probabilities} gives the corresponding fixed-radius
estimates: for $0<x<\pi/\sqrt3$,
\[
 \mu_D\bigl(B_A([I],xD)\bigr)=e^{-\Theta_x(D^2)},
\]
whereas for $x>\pi/\sqrt3$ the complement has measure at most
$e^{-c_xD^2}$.  Thus $D^2$ is the optimal large-deviation speed below the
threshold.  The lower and upper logarithmic rates have the common
near-threshold asymptotic
\[
 \frac{(\pi^2/3-x^2)^2}{16\zeta(3)}
 \qquad (x\uparrow\pi/\sqrt3),
\]
as stated in Corollary~\ref{cor:local-rate}.

\subsection*{Relation to prior work}

Brown used Ricci curvature estimates and the Bishop--Gromov comparison theorem
to obtain exponential lower bounds for typical Nielsen complexity under broad
classes of penalty schedules \cite{BrownBG}.  With his qubit number equal to
$n$ and his cliff parameter equal to $D^2$, the staircase-enhanced bound
\cite[Eq.~(2.6)]{BrownBG}, which suppresses factors subexponential in $n$,
gives a lower bound of the form
\[
 C_{\mathrm{typical}}\ge 2^{n-o(n)}=D^{1-o(1)}.
\]
This determines the exponential growth rate in $n$, rather than a positive
limiting value of $C_{\mathrm{typical}}/D$.  We determine that value and prove
fixed-radius probability estimates on both sides of it.  The central
$U(1)$ direction has bounded diameter in Brown's normalization and does not
affect this exponential-scale comparison with $\PU(D)$.

Dowling and Nielsen derived the connection, geodesic equation, curvature,
and Jacobi equation for Nielsen metrics \cite{DowlingNielsen}.
Rios Ribeiro and Trancanelli studied multiple cost factors and the resulting
families of conjugate points \cite{RibeiroTrancanelli}.  Our lower bound uses
the full time-dependent Jacobi propagator to control the volume of a metric
ball.  The conjugate-time upper estimate of Le Brigant, Lichtenfelz, and
Preston \cite[Corollary~3.5]{LBLP} restricts the phase range along a minimizing
geodesic.  Section~\ref{sec:jacobi-dynamics} includes a proof of the required
estimate, beginning with the second variation of energy in the right-invariant
convention.

The random-matrix input is the eigenvalue law of the circular unitary ensemble
(CUE).  Hiai and Petz proved its empirical-measure large-deviation principle
at speed $D^2$ \cite{HiaiPetz}.  Diaconis and Evans proved Gaussian limits for
linear statistics satisfying a Fourier summability condition
\cite[Theorem~5.1]{DiaconisEvans}.  An Abel estimate for the circle logarithmic
kernel gives the finite-$D$ second-moment concentration used here.  The
large-deviation principle supplies the matching probability lower bounds,
and the linear-statistic theorem gives the fluctuations of the logarithmic
path associated with a Haar representative in
Appendix~\ref{app:logpath-fluctuations}.

The metric result should be distinguished from exact circuit synthesis.
Parameter counting gives an $\Omega(D^2)$ lower bound on the number of
continuous two-qubit gates needed for a Haar-generic exact synthesis;
matching-order constructions under connected qubit-connectivity constraints
are given in \cite{YuanCircuit}.  Corollary~\ref{cor:circuit} instead gives an
$\Omega(D)$ lower bound stable under a fixed projective operator-norm error,
with an explicit exceptional probability.  Brown's polynomial comparisons
between circuit models and complexity geometries address a broader range of
metrics \cite{BrownPoly}.

\subsection*{Outline of the proof}

The main task is to bound the volume swept out by minimizing geodesics.
Although the low-weight subspace has dimension only $M$, a low-rank
perturbation at each time need not produce a low-rank change in the endpoint
Jacobi map: its range can move during the evolution.  We therefore estimate
the accumulated effect of this motion rather than freeze the instantaneous
curvature spectrum.

In the frame of the conserved momentum, the inverse inertia is
\[
 G(t)=P(t)+D^{-2}P(t)^\perp,
\]
where $P(t)$ is an orthogonal projection of rank $M$.  Scaling position and
momentum variations by opposite quarter powers of $G(t)$ balances the terms
coming from $P'(t)$ and the off-diagonal commutator.  Their integrated trace
norm over a time interval of length $O(D)$ is $O(MD^{3/2})$.
Kato transport then permits comparison with a linear system having constant
coefficients.  The determinant estimate in Theorem~\ref{thm:output}
converts these estimates into a determinant bound even when the reference
Jacobi map is singular.

Next, a Gaussian density comparison transfers the average over the
high-weight Pauli subspace to the full traceless Hermitian sphere.  The
squared Vandermonde in Weyl's formula cancels the denominators of the sinc
factors of the constant-coefficient model.  What remains is a regularized
chord-length product on the
circle.  The conjugate-time estimate bounds its lifted phase range, and the
momentum normalization bounds its principal-angle second moment.  A deficit
$\Delta$ from $\pi^2/3$ then contributes a negative logarithmic-volume term
of order $\Delta^2D^2$.

The quantitative scale comes from comparing this term with the Jacobi error.
The Abel estimate uses a relative singular-value cutoff $\rho\asymp\Delta^2$,
so the two leading terms in the logarithmic bound are
\[
 -c\Delta^2D^2
 \qquad\text{and}\qquad
 C\Delta^{-2}MD^{3/2}.
\]
The negative term dominates when $\Delta$ is a sufficiently large constant
multiple of $(M/D^{1/2})^{1/4}$.  This yields the window $\eta_D$ and the
exceptional exponent $M^{1/2}D^{7/4}$.  The same circle estimate controls
both signs of the CUE second-moment deviation and, with the logarithmic
path, gives the matching distance upper bound.

Sections~\ref{sec:jacobi-dynamics}--\ref{sec:abel-weyl} establish the geometric
and integral comparisons.  Section~\ref{sec:small-ball} proves the lower-tail
estimates.  Section~\ref{sec:cue-upper} combines them with the CUE bounds and
derives bounds for the lower and upper logarithmic rates.
Section~\ref{sec:circuit-outlook} proves the
circuit consequence and discusses the remaining geometric questions.

\section{Model and main results}\label{sec:model-results}

Throughout, $D=2^n$ and $n\to\infty$, so $D$ ranges over powers of two.
Let $\PU(D)=U(D)/U(1)$, write $[U]$ for the projective class of
$U\in U(D)$, and let $[I]$ be the identity class.  Identify
$\mathfrak g=\mathfrak{pu}(D)$ with the traceless skew-Hermitian matrices and
equip it with the normalized Hilbert--Schmidt inner product
\begin{equation}\label{eq:hs}
 \langle X,Y\rangle_0=D^{-1}\Tr(X^\dagger Y),
 \qquad \|X\|_0=\langle X,X\rangle_0^{1/2}.
\end{equation}
We use the convention $\sinc(t)=\sin(t)/t$ for $t\ne0$ and $\sinc(0)=1$.

For $\boldsymbol\alpha=(\alpha_1,\ldots,\alpha_n)\in\{0,1,2,3\}^n$, let
$W_{\boldsymbol\alpha}=\sigma_{\alpha_1}\otimes\cdots\otimes
\sigma_{\alpha_n}$, where $\sigma_0=I_2$ and
$\sigma_1,\sigma_2,\sigma_3$ are the usual Pauli matrices.  Its Pauli
weight is
\[
 |\boldsymbol\alpha|=\#\{j: \alpha_j\ne0\}.
\]
The matrices $iW_{\boldsymbol\alpha}$ with $\boldsymbol\alpha\ne0$ form
an orthonormal basis of $\mathfrak g$.  Let $P$ be the orthogonal projection
onto the span of the basis elements of weights one and two.  For every
orthogonal projection $R$ below, write $R^\perp=I-R$.  Thus
$P\mathfrak g$ and $P^\perp\mathfrak g$ are the low-weight and
high-weight Pauli subspaces, respectively.  The rank of $P$ is
\begin{equation}\label{eq:M}
 M=3n+9\binom n2=O(n^2)=O((\log D)^2).
\end{equation}
Put $N=D^2-1$, set
\[
 S_0^{N-1}=\{X\in\mathfrak g: \|X\|_0=1\},
\]
and let $Q=D^2$.  Define the one-step-cliff inertia
\begin{equation}\label{eq:inertia}
 A=P+QP^\perp,\qquad G=A^{-1}=P+qP^\perp,\qquad q=Q^{-1}=D^{-2}.
\end{equation}
The corresponding inner product and norm at the identity are
$\langle X,Y\rangle_A=\langle X,AY\rangle_0$ and
$\|X\|_A=\langle X,X\rangle_A^{1/2}$, extended right-invariantly.  Geodesic speeds
are measured in the $A$-metric.  Let $d_A$ and
$\Vol_A$ denote the resulting distance and Riemannian volume.  For
$p\in\PU(D)$ and $r>0$, define the open metric ball
\[
 B_A(p,r)=\{q\in\PU(D):d_A(p,q)<r\}.
\]
For a piecewise $C^1$ path $\gamma:[a,b]\to\PU(D)$, write
$\ell_A(\gamma)=\int_a^b\|\dot\gamma(t)\|_A\,dt$.  We write
$\Vol_0$ for the Riemannian volume
associated with $\langle\cdot,\cdot\rangle_0$.  For every measurable
$E\subseteq\PU(D)$, define
\begin{equation}\label{eq:haar-definition}
 \mu_D(E):=\frac{\Vol_A(E)}{\Vol_A\PU(D)}.
\end{equation}
Because $\Vol_A$ is right invariant, $\mu_D$ is normalized Haar measure on
$\PU(D)$.  For $[U],[V]\in\PU(D)$, define the projective operator-norm
distance
\[
 \delta_{\op}([U],[V])=\inf_{\phi\in\mathbb R}
 \|U-e^{i\phi}V\|_{\op},
\]
where $\|\cdot\|_{\op}$ is the usual operator norm on $\mathbb C^D$.

\begin{theorem}[Sharp Haar-typical distance]\label{thm:sharp}
Set
\begin{equation}\label{eq:eta-sharp}
 \eta_D=\left(\frac{M}{D^{1/2}}\right)^{1/4}.
\end{equation}
There are positive absolute constants $c_{\rm sh},C_{\rm sh},D_{\rm sh}$ such
that, for every power of two $D\ge D_{\rm sh}$ and Haar-random
$[U]\in\PU(D)$,
\begin{equation}\label{eq:sharp-concentration}
 \Prob\!\left\{
 \left|\frac{d_A([I],[U])}{D}-\frac{\pi}{\sqrt3}\right|
 >C_{\rm sh}\eta_D
 \right\}
 \le2\exp\{-c_{\rm sh}M^{1/2}D^{7/4}\}.
\end{equation}
Consequently,
\begin{equation}\label{eq:sharp-convergence}
 \frac{d_A([I],[U])}{D}
 \longrightarrow\frac{\pi}{\sqrt3}
\end{equation}
in probability and in $L^p$ for every fixed $1\le p<\infty$.
\end{theorem}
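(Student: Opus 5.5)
The proof splits into the upper tail $d_A>(\pi/\sqrt3+C\eta_D)D$ and the lower tail $d_A<(\pi/\sqrt3-C\eta_D)D$. Each is bounded by $\exp\{-cM^{1/2}D^{7/4}\}$, which gives the factor $2$. The convergence statements then follow at once: the deviation window $\eta_D\to0$ and the exceptional probability is summable. Moreover $d_A([I],[U])/D$ is uniformly bounded by the diameter estimate from the logarithmic path, since $\theta_j\in(-\pi,\pi]$ gives $d_A\le\pi D$. Dominated convergence therefore gives $L^p$ convergence for every finite $p$.

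\emph{Upper tail.} I will use the explicit path from the introduction. For a representative $U$ with principal eigenangles $\theta_j$, take the centered Hermitian logarithm $H=\sum_j(\theta_j-\bar\theta)E_j$, where $\bar\theta=D^{-1}\sum_j\theta_j$. The path $t\mapsto[e^{itH}]$ joins $[I]$ to $[U]$. Every quadratic coefficient of $A$ is at most $D^2$, so its length is at most
\[
 D\|H\|_0\le D\Bigl(\frac1D\sum_j\theta_j^2\Bigr)^{1/2}.
\]
It therefore suffices to show that the CUE second moment $m_2=D^{-1}\sum_j\theta_j^2$ exceeds $\pi^2/3+\Delta$ with probability at most $\exp\{-c\Delta^2D^2\}$. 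With $\Delta\asymp\eta_D$ this gives $\exp\{-cM^{1/2}D^{3/2}\}\le \exp\{-cM^{1/2}D^{7/4}\}$ only if we take $\Delta$ a bit larger. In fact, choosing $\Delta=C\eta_D$ gives $\Delta^2D^2=C^2M^{1/2}D^{7/4}$ exactly, so the exponents match. For the concentration bound I will use the Weyl integration formula and the Abel-regularized logarithmic-energy estimate on the circle from the paper's circle section, applied to the sign of the deviation $m_2-\pi^2/3$ that the paper says it controls. Concretely, the CUE density is $\exp\{2\sum_{j<k}\log|e^{i\theta_j}-e^{i\theta_k}|\}$. A deviation $\Delta$ in $\int\theta^2\,d\mu$ costs logarithmic energy $\gtrsim\Delta^2$, by a quadratic lower bound on the energy around the uniform measure. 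After regularization this yields density loss $e^{-c\Delta^2D^2}$ up to a factor $e^{O(D\log D)}$ from the volume of configurations.

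\emph{Lower tail.} This is the main obstacle, and I will assemble it from the small-ball machinery. Every $[U]$ with $d_A<rD$, where $r=\pi/\sqrt3-\Delta$, is the endpoint of a minimizing geodesic $\exp(tG\,\Ad(\cdot)X)$ with $\|X\|$ normalized and $t\le rD$. Hence $\mu_D(B_A([I],rD))$ is at most the integral over initial momenta of the Jacobi determinant at the endpoint, divided by $\Vol_A\PU(D)$. The steps are as follows.
\begin{itemize}
\item[(i)] Compare the true Jacobi determinant with the constant-coefficient model, using Theorem~\ref{thm:output} together with the quarter-power rescaling and Kato transport. This costs a factor $\exp\{C\rho^{-1}MD^{3/2}\}$ at relative cutoff $\rho$.
\item[(ii)] Transfer the integral over the high-weight sphere to the full traceless sphere by the Gaussian density comparison.
\item[(iii)] Apply Weyl integration. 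The sinc factors cancel the Vandermonde denominators and leave a regularized chord-length product. The conjugate-time estimate of Section~\ref{sec:jacobi-dynamics} keeps the phase range lifted below $2\pi$ spread, and the momentum normalization bounds the second moment by $r^2<\pi^2/3$.
\item[(iv)] The circle energy estimate then gives $\log(\text{ratio})\le -c\Delta^2D^2$.
\end{itemize}
The total is $\log\mu_D\le -c\Delta^2D^2+C\Delta^{-2}MD^{3/2}+O(D\log D)$ with $\rho\asymp\Delta^2$. Taking $\Delta=C_{\rm sh}\eta_D$ with $C_{\rm sh}$ large makes the first term at least twice the second, because $\Delta^4D^2\gg MD^{3/2}$ exactly when $\Delta\gg\eta_D$. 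This leaves $-c'M^{1/2}D^{7/4}$. The delicate points are that the Jacobi error constant is uniform over all geodesics up to time $O(D)$, and that the cutoff $\rho$ can be tuned consistently in the Abel estimate. I expect step (i) to be the hardest.
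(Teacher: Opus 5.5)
Your proposal is correct and follows the paper's proof essentially step for step. The upper tail comes from the centered-logarithm path (Proposition~\ref{prop:log-path}) together with the Abel-estimate CUE bound (Proposition~\ref{prop:CUE-m2}) at $\Delta\asymp\eta_D$. The lower tail comes from the polar, Jacobi-comparison, Gaussian-transfer, Weyl and Abel chain of Proposition~\ref{prop:uniform-estimate}, calibrated as in Proposition~\ref{prop:calibrated}. Convergence in $L^p$ follows from the bound $d_A\le\pi D$. You leave implicit the removal of the low-weight momentum component (Lemma~\ref{lem:cheap-removal}) and the accompanying beta integral, and your remark about an exponent $M^{1/2}D^{3/2}$ is a slip that your next sentence already corrects.
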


\begin{theorem}[Fixed-radius ball probabilities]\label{thm:ball-probabilities}
For every fixed $0<x<\pi/\sqrt3$, there are constants
$0<c_x\le C_x<\infty$ and $D_x$ such that, for every power of two $D\ge D_x$,
\begin{equation}\label{eq:two-sided-speed}
 e^{-C_xD^2}
 \le\mu_D\!\left(B_A([I],xD)\right)
 \le e^{-c_xD^2}.
\end{equation}
For every fixed $x>\pi/\sqrt3$, there are constants $c_x^+>0$ and $D_x^+$
such that, for every power of two $D\ge D_x^+$,
\begin{equation}\label{eq:upper-threshold}
 1-\mu_D\!\left(B_A([I],xD)\right)\le e^{-c_x^+D^2}.
\end{equation}
Consequently,
\begin{equation}\label{eq:zero-one-threshold}
 \lim_{\substack{D\to\infty\\D=2^n}}
 \mu_D\!\left(B_A([I],xD)\right)
 =\begin{cases}
 0,&0<x<\pi/\sqrt3,\\
 1,&x>\pi/\sqrt3.
 \end{cases}
\end{equation}
In particular, the speed $D^2$ is optimal for every fixed subcritical radius.
\end{theorem}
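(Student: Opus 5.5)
Theorem~\ref{thm:ball-probabilities} will be proved by splitting it into three estimates, all reduced to spectral statements about a Haar representative $U\in U(D)$. Throughout, let $\theta_1,\dots,\theta_D\in(-\pi,\pi]$ be its principal eigenangles, $\bar\theta$ their mean, and
\[
 m_2(U)=\frac1D\sum_{j}(\theta_j-\bar\theta)^2 .
\]

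\textbf{Lower bound in \eqref{eq:two-sided-speed}.} The centered logarithmic path from the introduction gives
\[
 d_A([I],[U])\le D\,m_2(U)^{1/2}.
\]
So it suffices to show that the event $\{m_2(U)<x^2\}$ has Haar probability at least $e^{-C_xD^2}$.

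I would use the Hiai--Petz large-deviation principle for the CUE empirical measure $L_D$. The functional
\[
 \nu\mapsto\int\theta^2\,d\nu-\Bigl(\int\theta\,d\nu\Bigr)^2
\]
is not continuous at the branch cut. To avoid this, I take an open set of measures $\nu$ that are close to a smooth density supported in $[-\pi+\epsilon,\pi-\epsilon]$ and have centered second moment below $x^2$. Such a density exists for every $x>0$, for example a narrow bump around $0$. This density has finite logarithmic energy, so its rate is finite. The open set is chosen to consist of measures charging no mass near $\pm\pi$; on that set the functional is continuous in the weak topology. The lower LDP bound then gives
\[
 \mu_D\{m_2<x^2\}\ge e^{-(I(\nu)+o(1))D^2}.
\]
Projective invariance is harmless because Haar measure on $\PU(D)$ is the pushforward of Haar measure on $U(D)$.

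\textbf{Upper bound in \eqref{eq:two-sided-speed}.} This is the hard direction. I would invoke the small-ball machinery of Section~\ref{sec:small-ball}, namely the lower-tail estimate behind Theorem~\ref{thm:sharp}: for a deficit $\Delta$,
\[
 \log\mu_D\bigl(B_A([I],(\pi^2/3-\Delta)^{1/2}D)\bigr)\le -c\Delta^2D^2+C\Delta^{-2}MD^{3/2}+O(\text{lower order}).
\]
For fixed $x<\pi/\sqrt3$, set
\[
 \Delta=\pi^2/3-x^2>0 .
\]
Since $MD^{3/2}=o(D^2)$, the negative term dominates for $D\ge D_x$, which gives $e^{-c_xD^2}$ with, say, $c_x=c\Delta^2/2$.

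The main obstacle is making sure this lower-tail estimate is available at fixed macroscopic $\Delta$, and not only in the window $\Delta\asymp\eta_D$. Concretely, one must check that the Jacobi-determinant comparison (Theorem~\ref{thm:output}) and the Abel cutoff $\rho\asymp\Delta^2$ remain uniform for $\Delta$ bounded away from $0$, and that the bound is monotone in the radius. If the stated form only covers small $\Delta$, I would use monotonicity of balls in the radius and shrink to $x'<x$ with $\pi^2/3-x'^2$ small. Because that reduction runs the wrong way for an upper bound, I would instead redo the Abel-regularized energy estimate directly, with the cap $\pi^2/3-\Delta$ on the principal second moment; this remains negative of order $D^2$ for every $\Delta>0$.

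\textbf{Complement bound \eqref{eq:upper-threshold} and the limit.} For $x>\pi/\sqrt3$,
\[
 1-\mu_D(B_A([I],xD))\le\Prob\{m_2(U)\ge x^2\}.
\]
I would bound the right-hand side by the Hiai--Petz upper LDP bound applied to the closed set $\{\nu:\int\theta^2d\nu\ge x^2\}$, using principal angles. Lifting to non-centered moments is valid because centering only decreases the moment. This set is closed, since $\theta^2$ is lower semicontinuous on the circle with the cut at $\pi$. It excludes the uniform measure, whose second moment is $\pi^2/3$, and the rate function is a good rate function vanishing only at the uniform measure, so its infimum over the set is positive. Alternatively, I would use the paper's Abel-based CUE second-moment concentration, which handles both signs. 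Finally, \eqref{eq:zero-one-threshold} follows immediately from the two exponential bounds, and the optimality of the speed $D^2$ follows from the matching bounds in \eqref{eq:two-sided-speed}.
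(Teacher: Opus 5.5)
Your plan is essentially the paper's proof. It uses the centered-logarithm path with the Hiai--Petz lower bound for the subcritical lower bound, and Proposition~\ref{prop:fixed} (built on Proposition~\ref{prop:uniform-estimate}) for the small-ball upper bound. For \eqref{eq:upper-threshold} it uses the inclusion $\{d_A([I],[U])\ge xD\}\subseteq\{m_2(U)\ge x^2\}$ together with second-moment concentration. Three local slips need fixing; none is serious.

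\textbf{Monotonicity.} Your monotonicity remark is backwards. For an upper bound you enlarge the radius: $B_A([I],xD)\subseteq B_A([I],x'D)$ with $x<x'<\pi/\sqrt3$ close to $\pi/\sqrt3$, so the deficit becomes small. Shrinking to $x'<x$ would make the deficit larger. The paper does exactly this by applying Proposition~\ref{prop:uniform-estimate} with $\Delta_x=\min\{\Delta_0,\pi^2/3-x^2\}$. Its hypothesis is only the inequality $x^2\le\pi^2/3-\Delta$, so you need not redo the Abel estimate at fixed $\Delta$.

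\textbf{The open set.} A set of measures charging no mass near $\pm\pi$ is not weakly open. You do not need one. The function $\vartheta(e^{i\theta})=\theta^2$ with $\vartheta(-1)=\pi^2$ is continuous on $\mathbb T$, so $G_x=\{\nu:\int\vartheta\,d\nu<x^2\}$ is open. The non-centered principal-angle moment dominates the centered one, and this is the paper's choice.

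\textbf{Closedness.} Closedness of $\{\nu:\int\vartheta\,d\nu\ge x^2\}$ follows from the same continuity. Lower semicontinuity would only make sublevel sets closed.

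With these repairs, your LDP-upper-bound route to \eqref{eq:upper-threshold} is valid: the rate function is good and vanishes only at $m_{\mathbb T}$, so its infimum on the closed set is positive. This is a slightly softer variant of the paper's argument. The paper applies the Abel-based Proposition~\ref{prop:CUE-m2} at $\min\{\Delta_x,\Delta_{\rm C}\}$ and treats $x\ge\pi$ separately. Your version gives a non-explicit $c_x^+$ but covers every $x>\pi/\sqrt3$ at once.
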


\begin{corollary}
\label{cor:local-rate}
For $0<x<\pi/\sqrt3$, define the lower and upper logarithmic rates
\begin{equation}\label{eq:geometric-rate-liminf}
 \underline{I}(x)=\liminf_{\substack{D\to\infty\\D=2^n}}
 -\frac1{D^2}\log\mu_D\!\left(B_A([I],xD)\right),
 \qquad
 \overline{I}(x)=\limsup_{\substack{D\to\infty\\D=2^n}}
 -\frac1{D^2}\log\mu_D\!\left(B_A([I],xD)\right).
\end{equation}
Then
\begin{equation}\label{eq:local-rate-coefficient}
 \lim_{x\uparrow\pi/\sqrt3}
 \frac{\underline I(x)}{(\pi^2/3-x^2)^2}
 =
 \lim_{x\uparrow\pi/\sqrt3}
 \frac{\overline I(x)}{(\pi^2/3-x^2)^2}
 =\frac1{16\zeta(3)}.
\end{equation}
\end{corollary}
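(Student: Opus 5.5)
We prove the two limits in \eqref{eq:local-rate-coefficient} by sandwiching $\underline I(x)\le\overline I(x)$ between a lower bound on $\underline I$ and an upper bound on $\overline I$, with both bounds asymptotic to $(\pi^2/3-x^2)^2/(16\zeta(3))$ as $x\uparrow\pi/\sqrt3$. Write $\Delta=\pi^2/3-x^2$.

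\emph{Upper bound on $\overline I$.} The logarithmic path shows that $d_A([I],[U])\le D\,(\tfrac1D\sum_j\theta_j^2-\bar\theta^2)^{1/2}\le D(\tfrac1D\sum_j\theta_j^2)^{1/2}$, where $\theta_j$ are the principal eigenangles of a representative $U$. Hence
\[
 \mu_D\bigl(B_A([I],xD)\bigr)\ge\Prob\Bigl\{\tfrac1D\textstyle\sum_j\theta_j^2<x^2\Bigr\}.
\]
By the Hiai--Petz large-deviation principle at speed $D^2$, the $\liminf$ of $D^{-2}\log$ of the right side is at least $-\inf\{\Sigma(\nu):\int\theta^2d\nu<x^2\}$, where $\Sigma(\nu)=-\iint\log|e^{i\theta}-e^{i\phi}|\,d\nu\,d\nu$ is the logarithmic energy relative to uniform measure, which has value $0$. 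The near-threshold behaviour of this constrained minimum comes from a quadratic expansion. Write $\nu=(1+h)\,d\theta/2\pi$ with $h=\sum_{k\neq0}\hat h_ke^{ik\theta}$. Then $\Sigma(\nu)=\sum_{k\ge1}|\hat h_k|^2/k$, and the constraint is linear:
\[
 \int\theta^2\,h\,\frac{d\theta}{2\pi}=-\Delta,\qquad\text{where } \theta^2=\frac{\pi^2}{3}+\sum_{k\neq0}\frac{2(-1)^k}{k^2}e^{ik\theta}.
\]
Minimising $\sum_{k\ge1}|\hat h_k|^2/k$ subject to $\sum_{k\ge1}4(-1)^k\,\mathrm{Re}\,\hat h_k/k^2=-\Delta$ by Cauchy--Schwarz gives the minimum
\[
 \frac{\Delta^2}{16\sum_{k\ge1}k\cdot k^{-4}}=\frac{\Delta^2}{16\zeta(3)}.
\]
Higher-order corrections are $O(\Delta^3)$. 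Therefore $\limsup_{x\uparrow}\overline I(x)/\Delta^2\le1/(16\zeta(3))$. Two checks are needed here: positivity of the optimiser $\nu$ for small $\Delta$, which holds since $h$ is a bounded multiple of $\Delta$, and the fact that the event is open in the weak topology up to the boundary effect at $\pm\pi$, handled by perturbing slightly.

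\emph{Lower bound on $\underline I$.} This is the main obstacle. We use the small-ball upper bound established in Section~\ref{sec:small-ball} via the Jacobi-determinant comparison and Weyl/Abel reduction. It should give
\[
 \log\mu_D\bigl(B_A([I],xD)\bigr)\le -D^2\,\inf\Bigl\{\Sigma(\nu):\nu\text{ with lifted phase range constrained},\ \int\theta^2d\nu\le x^2\Bigr\}+o(D^2),
\]
where the Jacobi error $C\Delta^{-2}MD^{3/2}$ is $o(D^2)$ for fixed $x$. The sharp constant requires that the Abel-regularized chord-length functional from Section~\ref{sec:abel-weyl} converges, as $D\to\infty$ and the cutoff $\rho\to0$, to exactly the logarithmic energy $\Sigma$, with no loss of constant. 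We then need the same quadratic minimisation applied to measures that are not necessarily centered. The centering term $\bar\theta^2$ only makes the constraint harder to satisfy, so the infimum is at least the centered one, and it is again $\Delta^2/(16\zeta(3))+O(\Delta^3)$.

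The delicate point is the conjugate-time phase-range restriction. Lifted angles may exceed $(-\pi,\pi]$, so the minimisation must be carried out over measures on an interval of length $2\pi+o(1)$. We must show that this enlargement does not lower the infimum to first order in $\Delta^2$. I would handle this by lower semicontinuity and a compactness argument on $\Sigma$: any measure near uniform with phase range slightly above $2\pi$ projects onto the circle, and the projection only decreases the second moment of the principal angles in a controlled way. Combining the two bounds yields $\liminf\underline I/\Delta^2\ge1/(16\zeta(3))$, which together with $\underline I\le\overline I$ gives \eqref{eq:local-rate-coefficient}.
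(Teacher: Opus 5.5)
Your two-sided architecture is the paper's: the Hiai--Petz open-set lower bound plus the logarithmic path for $\overline I$, and the Jacobi/Weyl/Abel chain for $\underline I$. However, there is a false step on the $\overline I$ side and a missing quantitative step on the $\underline I$ side.

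\textbf{The positivity claim for $\overline I$ is false.} The exact Cauchy--Schwarz optimiser has $\hat h_k=\frac{\Delta}{4\zeta(3)}\frac{(-1)^{k+1}}{k}$, that is, $h(\theta)=\frac{\Delta}{2\zeta(3)}\log|2\cos(\theta/2)|$. This tends to $-\infty$ as $\theta\to\pm\pi$. So $h$ is not a bounded multiple of $\Delta$, and $(1+h)\,d\theta/2\pi$ is negative near $\pm\pi$ for every $\Delta>0$. By the equality case of Cauchy--Schwarz, no probability measure with $\int\vartheta\,d\nu\le\pi^2/3-\Delta$ attains the value $\Delta^2/(16\zeta(3))$. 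The energy is exactly quadratic and the constraint exactly linear, so there are no $O(\Delta^3)$ corrections; positivity is the only obstruction.

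The statement needs only asymptotic attainment, but that requires a truncation balancing two effects. The level $K$ must tend to infinity, so that $Z_K=\sum_{k\le K}k^{-3}\to\zeta(3)$. Yet one must keep $\delta H_K\le 2Z_K$ with $H_K=\sum_{k\le K}k^{-1}$, which is exactly nonnegativity of the truncated density at $\theta=\pi$. This is the role of $K_\delta=\lfloor e^{1/\delta}\rfloor$ in \eqref{eq:CUE-trial-density}, whose energy is $\delta^2/(16Z_K)$. Taking $\delta=(1+\varepsilon)\Delta_x$ then places the trial measure strictly inside the open set $G_x$.

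\textbf{The sharp constant for $\underline I$ needs a vanishing cutoff you have not set up.} The error $C\Delta^{-2}MD^{3/2}$ you quote belongs to the fixed cutoff $\rho\asymp\Delta^2$ of Proposition~\ref{prop:uniform-estimate}. At that cutoff Lemma~\ref{lem:abel} yields only an unspecified $c_1$: the majorant $K_\rho(t)\le r^{-1/2}|1-re^{it}|$ costs $\binom D2 h$, and with $h=c_0\Delta^2$ this is of the same order as the main term.

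You need a $D$-dependent cutoff with $h_D\to0$ but $\rho_D\asymp h_D\gg MD^{-1/2}$. Then the Abel loss $\binom D2 h_D$ and the Jacobi error $\rho_D^{-1}MD^{3/2}$ are simultaneously $o(D^2)$. The paper takes $h_D=M^{1/2}D^{-1/4}$ and rechecks the low-weight removal, transfer, and bad-Gaussian-event terms in Proposition~\ref{prop:vanishing-regularization}.

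\textbf{The phase-range point you call delicate is not.} The sum $p_k=\sum_j e^{ik\theta_j}$ depends only on $e^{i\theta_j}$, and the lifted range enters only through the majorant for $|t|\le L_\star$. Since $\frac1D\sum_i(y\lambda_i)^2=y^2\le x^2$ and $|\Arg(e^{i\phi})|\le|\phi|$, every retained configuration has principal-angle second moment at most $x^2$. Cauchy--Schwarz applied to $\sum_k r_D^k|p_k|^2/k$ and the Fourier series of $\phi^2$, using $\sum_k(1-r_D^{k/2})/k^2\to0$, then gives the sharp bound pointwise at finite $D$ (Lemma~\ref{lem:abel-sharp-coefficient}). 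No limiting measures, centering argument, or compactness is needed. Your compactness route would also close, since $\mathcal I_r$ is weakly continuous for fixed $r<1$ and increasing in $r$, but it is a detour.
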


\begin{corollary}\label{cor:circuit}
Set
\begin{equation}\label{eq:epsilon-star}
 \epsilon_*=2\sin\frac{\pi}{2\sqrt3}.
\end{equation}
Fix $0\le\epsilon<\epsilon_*$.  For every sufficiently large power of two
$D=2^n$, with probability at least
\begin{equation}\label{eq:circuit-probability}
 1-2\exp\{-c_{\rm sh}M^{1/2}D^{7/4}\},
\end{equation}
every no-ancilla circuit built from arbitrary two-qubit gates and free
one-qubit gates whose endpoint $[V]$ satisfies
$\delta_{\op}([U],[V])\le\epsilon$ must contain at least
\begin{equation}\label{eq:circuit-main}
 \left(
 \frac1{\sqrt3}
 -\frac2\pi\arcsin\frac\epsilon2
 -C_{\rm sh}\eta_D
 \right)D-\sqrt n
\end{equation}
two-qubit gates.
\end{corollary}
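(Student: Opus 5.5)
Corollary~\ref{cor:circuit} reduces to Theorem~\ref{thm:sharp} once two deterministic inequalities are in place: a circuit-length bound and an $\epsilon$-perturbation bound for $d_A$. On the event of Theorem~\ref{thm:sharp}, which has probability at least \eqref{eq:circuit-probability}, we have $d_A([I],[U])\ge(\pi/\sqrt3-C_{\rm sh}\eta_D)D$. Everything else holds for every $[U]$.

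\emph{Step 1: circuit cost.} Take a circuit with $g$ two-qubit gates and arbitrary one-qubit gates. Each gate acts as $[W]$ with $W\in U(4)$ (or $U(2)$) tensored with the identity, so $W=e^{-iH}$ with $H$ supported on at most two qubits. Choosing the principal logarithm gives $\|H\|_{\op}\le\pi$, and after removing the trace, $H$ lies in $P\mathfrak g$. The path $t\mapsto e^{-itH}$, right-translated, has $A$-length $\|H\|_0\le\|H\|_{\op}\le\pi$, since $A=1$ on $P\mathfrak g$. So each two-qubit gate costs at most $\pi$.

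One-qubit gates are not free in $d_A$, so we group them. Between consecutive two-qubit gates, a product of one-qubit gates is a tensor product $\bigotimes_j u_j$ with $u_j\in U(2)$. We write it as $\exp(-i\sum_j h_j)$ with each $h_j$ traceless after phase removal and $\|h_j\|_{\op}\le\pi/2$. Since the $h_j$ act on distinct qubits they are orthogonal in $\langle\cdot,\cdot\rangle_0$, giving cost at most $(\sum_j\|h_j\|_0^2)^{1/2}\le(\pi/2)\sqrt n$. This is $O(\sqrt n)$ per layer, which is too large if paid between every pair of two-qubit gates.

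The fix is to absorb each layer into the adjacent two-qubit gate. One-qubit gates on the qubits of a two-qubit gate merge into it, and one-qubit gates on other qubits commute past it. Pushing all one-qubit gates to the end therefore leaves $g$ two-qubit gates followed by a single layer. This gives
\[
d_A([I],[V])\le \pi g+\tfrac{\pi}{2}\sqrt n .
\]
I expect this bookkeeping of one-qubit costs to be the main place where constants must be checked. The $-\sqrt n$ in \eqref{eq:circuit-main} suggests the layer bound should come out as $\pi\sqrt n$, with the lower bound then divided by $\pi$.

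\emph{Step 2: perturbation.} Suppose $\delta_{\op}([U],[V])\le\epsilon<\epsilon_*\le2$. Pick a phase so that $\|U-e^{i\phi}V\|_{\op}\le\epsilon$. Then $V^{-1}U$, up to phase, is a unitary whose eigenvalues $e^{i\theta_j}$ satisfy $|e^{i\theta_j}-1|\le\epsilon$, that is, $|\theta_j|\le2\arcsin(\epsilon/2)$. The centered-log path in the introduction then has $A$-length at most $D\max_j|\theta_j|\le2D\arcsin(\epsilon/2)$, because all metric coefficients are at most $D^2$. By right invariance,
\[
d_A([V],[U])\le 2D\arcsin\frac\epsilon2 .
\]
The condition $\epsilon<\epsilon_*$ is exactly what makes $2\arcsin(\epsilon/2)<\pi/\sqrt3$, so the resulting bound is nontrivial.

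\emph{Step 3: combine.} On the good event, the triangle inequality gives
\[
\pi g+\pi\sqrt n\ge d_A([I],[V])\ge (\pi/\sqrt3-C_{\rm sh}\eta_D)D-2D\arcsin\frac\epsilon2 .
\]
Dividing by $\pi$ yields \eqref{eq:circuit-main}, after renaming the constant as $C_{\rm sh}$ absorbing the factor $1/\pi$; one may enlarge $C_{\rm sh}$ in Theorem~\ref{thm:sharp} if needed. The main obstacle is not analytic: it is keeping the constant accounting consistent with the exact form of \eqref{eq:circuit-main}, which requires the gate cost $\pi$ per two-qubit gate and a single $O(\sqrt n)$ one-qubit layer.
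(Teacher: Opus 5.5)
Your proposal is correct and follows the paper's own argument. It combines the operator-norm bridge $d_A([U],[V])\le 2D\arcsin(\epsilon/2)$ via a centered principal logarithm (Proposition~\ref{prop:projective-bridge}), the absorption of one-qubit gates into the two-qubit gates leaving one terminal local layer (Lemma~\ref{lem:gate-lengths}), the triangle inequality, and the lower tail of Theorem~\ref{thm:sharp} divided by $\pi$. Three small remarks: right invariance actually gives $d_A([V],[U])=d_A([I],[UV^{-1}])$ rather than $[V^{-1}U]$, which is harmless because the two are conjugate and your length bound depends only on the spectrum (the paper avoids this by using the path $[Ue^{itK}]$ directly); no renaming of $C_{\rm sh}$ is needed, since dividing by $\pi$ gives $C_{\rm sh}\eta_D/\pi\le C_{\rm sh}\eta_D$; and your $(\pi/2)\sqrt n$ bound for the local layer is slightly sharper than the paper's $\pi\sqrt n$.
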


\subsection*{Conventions}
All tangent spaces, determinants, and Jacobians in the geometric argument are
understood over the underlying real Hilbert spaces.  Complexification is used
only to diagonalize commutator operators; every unordered root pair
$\{i,j\}$ corresponds to a real two-plane and therefore contributes each root
singular value with multiplicity two.

Unless a subscript specifies otherwise, $\Prob$ and $\E$ refer to the
probability law stated in the surrounding argument.  We write
$X_D=O_{\Prob}(a_D)$ when $X_D/a_D$ is bounded in probability and
$X_D=o_{\Prob}(a_D)$ when $X_D/a_D\to0$ in probability.  The letters
$c,C>0$ denote constants that may change from line to line; subscripts record
permitted parameter dependence.  The usual meanings of $O$, $o$, $\Omega$,
$\Theta$, and $\asymp$ are used with the same convention.

We use $\mathbb F$ for either $\mathbb R$ or $\mathbb C$.  If the domain
of a linear map $L$ has dimension $k$, let
$s_1(L),\ldots,s_k(L)$ be its singular values, including zeros, and write
\[
 \|L\|_{\op}=\max_j s_j(L),\qquad
 \|L\|_{\HS}=\left(\sum_j s_j(L)^2\right)^{1/2},\qquad
 \|L\|_{\Sone}=\sum_j s_j(L).
\]
We also write $\sym L=(L+L^*)/2$.  A second subscript on an operator norm
indicates the Hilbert-space norm used in its definition; for example,
$\|T\|_{\op,0}$ is computed from $\|\cdot\|_0$.  For
$X,Y\in\mathfrak g$ and a group element $g$, our adjoint conventions are
\[
 \ad_X(Y)=[X,Y],\qquad \Ad_g(X)=gXg^{-1}.
\]
Let $\mathbb T=\{z\in\mathbb C:|z|=1\}$.  For Hermitian $H$,
$\diam\spec(H)$ denotes the largest eigenvalue minus the smallest, and
$\Arg z\in(-\pi,\pi]$ denotes the principal argument of $z\in\mathbb T$.
We write $\mathbf1_E$ for the indicator of an event or set $E$, and
$\zeta$ denotes the Riemann zeta function.  For vectors $x,y$ in a Hilbert
space, $x\otimes y^*$ denotes the rank-one operator
$v\mapsto x\langle y,v\rangle$.

If $E$ is a $k$-dimensional real or complex Hilbert space, $\mathcal H$ is
another Hilbert space, and $L:E\to\mathcal H$ is linear, write
\begin{equation}\label{eq:k-volume}
 \vol_k(L)=\det(L^*L)^{1/2}=\prod_{j=1}^k s_j(L).
\end{equation}
For a square map this is $|\det L|$.  For $\delta>0$ define the regularized
$k$-volume
\begin{equation}\label{eq:regularized-volume-definition}
 \vol_{k,\delta}(L)=\prod_{j=1}^k\max\{s_j(L),\delta\}.
\end{equation}
When $L$ is square we also write $\det_\delta L$ for this quantity.  At a
radial time $t=sD$ and for a relative cutoff $0<\rho\le1$, set
\begin{equation}\label{eq:relative-regularized-determinant}
 \det_{\rho,s}L:=\prod_{j=1}^k\max\{s_j(L),\rho s\}.
\end{equation}

\section{Geodesic equations and Jacobi fields}\label{sec:jacobi-dynamics}

\subsection{Conjugate times and spectral bounds}

We first prove the conjugate-time upper estimate of
Le Brigant--Lichtenfelz--Preston \cite[Corollary~3.5]{LBLP} in the present
right-invariant convention.  The argument uses the index form, as in
\cite[Lemma~3.1]{LBLP}; we derive that form directly from the energy.

\begin{proposition}\label{prop:conjugate-upper}
Let $g(t)$ be a nonconstant geodesic for the metric defined by $A$, starting
at the identity with velocity $u_0$.  Put $m_0=Au_0$ and
$\omega=\|\ad_{m_0}\|_{\op,0}$.  Then $\omega>0$, and the first conjugate
time $\tau_1$ satisfies
\begin{equation}\label{eq:conjugate-upper}
 \tau_1\le\frac{2\pi D^2}{\omega}.
\end{equation}
\end{proposition}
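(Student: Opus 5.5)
Our plan is to follow the index-form argument of \cite[Lemma~3.1]{LBLP}: write the second variation of energy along $g$ in the right-invariant trivialization, then exhibit an explicit variation field on $[0,T]$ with $T=2\pi D^2/\omega$ whose index form is nonpositive. By the Morse index theorem this forces a conjugate point in $(0,T]$.

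We first check that $\omega>0$. Since $u_0\neq0$ and $A$ is positive definite, $m_0=Au_0\neq0$ is a nonzero traceless skew-Hermitian matrix. As $\mathfrak{pu}(D)$ has trivial center, $\ad_{m_0}\neq0$.

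Next we compute the energy. Write a variation as $g_\varepsilon(t)$ with right-trivialized velocity $u_\varepsilon=\dot g_\varepsilon g_\varepsilon^{-1}$, and the variation field as $\xi=(\partial_\varepsilon g_\varepsilon)g^{-1}$ at $\varepsilon=0$. We have $E=\frac12\int_0^T\langle u,Au\rangle_0\,dt$. The standard zero-curvature identity $\partial_\varepsilon u=\dot\xi+[\xi,u]$ (up to a sign fixed by the convention $\Ad_g X=gXg^{-1}$) gives
\[
E''=\int_0^T\Bigl(\|\dot\xi+[\xi,u]\|_A^2+\langle m,\partial_\varepsilon^2u\rangle_0\Bigr)dt .
\]
Here $m=Au$ is the momentum, and the geodesic equation reads $\dot m=\pm[m,u]$, which the paper will have stated in some form. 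For fields vanishing at the endpoints, we integrate by parts using the identity for $\partial_\varepsilon^2u$, namely $\ddot{}$-type terms plus $[\eta,u]$ plus $[\xi,\partial_\varepsilon u]$. The geodesic equation then removes the second-order variation term, and we obtain
\[
I(\xi,\xi)=\int_0^T\Bigl(\|\dot\xi+[\xi,u]\|_A^2-\langle m,[\xi,\dot\xi]\rangle_0\Bigr)dt ,
\]
modulo convention signs. We expect this careful bookkeeping of signs in the right-invariant convention to be the main obstacle.

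To exploit the conserved momentum, we pass to the body frame along $g$. Put $\xi=\Ad_{g}\zeta$, or the inverse depending on convention, chosen so that $\dot\xi+[\xi,u]=\Ad_g\dot\zeta$. Coadjoint conservation says $\Ad_g^*m(t)=m_0$. Because $\Ad_g$ is an isometry of $\langle\cdot,\cdot\rangle_0$, this yields
\[
I=\int_0^T\Bigl(\|\Ad_g\dot\zeta\|_A^2-\langle m_0,[\zeta,\dot\zeta]\rangle_0\Bigr)dt .
\]
We then bound the kinetic term crudely. Since $A\le D^2 I$ (that is, $Q=D^2$), we get $\|\Ad_g\dot\zeta\|_A^2\le D^2\|\dot\zeta\|_0^2$.

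For the symplectic term we choose a test field adapted to $\ad_{m_0}$. This operator is skew-adjoint for $\langle\cdot,\cdot\rangle_0$ with norm $\omega$, so there are orthonormal $e_1,e_2$ with $\ad_{m_0}e_1=\omega e_2$ and $\ad_{m_0}e_2=-\omega e_1$. Note that $\langle m_0,[\zeta,\dot\zeta]\rangle_0=\pm\langle\ad_{m_0}\zeta,\dot\zeta\rangle_0$ by ad-invariance of the trace form. Take
\[
\zeta(t)=\sin(\pi t/T)\bigl(\cos(\pi t/T)e_1\pm\sin(\pi t/T)e_2\bigr),
\]
or more simply $\zeta=a(t)e_1+b(t)e_2$ with $a=\sin(2\pi t/T)$ and $b=1-\cos(2\pi t/T)$, so that $\zeta(0)=\zeta(T)=0$. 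With the sign chosen so the term is negative, we have $\langle\ad_{m_0}\zeta,\dot\zeta\rangle_0=\omega(a\dot b-b\dot a)$. For these choices $a\dot b-b\dot a=\frac{2\pi}{T}(1-\cos(2\pi t/T))$, which integrates to $2\pi$, while $\int_0^T\|\dot\zeta\|_0^2\,dt=\frac{8\pi^2}{T}$. Therefore
\[
I\le\frac{8\pi^2D^2}{T}-2\pi\omega .
\]
This is $\le0$ exactly when $T\ge4\pi D^2/\omega$. To reach $2\pi D^2/\omega$ we instead optimize over a half-circle profile: take $\zeta=\sin(\pi t/T)\,(\cos(\pi t/T)e_1+\sin(\pi t/T)e_2)$, or equivalently minimize $D^2\int|\dot z|^2-\omega\,\mathrm{Im}\int\bar z\dot z$ over complex $z$ vanishing at the ends. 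This is the Landau-level Rayleigh quotient, and it is nonpositive once $T\ge2\pi D^2/\omega$ (take $z=e^{i\omega t/(2D^2)}\sin(\pi t/T)$ and check the quadratic form).

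Finally, a nonpositive index form on a nonzero field vanishing at both ends implies $\tau_1\le T$ by the Morse index theorem. This gives \eqref{eq:conjugate-upper}.
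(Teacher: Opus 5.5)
Your plan is the paper's argument: the body-frame index form, the bound $A\le D^2I$, and a test field rotating in a top rotation plane of $\ad_{m_0}$ at angular speed $\omega/(2D^2)$ under the envelope $\sin(\pi t/T)$, followed by the index lemma. Your final field $z=e^{i\omega t/(2D^2)}\sin(\pi t/T)$ is, up to orientation, the paper's $v=f(t)e^{-tB/(2D^2)}e_1$ with $B=\ad_{m_0}$. Using nonpositivity at $T=2\pi D^2/\omega$, rather than the paper's strict negativity for $T>2\pi D^2/\omega$, is also fine.

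Two slips should be corrected, although neither affects the conclusion.

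First, for a fixed-endpoint variation the spatial second variation is
\[
 \int_0^T\Bigl(\|\dot\xi+[\xi,u]\|_A^2+\langle m,[\xi,\dot\xi+[\xi,u]]\rangle_0\Bigr)\,dt .
\]
The term $\langle m,[\xi,[\xi,u]]\rangle_0$ that you dropped is not a sign-convention issue; it is generally nonzero. Only with this term does the substitution $\dot\xi+[\xi,u]=\Ad_g\dot\zeta$ produce $\|\Ad_g\dot\zeta\|_A^2+\langle m_0,[\zeta,\dot\zeta]\rangle_0$. The paper avoids this bookkeeping by expanding $u_\varepsilon$ directly for the variation $g_\varepsilon=ge^{\varepsilon v}$.

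Second, for your full-circle field, $\int_0^T\|\dot\zeta\|_0^2\,dt=4\pi^2/T$, not $8\pi^2/T$. That field therefore already gives $I\le 4\pi^2D^2/T-2\pi\omega\le0$ for $T\ge 2\pi D^2/\omega$. Your ``half-circle'' field $\sin(\pi t/T)\bigl(\cos(\pi t/T)e_1+\sin(\pi t/T)e_2\bigr)$ is exactly one half of the full-circle field, so switching to it could not have changed the threshold.
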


\begin{proof}
Calculations may be made in a lift of $g$ to $SU(D)$; the quotient map to
$\PU(D)$ is a local isometry for the lifted metric.  Write
$u=\dot g g^{-1}$ and $m=Au$.  For a smooth Lie-algebra-valued function
$v$ with $v(0)=v(T)=0$, consider the fixed-endpoint variation
\[
 g_\varepsilon(t)=g(t)e^{\varepsilon v(t)}.
\]
The right-trivialized velocity of this variation has the expansion
\begin{equation}\label{eq:variation-velocity-expansion}
 u_\varepsilon
 =u+\Ad_g\left(\varepsilon v'
       +\frac{\varepsilon^2}{2}[v,v']\right)+O(\varepsilon^3).
\end{equation}
Indeed,
\[
 (\partial_t e^{\varepsilon v})e^{-\varepsilon v}
 =\varepsilon\int_0^1 e^{r\varepsilon v}v'e^{-r\varepsilon v}\,dr,
\]
which gives \eqref{eq:variation-velocity-expansion} upon expansion.
The energy is
\[
 E(g_\varepsilon)=\frac12\int_0^T
 \langle u_\varepsilon,Au_\varepsilon\rangle_0\,dt.
\]
Its first variation vanishes because $g$ is a geodesic.  Since conjugation is
orthogonal for $\langle\cdot,\cdot\rangle_0$, this gives
\[
 0=\int_0^T\langle\Ad_{g^{-1}}m,v'\rangle_0\,dt
 \qquad\text{for every such }v.
\]
Integration by parts shows that $\Ad_{g^{-1}}m=m_0$ is constant.
Differentiating this identity also gives $m'=[u,m]$.

Let $Y=\partial_\varepsilon g_\varepsilon|_{\varepsilon=0}=gv$ be the
variation field, and denote its index form by $\mathcal I_g(Y,Y)$.
The second-variation formula and \eqref{eq:variation-velocity-expansion} give
\begin{align}
 \mathcal I_g(Y,Y)
 &=\left.\frac{d^2}{d\varepsilon^2}E(g_\varepsilon)
   \right|_{\varepsilon=0}\notag\\
 &=\int_0^T\left(
   \langle\Ad_gv',A\Ad_gv'\rangle_0
   +\langle m_0,[v,v']\rangle_0\right)dt.
 \label{eq:right-index-form}
\end{align}
Set $B=\ad_{m_0}$.  Ad-invariance of the reference inner product implies
$B^*=-B$ and
$\langle m_0,[v,v']\rangle_0=\langle Bv,v'\rangle_0$.
Since $A\le D^2I$, \eqref{eq:right-index-form} yields
\begin{equation}\label{eq:index-comparison}
 \mathcal I_g(Y,Y)
 \le\int_0^T\left(D^2\|v'\|_0^2+\langle Bv,v'\rangle_0\right)dt.
\end{equation}

The Lie algebra $\mathfrak{pu}(D)$ has zero center, so $m_0\ne0$ implies
$\omega>0$.  A real skew-adjoint operator has an orthogonal decomposition
into rotation planes and its kernel.  On a plane realizing its operator
norm, choose an orthonormal pair $e_1,e_2$ such that
\[
 Be_1=\omega e_2,\qquad Be_2=-\omega e_1.
\]
For $T>2\pi D^2/\omega$, take
\[
 f(t)=\sin\frac{\pi t}{T},\qquad
 w(t)=e^{-tB/(2D^2)}e_1,\qquad v(t)=f(t)w(t).
\]
This is a nonzero fixed-endpoint variation.  On the chosen plane,
$\|w\|_0=1$, $w'=-Bw/(2D^2)$, and $\|Bw\|_0=\omega$, so
\[
 \|v'\|_0^2=(f')^2+\frac{\omega^2}{4D^4}f^2,
 \qquad
 \langle Bv,v'\rangle_0=-\frac{\omega^2}{2D^2}f^2.
\]
Substitution in \eqref{eq:index-comparison} therefore gives
\begin{equation}\label{eq:index-test-bound}
 \mathcal I_g(Y,Y)
 \le D^2\int_0^T(f')^2\,dt
       -\frac{\omega^2}{4D^2}\int_0^T f^2\,dt
 =\frac{D^2\pi^2}{2T}-\frac{\omega^2T}{8D^2}<0.
\end{equation}
By the index theorem, a negative fixed-endpoint index form implies a
conjugate point in $(0,T)$ \cite{Chavel}.  Since this holds for every
$T>2\pi D^2/\omega$, the first conjugate time satisfies
\eqref{eq:conjugate-upper}.
\end{proof}

Inversion carries the right-invariant metric to the left-invariant metric
with the same inner product at the identity and changes $u_0$ to $-u_0$.
It preserves conjugate times, while
$\|\ad_{-Au_0}\|_{\op,0}=\|\ad_{Au_0}\|_{\op,0}$, in agreement with the
left-invariant statement in \cite{LBLP}.
For an initial velocity $u_0$ of $A$-norm one, let $c(u_0)$ be the cut time: the
supremum of the times up to which its geodesic minimizes distance from the
identity.  The standard cut-time inequality is $c(u_0)\le\tau_1$
\cite{Chavel}.

\begin{proposition}\label{prop:phase-range}
Let $u_0$ have $A$-norm one, $m_0=Au_0$, and
\begin{equation}\label{eq:H}
 H=-\frac{i}{D}m_0.
\end{equation}
If $s>0$ and $sD<c(u_0)$, then
\begin{equation}\label{eq:phase-range}
 s\,\diam\spec(H)<2\pi.
\end{equation}
\end{proposition}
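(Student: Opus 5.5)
The plan is to chain three inequalities already available, $sD<c(u_0)\le\tau_1\le 2\pi D^2/\omega$, and then compute $\omega=\|\ad_{m_0}\|_{\op,0}$ explicitly in terms of the spectrum of $H$.

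First, the geodesic is nonconstant. Since $\|u_0\|_A=1$, we have $u_0\ne0$ and hence $m_0=Au_0\ne0$, because $A$ is invertible. Proposition~\ref{prop:conjugate-upper} therefore applies and gives $\omega>0$ and $\tau_1\le 2\pi D^2/\omega$. The hypothesis $sD<c(u_0)$ and the standard cut-time inequality $c(u_0)\le\tau_1$ then give
\[
 sD<c(u_0)\le\tau_1\le\frac{2\pi D^2}{\omega}.
\]
The first inequality is strict, and this strictness is what yields the strict conclusion \eqref{eq:phase-range}.

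The main computation is the identity $\omega=D\,\diam\spec(H)$. Because $m_0$ is traceless skew-Hermitian, \eqref{eq:H} makes $H$ traceless Hermitian with $m_0=iDH$. Conjugating by a unitary that diagonalizes $H$ preserves $\langle\cdot,\cdot\rangle_0$ and $\mathfrak g$, so we may assume $H=\diag(\lambda_1,\ldots,\lambda_D)$.

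We then evaluate $\ad_{m_0}$ on the natural real basis of $\mathfrak g$.
\begin{itemize}
\item It vanishes on the diagonal traceless part.
\item For $i\ne j$, it acts on the real two-plane spanned by $E_{ij}-E_{ji}$ and $i(E_{ij}+E_{ji})$ as a rotation generator with rotation rate $D|\lambda_i-\lambda_j|$. This follows from $[iDH,E_{ij}]=iD(\lambda_i-\lambda_j)E_{ij}$.
\end{itemize}
These planes are mutually $\langle\cdot,\cdot\rangle_0$-orthogonal and together with the diagonal part span $\mathfrak g$. Since $\ad_{m_0}$ is skew-adjoint, its operator norm is the largest rotation rate, namely
\[
 \omega=\max_{i,j}D|\lambda_i-\lambda_j|=D\,\diam\spec(H).
\]
This value does not depend on the normalization factor $D^{-1}$ in \eqref{eq:hs}.

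Substituting into the chain gives $sD<2\pi D^2/(D\,\diam\spec(H))$, that is, $s\,\diam\spec(H)<2\pi$. The quantity $\diam\spec(H)$ is positive because $\omega>0$, so the division is legitimate.

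I do not expect a real obstacle. The only point needing care is the projective setting: $\ad_{m_0}$ must be computed on $\mathfrak{pu}(D)$ rather than $\mathfrak u(D)$. This does not affect the result, because the maximizing off-diagonal planes are traceless and lie in $\mathfrak g$, and the excluded center lies in the kernel of $\ad_{m_0}$.
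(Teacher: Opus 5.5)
Your proposal is correct and follows the paper's proof essentially verbatim. You chain $sD<c(u_0)\le\tau_1\le 2\pi D^2/\omega$ using Proposition~\ref{prop:conjugate-upper} and the cut-time inequality, and you get $\|\ad_{m_0}\|_{\op,0}=D\,\diam\spec(H)$ from the root-plane computation $\ad_{m_0}E_{ij}=iD(h_i-h_j)E_{ij}$; your extra checks (that $m_0\ne0$, so the geodesic is nonconstant, and that strictness comes from $sD<c(u_0)$) are details the paper leaves implicit.
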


\begin{proof}
If $h_1,\ldots,h_D$ are the eigenvalues of $H$, then, after diagonalization
and complexification,
\[
 \ad_{m_0}E_{ij}=iD(h_i-h_j)E_{ij}.
\]
On the underlying real root planes this is a rotation, and consequently
\[
 \|\ad_{m_0}\|_{\op,0}=D\diam\spec(H).
\]
Proposition~\ref{prop:conjugate-upper} and the cut-time inequality give
\[
 sD<c(u_0)\le\tau_1
 \le\frac{2\pi D^2}{D\diam\spec(H)}.
\]
Division by $D$ proves the claim.
\end{proof}

\subsection{The Euler--Jacobi equations}

Along a unit-speed geodesic $g(t)$ with $g(0)=I$, write the
right-trivialized velocity and momentum as
\[
 u=\dot g g^{-1},\qquad m=Au.
\]
For the right-invariant metric, the Euler equation is
\begin{equation}\label{eq:euler}
 m'=[u,m].
\end{equation}
Let $z=\delta g\,g^{-1}$ and $\eta=\delta m$.  Differentiating gives
\begin{equation}\label{eq:linear-body}
 z'=[u,z]+A^{-1}\eta,
 \qquad
 \eta'=[A^{-1}\eta,m]+[u,\eta].
\end{equation}
Let $O(t)=\Ad_{g(t)^{-1}}$.  Then $m_0=O(t)m(t)$ is constant.  Setting
$\widetilde z=Oz$ and $\widetilde\eta=O\eta$ in \eqref{eq:linear-body} yields
\begin{equation}\label{eq:spatial}
 \widetilde z'=G(t)\widetilde\eta,
 \qquad
 \widetilde\eta'=D_0G(t)\widetilde\eta,
 \qquad
 D_0=-\ad_{m_0},
\end{equation}
where
\begin{equation}\label{eq:movingG}
 G(t)=O(t)A^{-1}O(t)^{-1}=P(t)+qP(t)^\perp,
 \qquad P(t)=O(t)PO(t)^{-1}.
\end{equation}
We now drop tildes.

The moving low-weight projection satisfies the following trace-norm bounds.

\begin{lemma}\label{lem:P-bounds}
For every unit-speed geodesic,
\begin{equation}\label{eq:Pprime}
 \|P'(t)\|_{\Sone}\le4M,
 \qquad
 \|P(t)^\perp D_0P(t)\|_{\Sone}\le2MD.
\end{equation}
\end{lemma}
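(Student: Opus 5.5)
The plan is to bound each trace norm by the rank of $P(t)$ times a Hilbert--Schmidt norm. The commutators are then evaluated on the rotated Pauli basis, where unitarity of the Pauli matrices gives dimension-free bounds. We use two elementary facts. First, if $L$ has rank at most $M$, then $\|L\|_{\Sone}\le\sqrt M\,\|L\|_{\HS}$. Second, for a unitary $W$ and any $X\in\mathfrak g$,
\[
 \|[X,W]\|_0\le\|XW\|_0+\|WX\|_0=2\|X\|_0,
\]
since left or right multiplication by a unitary preserves the normalized Hilbert--Schmidt norm.

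\emph{First bound.} Differentiating $O(t)=\Ad_{g(t)^{-1}}$ and using $(g^{-1})'=-g^{-1}u$, we get
\[
 O'(t)X=-g^{-1}[u,X]g=-O(t)\ad_u X,
\]
so $O'O^{-1}=-\ad_{\widetilde u}$ with $\widetilde u=O(t)u(t)$. Hence
\[
 P'(t)=[O'O^{-1},P(t)]=-[K,P(t)],\qquad K=\ad_{\widetilde u}.
\]
Because $K$ is skew-adjoint for $\langle\cdot,\cdot\rangle_0$, we have $P(t)K=-(KP(t))^*$. It follows that $\|P'(t)\|_{\Sone}\le2\|KP(t)\|_{\Sone}\le2\sqrt M\,\|KP(t)\|_{\HS}$.

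An orthonormal basis of the range of $P(t)$ is $\{O(t)(iW_\alpha)\}$ with $1\le|\alpha|\le2$. Each $O(t)(iW_\alpha)=i\,g^{-1}W_\alpha g$ is $i$ times a unitary. The basis elements lying in the range of $P(t)^\perp$ are annihilated by $KP(t)$, so
\[
 \|KP(t)\|_{\HS}^2=\sum_{1\le|\alpha|\le2}\|[\widetilde u,g^{-1}W_\alpha g]\|_0^2\le4M\|u\|_0^2 .
\]
Unit speed means $\|Pu\|_0^2+D^2\|P^\perp u\|_0^2=1$, which forces $\|u\|_0\le1$. Therefore $\|KP(t)\|_{\HS}\le2\sqrt M$, and so $\|P'(t)\|_{\Sone}\le4M$.

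\emph{Second bound.} The operator $P^\perp D_0P$ has rank at most $M$ and $P^\perp$ is a contraction, so
\[
 \|P^\perp D_0P\|_{\Sone}\le\sqrt M\,\|\ad_{m_0}P(t)\|_{\HS}.
\]
The same basis computation gives $\|\ad_{m_0}P(t)\|_{\HS}\le2\sqrt M\,\|m_0\|_0$. It remains to bound $\|m_0\|_0=\|Au_0\|_0$. We have
\[
 \|Au_0\|_0^2=\|Pu_0\|_0^2+D^4\|P^\perp u_0\|_0^2\le D^2\bigl(\|Pu_0\|_0^2+D^2\|P^\perp u_0\|_0^2\bigr)=D^2,
\]
so $\|m_0\|_0\le D$. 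This yields $\|P^\perp D_0P\|_{\Sone}\le2MD$.

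The only step needing care is the first: one must avoid bounding $\|\ad_{\widetilde u}\|_{\op,0}$ directly. That route passes through the matrix operator norm of $u$ and loses factors of $\sqrt M$ or $\sqrt D$. Routing everything through Hilbert--Schmidt norms on the rotated Pauli basis is what keeps the constants at $4M$ and $2MD$.
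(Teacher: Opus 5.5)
Your proposal is correct and follows essentially the same route as the paper. Both arguments bound the trace norm by rank times the Hilbert--Schmidt norm, computed on the conjugated Pauli basis, and both use $\|[v,W]\|_0\le2\|v\|_0$ for unitary $W$ together with $\|m_0\|_0\le D$. The only difference is cosmetic: for $P'$ you write $P'=-KP-(KP)^*$ and use the triangle inequality with $\rank(KP)\le M$. The paper instead uses the block off-diagonal structure of $P'$, with $\rank P'\le2M$ and $\|P'\|_{\HS}^2=2\|P^\perp P'P\|_{\HS}^2$. Both versions give exactly $4M$.
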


\begin{proof}
Let $e_1(0),\ldots,e_M(0)$ be the normalized Pauli basis of the low-weight
subspace and set
\[
 e_a(t)=O(t)e_a(0),\qquad
 P(t)=\sum_{a=1}^M e_a(t)\otimes e_a(t)^*.
\]
Conjugation preserves both the normalized Hilbert--Schmidt norm and the matrix
operator norm, so $\|e_a(t)\|_{\op}=1$.  If
$v(t)=\Ad_{g(t)^{-1}}u(t)$, then differentiating
$e_a(t)=g(t)^{-1}e_a(0)g(t)$ gives
\[
 e_a'(t)=[e_a(t),v(t)].
\]
Since $A\ge I$ and the geodesic has unit speed,
$\|v(t)\|_0=\|u(t)\|_0\le1$.  Hence
\begin{equation}\label{eq:moving-basis-column-bound}
 \sum_{a=1}^M\|e_a'(t)\|_0^2
 \le\sum_{a=1}^M\|[v(t),e_a(t)]\|_0^2
 \le4M.
\end{equation}
Differentiating $P(t)e_a(t)=e_a(t)$ shows
$P'(t)e_a(t)=P(t)^\perp e_a'(t)$.  Moreover, differentiating $P^2=P$ gives
$PP'P=P^\perp P'P^\perp=0$, so $P'$ is off diagonal relative to
$P\oplus P^\perp$.  Therefore
\[
 \|P'(t)\|_{\HS}^2
 =2\|P(t)^\perp P'(t)P(t)\|_{\HS}^2
 =2\sum_{a=1}^M\|P(t)^\perp e_a'(t)\|_0^2
 \le8M.
\]
The range of $P'$ is contained in the sum of the ranges of
$P^\perp P'P$ and its adjoint, so $\rank P'\le2M$.  The
rank--Hilbert--Schmidt inequality now gives
\[
 \|P'(t)\|_{\Sone}
 \le\sqrt{2M}\,\|P'(t)\|_{\HS}\le4M.
\]

For the second estimate, the columns of $P^\perp D_0P$ in the basis
$e_1(t),\ldots,e_M(t)$ satisfy
\[
 \sum_{a=1}^M\|P(t)^\perp D_0e_a(t)\|_0^2
 \le\sum_{a=1}^M\|[m_0,e_a(t)]\|_0^2
 \le4M\|m_0\|_0^2.
\]
Unit speed implies
$1=\langle u_0,Au_0\rangle_0$ and hence
$\|m_0\|_0^2=\|Au_0\|_0^2\le Q\langle u_0,Au_0\rangle_0=Q=D^2$.
Thus
\[
 \|P^\perp D_0P\|_{\HS}\le2D\sqrt M,
 \qquad \rank(P^\perp D_0P)\le M,
\]
and another application of the rank--Hilbert--Schmidt inequality yields
$\|P^\perp D_0P\|_{\Sone}\le2MD$.
\end{proof}

Let
\[
 T(t)=G(t)^{1/4}=P(t)+aP(t)^\perp,
 \qquad a=q^{1/4}=D^{-1/2},
\]
and rescale the Jacobi variables by
\begin{equation}\label{eq:quarter-vars}
 \zeta=T^{-1}z,
 \qquad
 \xi=T\eta.
\end{equation}
The augmented system is
\begin{equation}\label{eq:aug-actual}
 \frac d{dt}\binom\zeta\xi=
 \begin{pmatrix}
 -T^{-1}T'&G^{1/2}\\
 0&T'T^{-1}+TD_0GT^{-1}
 \end{pmatrix}
 \binom\zeta\xi.
\end{equation}

For a fixed $X\in S_0^{N-1}$, let the corresponding unit-speed geodesic
have initial velocity $u_0=A^{-1/2}X$.  The coefficients in
\eqref{eq:aug-actual} depend on this geodesic.  Given
$v\in\mathfrak g$, let $(\zeta_v,\xi_v)$ be the solution of
\eqref{eq:aug-actual} with
\begin{equation}\label{eq:actual-Jacobi-initial-data}
 \zeta_v(0)=0,\qquad \xi_v(0)=v,
\end{equation}
and define the Jacobi map in these variables by
\begin{equation}\label{eq:actual-Jacobi-definition}
 \widehat J(t,X)v:=\zeta_v(t).
\end{equation}
We suppress $X$ from the notation.

Relative to $P(t)\oplus P(t)^\perp$, set
$C_P(t)=P(t)^\perp P'(t)P(t)$ and write
\[
 D_0=\begin{pmatrix}D_{11}&D_{12}\\-D_{12}^*&D_{22}\end{pmatrix},
 \qquad
 P'=\begin{pmatrix}0&C_P^*\\C_P&0\end{pmatrix}.
\]
Direct multiplication gives
\begin{equation}\label{eq:quarter-block}
 TD_0GT^{-1}=
 \begin{pmatrix}
 D_{11}&a^3D_{12}\\-aD_{12}^*&qD_{22}
 \end{pmatrix},
 \qquad
 T'T^{-1}=
 \begin{pmatrix}
 0&(a^{-1}-1)C_P^*\\(1-a)C_P&0
 \end{pmatrix}.
\end{equation}
Let
\begin{equation}\label{eq:K}
 K=PD_0P+qP^\perp D_0P^\perp+[P',P].
\end{equation}
Then $K^*=-K$.  Compare \eqref{eq:aug-actual} with
\begin{equation}\label{eq:aug-ref}
 \mathcal K(t)=
 \begin{pmatrix}0&q^{1/2}I\\0&K(t)\end{pmatrix}.
\end{equation}

\begin{proposition}\label{prop:quarter-action}
Let $\mathcal E$ be the difference between the generators in
\eqref{eq:aug-actual} and \eqref{eq:aug-ref}.  Then
\begin{equation}\label{eq:Epoint}
 \|\mathcal E(t)\|_{\Sone}
 \le C\left[
 q^{1/4}\|P^\perp D_0P\|_{\Sone}
 +q^{-1/4}\|P'\|_{\Sone}+M
 \right].
\end{equation}
Consequently, for every fixed $x>0$, uniformly for $0<s\le x$,
\begin{equation}\label{eq:Eint}
 \int_0^{sD}\|\mathcal E(t)\|_{\Sone}\,dt
 \le C_xMD^{3/2}s.
\end{equation}
\end{proposition}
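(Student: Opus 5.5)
The plan is to compute $\mathcal E(t)$ block by block relative to the splitting $(\zeta,\xi)$, and within each block relative to $P(t)\oplus P(t)^\perp$. Then we bound each piece in trace norm by the triangle inequality. The key simplification is that $T=aI+(1-a)P$, so $T'=(1-a)P'$, and $P'$ is off-diagonal with blocks $C_P$ and $C_P^*$. Also $\|C_P\|_{\Sone}\le\|P'\|_{\Sone}$, and $D_{12}=PD_0P^\perp=-(P^\perp D_0P)^*$ because $D_0^*=-D_0$. Hence $\|D_{12}\|_{\Sone}=\|D_{12}^*\|_{\Sone}=\|P^\perp D_0P\|_{\Sone}$. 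Finally, the trace norm of a block operator is at most the sum of the trace norms of its blocks.

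\emph{Step 1: the three nonzero blocks of $\mathcal E$.} The lower-left block vanishes in both generators. For the $(1,1)$ block we have $-T^{-1}T'$, where $T^{-1}=P+a^{-1}P^\perp$. Hence
\[
T^{-1}T'=(1-a)\begin{pmatrix}0&C_P^*\\a^{-1}C_P&0\end{pmatrix},
\]
whose trace norm is at most $(1+a^{-1})\|P'\|_{\Sone}\le2q^{-1/4}\|P'\|_{\Sone}$. For the $(1,2)$ block we have $G^{1/2}-q^{1/2}I=(1-q^{1/2})P$, with trace norm at most $M$.

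For the $(2,2)$ block we use \eqref{eq:quarter-block} together with
\[
[P',P]=\begin{pmatrix}0&-C_P^*\\C_P&0\end{pmatrix}.
\]
The diagonal parts cancel exactly against $PD_0P+qP^\perp D_0P^\perp$. What remains is:
\begin{itemize}
\item the upper off-diagonal block $(a^{-1}-2)C_P^*+a^3D_{12}$;
\item the lower off-diagonal block $-aC_P-aD_{12}^*$.
\end{itemize}
Their trace norms are bounded by $(a^{-1}+a)\|P'\|_{\Sone}+(a^3+a)\|P^\perp D_0P\|_{\Sone}$. Since $a=q^{1/4}\le1$, summing all contributions gives \eqref{eq:Epoint} with an absolute constant $C$.

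\emph{Step 2: integration.} Insert Lemma~\ref{lem:P-bounds}, which gives $\|P'\|_{\Sone}\le4M$ and $\|P^\perp D_0P\|_{\Sone}\le2MD$. With $q^{1/4}=D^{-1/2}$, the pointwise bound becomes
\[
C\left(2MD^{1/2}+4MD^{1/2}+M\right)\le C'MD^{1/2}.
\]
Integrating over $[0,sD]$ yields $C'MD^{3/2}s$. This holds uniformly in $s$, so in fact $C_x$ can be taken absolute.

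There is no real obstacle here. The one step needing care is the bookkeeping of the $(2,2)$ off-diagonal blocks. One must check that the $a^{-1}C_P^*$ term, coming from $T'T^{-1}$, is not cancelled by $[P',P]$, and that it is only of size $q^{-1/4}\|P'\|_{\Sone}$. This is exactly the term that the quarter-power rescaling was designed to balance against $q^{1/4}\|P^\perp D_0P\|_{\Sone}$.
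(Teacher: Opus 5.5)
Your proposal is correct and follows the paper's proof essentially step for step. Both compute $\mathcal E$ block by block using $T=aI+(1-a)P$ and the off-diagonal structure of $P'$, isolate the same three nonzero blocks, and then insert Lemma~\ref{lem:P-bounds} with $a=D^{-1/2}$ before integrating.

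There is one small arithmetic slip. Because $[P',P]$ is subtracted, the upper off-diagonal entry of the $(2,2)$ error is $a^3D_{12}+a^{-1}C_P^*$, as in \eqref{eq:quarter-lower-right-error}, not $(a^{-1}-2)C_P^*+a^3D_{12}$. Your stated bound $(a^{-1}+a)\|P'\|_{\Sone}+(a^3+a)\|P^\perp D_0P\|_{\Sone}$ is correct for the right block, so nothing downstream changes.
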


\begin{proof}
Write $a=q^{1/4}$.  Since $T=aI+(1-a)P$, one has
\[
 T^{-1}T'=
 \begin{pmatrix}
 0&(1-a)C_P^*\\
 (a^{-1}-1)C_P&0
 \end{pmatrix},
\]
and hence
\begin{equation}\label{eq:quarter-upper-left-bound}
 \|T^{-1}T'\|_{\Sone}
 \le C a^{-1}\|P'\|_{\Sone}.
\end{equation}
The upper-right difference between the actual and reference generators is
\[
 G^{1/2}-q^{1/2}I=(1-q^{1/2})P,
\]
which has rank $M$ and trace norm at most $M$.

It remains to estimate the lower-right block.  Relative to
$P\oplus P^\perp$,
\[
 [P',P]=
 \begin{pmatrix}0&-C_P^*\\C_P&0\end{pmatrix},
 \qquad
 K=\begin{pmatrix}D_{11}&-C_P^*\\C_P&qD_{22}\end{pmatrix}.
\]
Subtracting $K$ from the lower-right block in \eqref{eq:aug-actual} and using
\eqref{eq:quarter-block} gives the exact error
\begin{equation}\label{eq:quarter-lower-right-error}
 \mathcal E_{22}=
 \begin{pmatrix}
 0&a^3D_{12}+a^{-1}C_P^*\\
 -aD_{12}^*-aC_P&0
 \end{pmatrix}.
\end{equation}
Because $0<a\le1$, the ideal property of the trace norm implies
\begin{equation}\label{eq:quarter-lower-right-bound}
 \|\mathcal E_{22}\|_{\Sone}
 \le C\left(
 a\|P^\perp D_0P\|_{\Sone}
 +a^{-1}\|P'\|_{\Sone}
 \right).
\end{equation}
Combining \eqref{eq:quarter-upper-left-bound},
\eqref{eq:quarter-lower-right-bound}, and the upper-right rank bound proves
\eqref{eq:Epoint}.

Finally, Lemma~\ref{lem:P-bounds} and $a=D^{-1/2}$ give the pointwise estimate
\[
 \|\mathcal E(t)\|_{\Sone}
 \le C\bigl(MD^{1/2}+M\bigr).
\]
Integration over $0\le t\le sD$ yields
\[
 \int_0^{sD}\|\mathcal E(t)\|_{\Sone}\,dt
 \le C_xMD^{3/2}s,
\]
which is \eqref{eq:Eint}.
\end{proof}

The propagator of \eqref{eq:aug-ref} is
\begin{equation}\label{eq:Mref}
 \mathcal M_K(t)=
 \begin{pmatrix}
 I&q^{1/2}\int_0^tW(r)\,dr\\
 0&W(t)
 \end{pmatrix},
\end{equation}
where $W'=KW$, $W(0)=I$; since $K^*=-K$, the propagator $W$ is unitary.
For $t\le xD$, both $\mathcal M_K(t)$ and its
inverse are bounded by a constant depending only on $x$.

\subsection{Comparison with a constant-coefficient system}

Let $R_K$ solve
\begin{equation}\label{eq:Kato-transport}
 R_K'=[P',P]R_K,\qquad R_K(0)=I.
\end{equation}
The generator $[P',P]$ is skew-adjoint, so $R_K$ is unitary.  Moreover,
using $P'=PP'+P'P$ and $P P'P=0$,
\[
 \frac d{dt}(R_K^*P(t)R_K)
 =R_K^*\bigl(P'+[P,[P',P]]\bigr)R_K=0.
\]
Consequently,
\begin{equation}\label{eq:Kato-fixed-projection}
 R_K(t)^*P(t)R_K(t)=P_0:=P(0).
\end{equation}
This is the standard Kato transport of the moving low-weight subspace
\cite{Kato}.

With $W$ as above, put
\[
 \widehat W=R_K^*W,\qquad
 \widehat D=R_K^*D_0R_K.
\]
Equations \eqref{eq:K} and \eqref{eq:Kato-fixed-projection} give
\begin{equation}\label{eq:Khat}
 \widehat W'=\widehat K\widehat W,
 \qquad
 \widehat K=P_0\widehat DP_0
 +qP_0^\perp\widehat DP_0^\perp.
\end{equation}
Let $\overline W(0)=I$ solve
\begin{equation}\label{eq:Kbar}
 \overline W'=\overline K\overline W,
 \qquad
 \overline K=qP_0\widehat DP_0
 +qP_0^\perp\widehat DP_0^\perp.
\end{equation}
Both generators are block diagonal for the fixed splitting
$P_0\mathfrak g\oplus P_0^\perp\mathfrak g$, and their restrictions to
$P_0^\perp\mathfrak g$ coincide.  Uniqueness for the two block equations
therefore gives
\begin{equation}\label{eq:cheap-block-rank}
 \rank(\widehat W(r)-\overline W(r))\le M,
 \qquad
 \|\widehat W(r)-\overline W(r)\|_{\Sone}\le2M.
\end{equation}

The propagator $e^{qD_0r}$, expressed in the same frame, is
\[
 \widehat U(r)=R_K(r)^*e^{qD_0r}.
\]
Its generator is
\[
 q\widehat D-R_K^*[P',P]R_K.
\]
The off-diagonal blocks of $q\widehat D$ are absent from $\overline K$, and
unitary invariance of the trace norm gives
\begin{equation}\label{eq:phase-generator-tail}
 \left\|\overline K-
 \bigl(q\widehat D-R_K^*[P',P]R_K\bigr)\right\|_{\Sone}
 \le2q\|P^\perp D_0P\|_{\Sone}+\|P'\|_{\Sone}.
\end{equation}
Since both propagators are unitary, Duhamel's formula and
Lemma~\ref{lem:P-bounds} imply, at every time $r$,
\begin{equation}\label{eq:Kbar-frozen-Duhamel}
 \|\overline W(r)-\widehat U(r)\|_{\Sone}
 \le CM\left(r+\frac rD\right).
\end{equation}
Combining \eqref{eq:cheap-block-rank} and
\eqref{eq:Kbar-frozen-Duhamel}, and conjugating back by $R_K(r)$, yields
\begin{equation}\label{eq:W-frozen-prefix}
 \|W(r)-e^{qD_0r}\|_{\Sone}
 \le2M+CM\left(r+\frac rD\right).
\end{equation}

\begin{proposition}\label{prop:frozen-tail}
Let
\[
 \widehat J_K(t)=q^{1/2}\int_0^tW(r)\,dr,
 \qquad
 \widehat J_0(t)=q^{1/2}\int_0^te^{qD_0r}\,dr.
\]
For every fixed $x>0$, uniformly for $0<s\le x$,
\begin{equation}\label{eq:Jtail}
 \|\widehat J_K(sD)-\widehat J_0(sD)\|_{\Sone}
 \le C_x\bigl(Ms+MDs^2\bigr).
\end{equation}
\end{proposition}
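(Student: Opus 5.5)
We integrate the pointwise bound \eqref{eq:W-frozen-prefix} in time. By definition,
\[
 \widehat J_K(sD)-\widehat J_0(sD)=q^{1/2}\int_0^{sD}\bigl(W(r)-e^{qD_0r}\bigr)\,dr .
\]
The trace norm is a norm, so the triangle inequality for Bochner integrals applies:
\[
 \|\widehat J_K(sD)-\widehat J_0(sD)\|_{\Sone}
 \le q^{1/2}\int_0^{sD}\|W(r)-e^{qD_0r}\|_{\Sone}\,dr .
\]
Substituting \eqref{eq:W-frozen-prefix} and using $q^{1/2}=D^{-1}$ gives
\[
 \|\widehat J_K(sD)-\widehat J_0(sD)\|_{\Sone}
 \le D^{-1}\int_0^{sD}\Bigl(2M+CM\bigl(r+\tfrac rD\bigr)\Bigr)dr .
\]

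We evaluate the three terms separately. The constant term gives $D^{-1}\cdot 2M\cdot sD=2Ms$. The linear term gives $D^{-1}\cdot CM(sD)^2/2=\tfrac C2 MDs^2$. The last term gives $D^{-1}\cdot CM(sD)^2/(2D)=\tfrac C2 Ms^2$. Since $s\le x$, we have $Ms^2\le xMs$, so the last term is absorbed into $C_xMs$. Collecting the three contributions yields \eqref{eq:Jtail} with a constant $C_x$ depending only on $x$, uniformly for $0<s\le x$.

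The only point needing care is the justification of \eqref{eq:W-frozen-prefix} itself, which the paper states just before the proposition. It requires two checks.

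First, the Duhamel step \eqref{eq:Kbar-frozen-Duhamel}. For unitary propagators $U_1,U_2$ with generators $K_1,K_2$, we have $U_1(r)-U_2(r)=\int_0^r U_1(r)U_1(\sigma)^{-1}(K_1-K_2)(\sigma)U_2(\sigma)\,d\sigma$. Hence $\|U_1(r)-U_2(r)\|_{\Sone}\le\int_0^r\|K_1-K_2\|_{\Sone}$. The generator gap \eqref{eq:phase-generator-tail} is bounded using Lemma~\ref{lem:P-bounds}:
\[
 2q\|P^\perp D_0P\|_{\Sone}+\|P'\|_{\Sone}\le 4MD^{-1}+4M .
\]
Integrating over $[0,r]$ gives the bound $CM(r+r/D)$.

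Second, the block-rank bound \eqref{eq:cheap-block-rank}. The difference $\widehat W-\overline W$ is supported on the $M$-dimensional block $P_0\mathfrak g$ and is a difference of two unitaries there. Its trace norm is therefore at most $2M$.

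Conjugation by the unitary $R_K$ preserves trace norms, so combining the two checks gives \eqref{eq:W-frozen-prefix}.

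With that in hand, the proposition is the elementary integration above. The role of the term $MDs^2$ is that it is of order $MD$ at $s\asymp1$, which stays well below the $MD^{3/2}$ scale of Proposition~\ref{prop:quarter-action}.
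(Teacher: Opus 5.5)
Your proposal is correct and follows the paper's proof. Like the paper, you integrate \eqref{eq:W-frozen-prefix} over $[0,sD]$ with the prefactor $q^{1/2}=D^{-1}$ and absorb the $Ms^2$ term using $s\le x$. Your additional checks of the Duhamel estimate and of the block-rank bound behind \eqref{eq:W-frozen-prefix} are also sound.
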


\begin{proof}
Since $q^{1/2}=D^{-1}$, integration of
\eqref{eq:W-frozen-prefix} gives
\begin{align*}
 \|\widehat J_K(sD)-\widehat J_0(sD)\|_{\Sone}
 &\le \frac1D\int_0^{sD}
 \left[2M+CM\left(r+\frac rD\right)\right]dr\\
 &\le C_x\bigl(Ms+MDs^2\bigr).
\end{align*}
\end{proof}

\section{Jacobian estimates and polar integration}\label{sec:regularized-polar}

\subsection{A determinant comparison}

We now pass from estimates for the augmented propagators to estimates for
the determinants of their Jacobi blocks.  Regularizing the singular values
allows the comparison block to be singular.  We first record an estimate
for isometric embeddings.

\begin{lemma}\label{lem:principal}
Let $Q_1,Q_0:\mathbb F^k\to\mathcal H$ be isometric embeddings, and let
$\Pi_1,
\Pi_0$ be the orthogonal projections onto their ranges.  There is an
orthogonal or unitary map $V$ on $\mathbb F^k$ such that
\begin{equation}\label{eq:principal}
 \|Q_1V-Q_0\|_{\Sone}\le\|\Pi_1-\Pi_0\|_{\Sone}.
\end{equation}
\end{lemma}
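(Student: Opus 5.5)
The plan is to reduce everything to principal angles between the two ranges via a singular value decomposition of $Q_1^*Q_0$, and then compare the two trace norms one plane at a time.

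\emph{Choice of $V$.} Write $Q_1^*Q_0=U\Sigma W^*$ with $U,W$ orthogonal or unitary on $\mathbb F^k$. Since $\|Q_1^*Q_0\|_{\op}\le1$, we may write $\Sigma=\diag(\cos\theta_1,\ldots,\cos\theta_k)$ with $\theta_j\in[0,\pi/2]$; these are the principal angles. Set $V=UW^*$. Then
\[
 Q_1V-Q_0=(Q_1U-Q_0W)W^*,
\]
and multiplying on the right by the unitary $W^*$ does not change the trace norm. Put $f_j=Q_1Ue_j$ and $g_j=Q_0We_j$. Both families are orthonormal, because $Q_1U$ and $Q_0W$ are isometries. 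By construction $\langle f_i,g_j\rangle=\cos\theta_j\,\delta_{ij}$, and this inner product is real and nonnegative.

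\emph{Orthogonal two-planes.} Let $E_j=\operatorname{span}\{f_j,g_j\}$. The relations above show that $E_i\perp E_j$ for $i\ne j$: the pairs $f_i,f_j$ and $g_i,g_j$ are orthogonal by orthonormality, and $\langle f_i,g_j\rangle=0$ for $i\ne j$ by the diagonal form of $\Sigma$. Each $E_j$ has dimension at most two, and it collapses to a line exactly when $\theta_j=0$.

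\emph{Left-hand side.} The operator $Q_1U-Q_0W$ sends $e_j$ to $f_j-g_j\in E_j$. These image vectors are mutually orthogonal and satisfy
\[
 \|f_j-g_j\|^2=2-2\cos\theta_j=4\sin^2(\theta_j/2).
\]
Hence its singular values are $2\sin(\theta_j/2)$, and the left-hand side of \eqref{eq:principal} equals $\sum_j 2\sin(\theta_j/2)$.

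\emph{Right-hand side.} We have $\Pi_1=\sum_j f_j\otimes f_j^*$ and $\Pi_0=\sum_j g_j\otimes g_j^*$. Therefore
\[
 \Pi_1-\Pi_0=\bigoplus_j\bigl(f_j\otimes f_j^*-g_j\otimes g_j^*\bigr),
\]
an orthogonal direct sum over the planes $E_j$, vanishing on $(\bigoplus_j E_j)^\perp$. On $E_j$, the difference of two rank-one projections onto unit vectors at angle $\theta_j$ is self-adjoint and traceless. Its eigenvalues are $\pm\sin\theta_j$, since the determinant of the $2\times2$ block is $-(1-\cos^2\theta_j)$. So its trace norm is $2\sin\theta_j$, and $\|\Pi_1-\Pi_0\|_{\Sone}=\sum_j2\sin\theta_j$.

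\emph{Conclusion.} It remains to note that for $\theta\in[0,\pi/2]$,
\[
 \sin\theta=2\sin(\theta/2)\cos(\theta/2)\ge\sqrt2\,\sin(\theta/2)\ge\sin(\theta/2).
\]
Summing over $j$ gives \eqref{eq:principal}.

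The only delicate step is the mutual orthogonality of the planes $E_j$. In the complex case one must also check that the phases are absorbed correctly, which holds because the SVD makes each $\langle f_j,g_j\rangle$ real and nonnegative. Degenerate angles $\theta_j=0$ contribute zero to both sides, so they cause no difficulty.
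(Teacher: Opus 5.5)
Your proof is correct and is essentially the paper's argument. The paper also works with principal-vector bases; you build them explicitly from an SVD of $Q_1^*Q_0$ with $V=UW^*$. It then computes the singular values $2\sin(\theta_j/2)$ of $Q_1V-Q_0$ and the nonzero singular values $\sin\theta_j$, each of multiplicity two, of $\Pi_1-\Pi_0$, and concludes from $\sin(\theta/2)\le\sin\theta$ on $[0,\pi/2]$. Your version supplies details the paper leaves implicit, namely the mutual orthogonality of the planes $E_j$ and the $2\times2$ block computation.
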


\begin{proof}
Choose principal-vector bases for the two ranges, with principal angles
$\theta_j\in[0,\pi/2]$, and let $V$ identify the corresponding domain
vectors.  The singular values of $Q_1V-Q_0$ are
$2\sin(\theta_j/2)$.  The nonzero singular values of
$\Pi_1-\Pi_0$ are $\sin\theta_j$, each with multiplicity two.  Therefore
\[
 \|Q_1V-Q_0\|_{\Sone}
 =2\sum_j\sin(\theta_j/2)
 \le2\sum_j\sin\theta_j
 =\|\Pi_1-\Pi_0\|_{\Sone}.
\]
\end{proof}

\begin{theorem}\label{thm:output}
Fix $T>0$ and let $\mathcal H$ be a finite-dimensional real or complex
Hilbert space.  Let $\mathcal M(t)=\mathcal M_0(t)R(t)$ on
$0\le t\le T$, where
\begin{equation}\label{eq:R-equation}
 R'=\widetilde E R,\qquad R(0)=I,
\end{equation}
and suppose that, for some $K_0<\infty$,
\begin{equation}\label{eq:reference-bounds}
 \sup_{0\le t\le T}
 \bigl(\|\mathcal M_0(t)\|_{\op}
       +\|\mathcal M_0(t)^{-1}\|_{\op}\bigr)\le K_0.
\end{equation}
Let $\iota:\mathbb F^k\to\mathcal H$ be an isometric embedding and
$C:\mathcal H\to\mathbb F^k$ a bounded linear map.  Put
\begin{equation}\label{eq:three-output-blocks}
 B=C\mathcal M(T)\iota,
 \qquad B_K=C\mathcal M_0(T)\iota,
\end{equation}
and let $B_0:\mathbb F^k\to\mathbb F^k$ satisfy, for some $\ell\ge0$,
\begin{equation}\label{eq:BK-B0}
 \|B_K-B_0\|_{\Sone}\le\ell.
\end{equation}
Define
\begin{equation}\label{eq:eta-definitions}
 \eta=\int_0^T\|\widetilde E(t)\|_{\Sone}\,dt,
 \qquad
 \eta_{\rm sym}=\int_0^T
   \|\sym\widetilde E(t)\|_{\Sone}\,dt.
\end{equation}
Then for every $\delta>0$,
\begin{equation}\label{eq:output}
 \log\vol_k(B)
 \le
 \log\vol_{k,\delta}(B_0)
 +\eta_{\rm sym}
 +C_0\delta^{-1}(\eta+\ell),
\end{equation}
where $C_0$ depends only on $K_0$ and $\|C\|_{\op}$.
\end{theorem}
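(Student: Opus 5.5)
The plan is to factor $B$ into three pieces. Write $\mathcal M(T)\iota=\mathcal M_0(T)R(T)\iota$ and take the polar decomposition
\[
 R(T)\iota=Q_1S,
\]
with $Q_1:\mathbb F^k\to\mathcal H$ an isometric embedding and $S\ge0$ on $\mathbb F^k$. Then $B=C\mathcal M_0(T)Q_1S$, so
\[
 \vol_k(B)=|\det(C\mathcal M_0(T)Q_1)|\cdot\det S,
 \qquad \det S=\vol_k(R(T)\iota).
\]
The $\eta_{\rm sym}$ term will come from $\det S$. The term $C_0\delta^{-1}(\eta+\ell)$ will come from comparing $C\mathcal M_0(T)Q_1$ with $B_0$.

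\emph{Step 1 (volume growth of the frame).} Let $F(t)=R(t)\iota$ and let $\Pi(t)$ be the orthogonal projection onto its range; this range is $k$-dimensional because $R(t)$ is invertible. Differentiating $\log\det(F^*F)^{1/2}$ gives
\[
 \frac{d}{dt}\log\vol_k(F)=\Tr\bigl(\Pi\,\sym\widetilde E\,\Pi\bigr)\le\|\sym\widetilde E\|_{\Sone}.
\]
Integrating from $0$ to $T$ yields $\log\det S\le\eta_{\rm sym}$.

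\emph{Step 2 (motion of the range).} Set $\Pi_0=\iota\iota^*$ and $\Pi_1=\Pi(T)$. Since the range of $F$ is transported by $F'=\widetilde EF$, one has
\[
 \Pi'=\Pi^\perp\widetilde E\Pi+(\Pi^\perp\widetilde E\Pi)^*,
\]
so $\|\Pi_1-\Pi_0\|_{\Sone}\le2\eta$. Lemma~\ref{lem:principal} then gives a unitary or orthogonal $V$ with $\|Q_1V-\iota\|_{\Sone}\le2\eta$. Using \eqref{eq:reference-bounds}, \eqref{eq:BK-B0} and the ideal property of the trace norm,
\[
 C\mathcal M_0(T)Q_1V=B_0+E,\qquad \|E\|_{\Sone}\le2\|C\|_{\op}K_0\,\eta+\ell.
\]
Since $|\det V|=1$, we get $|\det(C\mathcal M_0(T)Q_1)|=|\det(B_0+E)|$.

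\emph{Step 3 (regularized determinant perturbation).} Write the singular value decomposition $B_0=U\Sigma W$ and set $\Sigma_\delta=\max(\Sigma,\delta)$ entrywise. Then
\[
 B_0+E=U\Sigma_\delta\bigl(X+Y\bigr),\qquad X=\Sigma_\delta^{-1}\Sigma W,\quad Y=\Sigma_\delta^{-1}U^*E.
\]
Here $\|X\|_{\op}\le1$, and $\|Y\|_{\Sone}\le\delta^{-1}\|E\|_{\Sone}$ because $\|\Sigma_\delta^{-1}\|_{\op}\le\delta^{-1}$. By Weyl's inequality $s_j(X+Y)\le1+s_j(Y)$, we get $|\det(X+Y)|\le\prod_j(1+s_j(Y))\le e^{\|Y\|_{\Sone}}$. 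Hence
\[
 \log|\det(B_0+E)|\le\log\vol_{k,\delta}(B_0)+\delta^{-1}\|E\|_{\Sone}.
\]
Combining Steps 1--3 gives \eqref{eq:output} with $C_0=\max(2\|C\|_{\op}K_0,1)$.

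The main obstacle is Steps 1--2, because $R$ is not unitary. I must check that the Liouville-type formula sees only the symmetric part of $\widetilde E$ projected to the current range, which is why $\eta_{\rm sym}$ enters without a $\delta^{-1}$ factor. I must also check that the projection equation above holds for a non-orthogonal flow. I would verify this by writing $\Pi=F(F^*F)^{-1}F^*$ and differentiating directly; the $\Pi\widetilde E\Pi$ terms should cancel, leaving only the off-diagonal parts $\Pi^\perp\widetilde E\Pi$ and their adjoint.
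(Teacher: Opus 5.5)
Your proposal is correct and follows the paper's proof essentially step for step: the thin polar factorization of $R(T)\iota$, the Liouville-type formula giving $\log\det S\le\eta_{\rm sym}$, the range-projection derivative combined with Lemma~\ref{lem:principal}, and the trace-norm comparison of $C\mathcal M_0(T)Q_1V$ with $B_0$. The only variation is your last step, where the factorization $B_0+E=U\Sigma_\delta(X+Y)$ and Weyl's inequality $s_j(X+Y)\le 1+s_j(Y)$ replace the paper's Mirsky inequality with the $\delta^{-1}$-Lipschitz bound for $r\mapsto\log\max\{r,\delta\}$. Both routes yield the same estimate $\log|\det(B_0+E)|\le\log\vol_{k,\delta}(B_0)+\delta^{-1}\|E\|_{\Sone}$ and the same constant $C_0$.
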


\begin{proof}
Because $R(t)$ is invertible, $Y(t):=R(t)\iota$ has full column rank.  Write
its thin polar decomposition as
\begin{equation}\label{eq:thin-polar-factorization}
 Y=\mathcal QH,
 \qquad \mathcal Q^*\mathcal Q=I_k,
 \qquad H=(Y^*Y)^{1/2}>0,
\end{equation}
and put $\Pi=\mathcal Q\mathcal Q^*$.  We first record the two differential identities used in
the estimate.

Since $Y'=\widetilde EY$, differentiation of $Y^*Y=H^2$ gives
\[
 \frac d{dt}\log\det H
 =\frac12\Tr\bigl((Y^*Y)^{-1}(Y^*Y)'\bigr)
 =\Re\Tr(\mathcal Q^*\widetilde E\mathcal Q)
 =\Tr(\Pi\sym\widetilde E).
\]
Also, using the identity
$\Pi=Y(Y^*Y)^{-1}Y^*$ and differentiating it directly yields
\begin{equation}\label{eq:range-projection-derivative}
 \Pi'=(I-\Pi)\widetilde E\Pi
       +\Pi\widetilde E^*(I-\Pi).
\end{equation}
Consequently,
\begin{equation}\label{eq:polar-integrated-bounds}
 \log\det H(T)\le\eta_{\rm sym},
 \qquad
 \|\Pi(T)-\Pi(0)\|_{\Sone}\le2\eta.
\end{equation}
Here we used the ideal property of the trace norm for the two off-diagonal
compressions in \eqref{eq:range-projection-derivative}.

At $t=0$ one has $Y(0)=\iota$, so $\Pi(0)=\iota\iota^*$.  By
Lemma~\ref{lem:principal}, there is an orthogonal or unitary $V$ on the input
space such that
\begin{equation}\label{eq:Q-alignment}
 \|\mathcal Q(T)V-\iota\|_{\Sone}\le2\eta.
\end{equation}
Right multiplication by $V$ does not change $k$-volume.  Using
$\mathcal M=\mathcal M_0\mathcal QH$ on the input columns, we obtain
\begin{align}
 \vol_k(B)
 &=\det H(T)\,
   \vol_k\bigl(C\mathcal M_0(T)\mathcal Q(T)V\bigr).
 \label{eq:output-volume-factorization}
\end{align}
Set $L_T=C\mathcal M_0(T)\mathcal Q(T)V$.  By
\eqref{eq:reference-bounds}, \eqref{eq:Q-alignment}, and
\eqref{eq:BK-B0},
\begin{equation}\label{eq:LT-B0-bound}
 \|L_T-B_0\|_{\Sone}
 \le 2\|C\|_{\op}K_0\eta+\ell.
\end{equation}
Mirsky's inequality \cite{Bhatia} gives
\[
 \sum_{j=1}^k|s_j(L_T)-s_j(B_0)|
 \le\|L_T-B_0\|_{\Sone}.
\]
The scalar function $r\mapsto\log\max\{r,\delta\}$ is
$\delta^{-1}$-Lipschitz on $[0,\infty)$.  Therefore
\begin{align*}
 \log\vol_k(L_T)
 &\le\sum_{j=1}^k\log\max\{s_j(L_T),\delta\}\\
 &\le\log\vol_{k,\delta}(B_0)
   +\delta^{-1}\|L_T-B_0\|_{\Sone}.
\end{align*}
Combining this with \eqref{eq:polar-integrated-bounds},
\eqref{eq:output-volume-factorization}, and
\eqref{eq:LT-B0-bound} proves \eqref{eq:output}.
\end{proof}

For the Jacobi system, apply Theorem~\ref{thm:output} with the following
input and output maps.  Let $\mathcal M(t)$ be the propagator of
\eqref{eq:aug-actual}, let $\mathcal M_K(t)$ be the propagator
\eqref{eq:Mref}, and put
\begin{equation}\label{eq:relative-propagator}
 R(t)=\mathcal M_K(t)^{-1}\mathcal M(t).
\end{equation}
Then
\begin{equation}\label{eq:Etilde-application}
 R'=\widetilde E R,
 \qquad
 \widetilde E=\mathcal M_K^{-1}\mathcal E\mathcal M_K.
\end{equation}
For $t\le xD$, \eqref{eq:Mref} and the unitarity of $W$ imply
\begin{equation}\label{eq:MK-uniform-bound}
 \|\mathcal M_K(t)\|_{\op}
 +\|\mathcal M_K(t)^{-1}\|_{\op}\le C_x.
\end{equation}
Indeed,
$q^{1/2}\|\int_0^tW(r)\,dr\|_{\op}\le q^{1/2}t\le x$.

Take the augmented input and output maps
\begin{equation}\label{eq:input-output-identification}
 \iota(v)=\binom{0}{v},
 \qquad
 C\binom{\zeta}{\xi}=\zeta.
\end{equation}
By construction,
\begin{equation}\label{eq:three-Jacobi-blocks}
 C\mathcal M(sD)\iota=\widehat J(sD),
 \quad
 C\mathcal M_K(sD)\iota=\widehat J_K(sD),
 \quad
 B_0=\widehat J_0(sD).
\end{equation}
Thus Proposition~\ref{prop:frozen-tail} supplies the quantity $\ell$ in
\eqref{eq:BK-B0}.

\begin{proposition}\label{prop:actual-frozen}
For every fixed $x>0$, uniformly for $0<s\le x$ and $0<\rho\le1$,
\begin{equation}\label{eq:actual-frozen}
 \log|\det\widehat J(sD)|
 \le
 \log\det_{\rho,s}\widehat J_0(sD)
 +C_x\left(\rho^{-1}MD^{3/2}+MD^{3/2}\right).
\end{equation}
\end{proposition}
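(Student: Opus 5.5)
The plan is to apply Theorem~\ref{thm:output} with $T=sD$, $\mathcal M_0=\mathcal M_K$, $R$ as in \eqref{eq:relative-propagator}, the input and output maps \eqref{eq:input-output-identification}, and the cutoff $\delta=\rho s$. With these choices, $\vol_k(B)=|\det\widehat J(sD)|$ and $\vol_{k,\delta}(B_0)=\det_{\rho,s}\widehat J_0(sD)$ by \eqref{eq:three-Jacobi-blocks} and \eqref{eq:relative-regularized-determinant}. The constant $K_0$ is supplied by \eqref{eq:MK-uniform-bound}, and $\|C\|_{\op}=1$, so $C_0$ depends only on $x$. It then remains to bound $\eta$, $\eta_{\rm sym}$ and $\ell$.

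First, since $\widetilde E=\mathcal M_K^{-1}\mathcal E\mathcal M_K$, the ideal property and \eqref{eq:MK-uniform-bound} give $\|\widetilde E\|_{\Sone}\le C_x\|\mathcal E\|_{\Sone}$. Proposition~\ref{prop:quarter-action} then yields $\eta\le C_xMD^{3/2}s$, and trivially $\eta_{\rm sym}\le\eta\le C_xMD^{3/2}s\le C_xMD^{3/2}$. Second, Proposition~\ref{prop:frozen-tail} gives $\ell\le C_x(Ms+MDs^2)$.

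Substituting into \eqref{eq:output}, the error term is
\[
 C_x(\rho s)^{-1}\bigl(MD^{3/2}s+Ms+MDs^2\bigr)
 =C_x\rho^{-1}\bigl(MD^{3/2}+M+MDs\bigr).
\]
Since $s\le x$ and $D\ge1$, this is at most $C_x\rho^{-1}MD^{3/2}$. The factor $s$ in each of $\eta$ and $\ell$ is what cancels the $s^{-1}$ from $\delta=\rho s$, so the bound is uniform as $s\downarrow0$. Adding the bound on $\eta_{\rm sym}$ gives exactly \eqref{eq:actual-frozen}.

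The main point to check is that Theorem~\ref{thm:output} applies with $B_K$ equal to the reference Jacobi block. That is, $C\mathcal M_K(sD)\iota$ is the upper-right block $q^{1/2}\int_0^{sD}W$ of \eqref{eq:Mref}, which is $\widehat J_K(sD)$. We also need $\mathcal M=\mathcal M_KR$ with $R(0)=I$, which holds because both propagators start at the identity. A secondary subtlety is that the trace norms are computed over the real underlying space. The estimates in Lemma~\ref{lem:P-bounds} only change by absolute constants under this convention, so this does not affect the argument.
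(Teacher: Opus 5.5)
Your proposal is correct and is essentially the paper's own proof: you apply Theorem~\ref{thm:output} with $T=sD$, $\mathcal M_0=\mathcal M_K$ and $\delta=\rho s$. You bound $\eta$, and hence $\eta_{\rm sym}\le\eta$, through the ideal property, \eqref{eq:MK-uniform-bound} and Proposition~\ref{prop:quarter-action}, bound $\ell$ by Proposition~\ref{prop:frozen-tail}, and use the linear vanishing of $\eta$ and $\ell$ in $s$ to cancel $(\rho s)^{-1}$ uniformly as $s\downarrow0$. Your closing remark about real trace norms is unnecessary, because Lemma~\ref{lem:P-bounds} is already stated for real-linear operators on $\mathfrak g$, but it does no harm.
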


\begin{proof}
The ideal property of $\|\cdot\|_{\Sone}$ and
\eqref{eq:MK-uniform-bound} give
\[
 \int_0^{sD}\|\widetilde E(t)\|_{\Sone}\,dt
 \le C_x\int_0^{sD}\|\mathcal E(t)\|_{\Sone}\,dt
 \le C_xMD^{3/2}s
\]
by Proposition~\ref{prop:quarter-action}; the same bound applies to the
symmetric part.  Proposition~\ref{prop:frozen-tail} gives
\[
 \ell\le C_x(Ms+MDs^2).
\]
Apply Theorem~\ref{thm:output} with $T=sD$ and
$\delta=\rho s$.  Since both $\eta$ and $\ell$ vanish at least linearly in
$s$,
\[
 (\rho s)^{-1}(\eta+\ell)
 \le C_x\rho^{-1}MD^{3/2}
\]
uniformly down to $s=0$.  The remaining term
$\eta_{\rm sym}$ is at most $C_xMD^{3/2}$, which proves
\eqref{eq:actual-frozen}.
\end{proof}

\subsection{The polar Jacobian}

To obtain the polar Jacobian from $\widehat J$, we undo the changes of
variables at the initial and final times and divide by the radial singular
value.

For $X\in S_0^{N-1}$, put
\[
 u_0=A^{-1/2}X,
\]
so that $u_0$ has $A$-norm one.  Write
\[
 \Phi(t,X)=\operatorname{Exp}_{[I]}(tA^{-1/2}X)
\]
for the Riemannian exponential map in these Euclidean coordinates.

\begin{proposition}\label{prop:polar-jacobian}
Let $t>0$ and let $\widehat J(t,X)$ be the square map from the initial
rescaled momentum
$\xi_0$ to the rescaled position $\zeta(t)$ in
\eqref{eq:quarter-vars}.  The metric-normalized differential of
$X\mapsto\Phi(t,X)$ on the full $N$-dimensional initial space is
\begin{equation}\label{eq:full-J}
 \mathcal J(t,X)
 =G(t)^{-1/4}\widehat J(t,X)G(0)^{-1/4}.
\end{equation}
Consequently,
\begin{equation}\label{eq:detfull}
 |\det\mathcal J(t,X)|
 =Q^{(N-M)/2}|\det\widehat J(t,X)|.
\end{equation}
If $j_A(t,X)$ denotes the normal polar Jacobian, then
\begin{equation}\label{eq:jq}
 j_A(t,X)
 =Q^{(N-M)/2}\frac{|\det\widehat J(t,X)|}{t}.
\end{equation}
\end{proposition}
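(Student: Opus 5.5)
We trace the chain of linear maps that computes $d\Phi(t,X)$ in metric-orthonormal coordinates, identifying each factor with one of the changes of variables already introduced.

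\emph{Domain side.} A variation $\delta X$ of the initial point in $\mathfrak g$ with the $\|\cdot\|_0$-norm produces the velocity variation $\delta u_0=A^{-1/2}\delta X$. The corresponding momentum variation is $\eta(0)=A\,\delta u_0=A^{1/2}\delta X$. Since $O(0)=I$, the spatial variables agree with the body variables at $t=0$, and $T(0)=G(0)^{1/4}=A^{-1/4}$. Hence $\xi(0)=T(0)\eta(0)=A^{1/4}\delta X=G(0)^{-1/4}\delta X$, and $z(0)=0$ because $g(0)=I$ is fixed. So the initial data of \eqref{eq:actual-Jacobi-initial-data} are $v=G(0)^{-1/4}\delta X$.

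\emph{Target side.} The endpoint variation is $\delta g\,g^{-1}=z(t)$. Its length in the right-invariant $A$-metric is $\|A^{1/2}z\|_0$. Since $O(t)$ is $\|\cdot\|_0$-orthogonal, this equals $\|G(t)^{-1/2}\widetilde z\|_0$, and the map $z\mapsto A^{1/2}z$ is an isometry onto $(\mathfrak g,\|\cdot\|_0)$. Writing $\widetilde z=T\zeta$, the metric-normalized output is $G(t)^{-1/2}G(t)^{1/4}\zeta=G(t)^{-1/4}\zeta(t)$, up to the $\|\cdot\|_0$-orthogonal map $A^{1/2}O(t)^{-1}A^{-1/2}$ relating spatial to body frames. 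That map is harmless for determinants; alternatively we can simply define the normalized differential in the spatial orthonormal frame $G(t)^{-1/2}$. Composing the three factors gives \eqref{eq:full-J}.

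\emph{Determinant and polar Jacobian.} $G(s)^{-1/4}$ equals $1$ on a rank-$M$ subspace and $Q^{1/4}$ on its complement of dimension $N-M$. Its determinant is therefore $Q^{(N-M)/4}$ at both $s=0$ and $s=t$, and multiplicativity gives \eqref{eq:detfull}. For \eqref{eq:jq}, write the Riemannian volume in polar form $d\Vol_A=j_A(t,X)\,dt\,d\sigma(X)$ on $S_0^{N-1}$. The map $(t,X)\mapsto tX$ has Jacobian $t^{N-1}$. By the Gauss lemma, the radial direction $\delta X=X$ is sent by $d\Phi$ to $t\dot\gamma(t)$, a unit-speed vector scaled by $t$, which is $A$-orthogonal to the image of $T_XS_0^{N-1}$. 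We need to be careful here, because $\Phi(t,\cdot)$ is defined with $\delta X$ ranging over all of $\mathfrak g$. In that normalization the radial singular value is $t$. Dividing the full $N\times N$ determinant by this radial factor leaves the tangential $(N-1)$-volume, which is exactly the polar Jacobian with respect to $dt\,d\sigma$.

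The only delicate step is this Gauss-lemma bookkeeping: confirming that the radial input $X$ is the $\|\cdot\|_0$-unit normal to the sphere, and that $\mathcal J$ maps it to an $A$-unit vector times $t$ that is orthogonal to the tangential image. Once this orthogonal splitting is in place, $|\det\mathcal J|=t\cdot j_A$, which yields \eqref{eq:jq}.
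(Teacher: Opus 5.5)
Your proposal is correct and follows the paper's proof essentially step for step: the initial normalization $\xi_0=G(0)^{-1/4}h$, the spatial-frame output normalization $G(t)^{-1/2}z=G(t)^{-1/4}\zeta$, the count $Q^{(N-M)/4}$ for the determinant of each endpoint factor, and the Gauss-lemma removal of the radial singular value $t$. One small slip: the $\|\cdot\|_0$-orthogonal map carrying the spatial normalized output $G(t)^{-1/2}\widetilde z$ to the body normalized output $A^{1/2}z$ is $O(t)^{-1}$, not $A^{1/2}O(t)^{-1}A^{-1/2}$, which is orthogonal only when $O(t)$ commutes with $A$; this does not affect the determinant, and your aside about the $t^{N-1}$ Jacobian of $(t,X)\mapsto tX$ is not needed.
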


\begin{proof}
Let $h\in\mathfrak g$ be a variation of the Euclidean coordinate $X$.  The
initial velocity variation is $A^{-1/2}h$, so the initial momentum variation
is
\[
 \eta_0=A(A^{-1/2}h)=A^{1/2}h=G(0)^{-1/2}h.
\]
Since $\xi=T\eta$ and $T=G^{1/4}$,
\begin{equation}\label{eq:initial-quarter-normalization}
 \xi_0=G(0)^{-1/4}h.
\end{equation}
At the endpoint, the spatial position variation is
$z=G(t)^{1/4}\zeta$.  The right-invariant metric, expressed in the spatial
frame, has inverse inertia $G(t)$; hence the metric-normalized endpoint
coordinate is
\[
 G(t)^{-1/2}z=G(t)^{-1/4}\zeta.
\]
Combining this identity with \eqref{eq:initial-quarter-normalization} gives
\eqref{eq:full-J}.

The projection $P(t)$ is an orthogonal conjugate of $P(0)$, so
\[
 \det G(t)=\det G(0)=q^{N-M}=Q^{-(N-M)}.
\]
Each endpoint factor $G^{-1/4}$ therefore has determinant
$Q^{(N-M)/4}$, which proves \eqref{eq:detfull}.

It remains to remove the radial column.  Varying $X$ in the radial direction
$h=X$ gives
\[
 \Phi(t,(1+\varepsilon)X)
 =\gamma_X((1+\varepsilon)t),
\]
where $\gamma_X$ is the unit-speed geodesic with initial velocity
$A^{-1/2}X$.  Thus the radial Jacobi field is $t\dot\gamma_X(t)$ and has
metric norm $t$.  By the Gauss lemma it is orthogonal to the images of the
angular variations $h\perp X$.  Hence the full $N$-volume is $t$ times the
normal $(N-1)$-volume, proving \eqref{eq:jq}.
\end{proof}

Let $c(X):=c(A^{-1/2}X)$ be the cut time of the corresponding unit-speed
geodesic.  Let $d\varsigma_0$ be the surface measure on $S_0^{N-1}$ induced by
$\langle\cdot,\cdot\rangle_0$, and let
$\omega_N=\pi^{N/2}/\Gamma(N/2+1)$ be the volume of the Euclidean unit ball
in $\mathbb R^N$.  Thus $\varsigma_0(S_0^{N-1})=N\omega_N$.  In the following polar formulas,
$\E$ denotes expectation with respect to normalized surface probability on
$S_0^{N-1}$.  For $R>0$, the standard polar integration formula on the minimizing domain
of the exponential map \cite{Chavel} gives
\begin{equation}\label{eq:polar-before-scaling}
 \Vol_A B_A([I],R)
 =\int_{S_0^{N-1}}
   \int_0^R\mathbf1_{\{t<c(X)\}}j_A(t,X)\,dt\,d\varsigma_0(X).
\end{equation}
The cut locus has Riemannian measure zero, so no additional boundary term is
present.  Substituting \eqref{eq:jq} and setting $R=xD$ and $t=sD$ yields
\begin{equation}\label{eq:polar-unscaled-volume}
 \Vol_A B_A([I],xD)
 =N\omega_NQ^{(N-M)/2}
 \int_0^x\E\left[
  \mathbf1_{\{sD<c(X)\}}|\det\widehat J(sD,X)|
 \right]\frac{ds}{s}.
\end{equation}
On the other hand, the metric tensor at the identity is obtained from
$\langle\cdot,\cdot\rangle_0$ by the positive operator $A$, so
\begin{equation}\label{eq:total-volume-scaling}
 \Vol_A\PU(D)
 =\det(A)^{1/2}\Vol_0\PU(D)
 =Q^{(N-M)/2}\Vol_0\PU(D).
\end{equation}
The inertia determinant cancels exactly, and therefore
\begin{equation}\label{eq:polar-normalized}
 \begin{aligned}
 \mu_D\!\left(B_A([I],xD)\right)
 &=\frac{\Vol_A B_A([I],xD)}{\Vol_A\PU(D)}\\
 &=\frac{N\omega_N}{\Vol_0\PU(D)}
 \int_0^x\E\left[
  \mathbf1_{\{sD<c(X)\}}|\det\widehat J(sD,X)|
 \right]\frac{ds}{s}.
 \end{aligned}
\end{equation}
A right-invariant Riemannian volume on the compact group $\PU(D)$ is a
constant multiple of right Haar measure.  After division by total volume,
\eqref{eq:polar-normalized} is therefore a Haar-probability identity.

The singular values of $\widehat J_0$, counted with their real
multiplicities, are as follows.  Let
\[
 H=-\frac{i}{D}m_0
\]
have eigenvalues $h_1,\ldots,h_D$.  On the $(D-1)$-dimensional real Cartan
subspace, $D_0=-\ad_{m_0}$ vanishes, so
$\widehat J_0(sD)=q^{1/2}sD\,I=sI$.  For every $i<j$, the corresponding
real root plane is invariant under $D_0$ and $D_0$ acts there as a rotation
with angular frequency $D(h_i-h_j)$.  Equivalently, on a complex root vector
with $D_0v=i\omega v$,
\begin{equation}\label{eq:frozen-spectral-output}
 \widehat J_0(sD)v
 =q^{1/2}\frac{e^{iq\omega sD}-1}{iq\omega}v.
\end{equation}
It follows that the root singular value is
\begin{equation}\label{eq:root-sing}
 s\left|\sinc\frac{s(h_i-h_j)}2\right|,
\end{equation}
with real multiplicity two.  In particular,
\begin{equation}\label{eq:regularized-frozen-product}
 \det_{\rho,s}\widehat J_0(sD)
 =s^{D-1}\prod_{i<j}
 \left[
  s\max\left\{
   \left|\sinc\frac{s(h_i-h_j)}2\right|,\rho
  \right\}
 \right]^2.
\end{equation}
Thus the total real multiplicity is
$(D-1)+2\binom D2=D^2-1=N$.

\section{Spherical averages and Weyl integration}\label{sec:abel-weyl}

\subsection{Comparison of spherical averages}

We compare the average over the unit sphere in the high-weight subspace
with an average over the full trace-zero Hermitian sphere, whose measure
is invariant under unitary conjugation.  The comparison uses nonnegative
integrands and includes an additive error from an exceptional Gaussian event.

Write $\mathfrak g=\mathfrak g_{\cL}\oplus\mathfrak g_{\cP}$ for the
low-weight and high-weight subspaces, of dimensions $M$ and $N-M$,
respectively, and set
\[
 \mathfrak h=-i\mathfrak g,\qquad
 \mathfrak h_{\cL}=-i\mathfrak g_{\cL},\qquad
 \mathfrak h_{\cP}=-i\mathfrak g_{\cP}.
\]
Thus $\mathfrak h$ is the real Hilbert space of trace-zero Hermitian matrices
with the normalized Hilbert--Schmidt norm.  The metric-sphere coordinate
$X\in S_0^{N-1}$ decomposes as
\begin{equation}\label{eq:metric-sphere-split}
 X=\sqrt z\,X_{\cL}+\sqrt{1-z}\,X_{\cP},
\end{equation}
where $z,X_{\cL},X_{\cP}$ are independent,
\begin{equation}\label{eq:beta}
 z\sim\operatorname{Beta}\left(\frac M2,\frac{N-M}{2}\right),
\end{equation}
and $X_{\cL},X_{\cP}$ are uniform on the unit spheres of
$\mathfrak g_{\cL}$ and $\mathfrak g_{\cP}$.  Here
$\operatorname{Beta}(a,b)$ has density
$z^{a-1}(1-z)^{b-1}/B(a,b)$ on $(0,1)$, where
$B(a,b)=\Gamma(a)\Gamma(b)/\Gamma(a+b)$.  If
$H_{\cL}=-iX_{\cL}$ and $H_{\cP}=-iX_{\cP}$, then the normalized momentum
from \eqref{eq:H} is
\begin{equation}\label{eq:Hsplit}
 H=\sqrt{1-z}\,H_{\cP}+\frac{\sqrt z}{D}H_{\cL}.
\end{equation}
Every low-weight Hilbert--Schmidt unit vector satisfies
\begin{equation}\label{eq:cheap-op-bound}
 \|H_{\cL}\|_{\op}\le\sqrt M.
\end{equation}
Indeed, expand it in an orthonormal low-weight Pauli basis, use that every basis
matrix has operator norm one, and apply Cauchy--Schwarz to the coefficients.

Set $x_\star=\pi/\sqrt3$.  For a trace-zero Hermitian matrix $K$ with eigenvalues
$\kappa_1,\ldots,\kappa_D$, define
\begin{equation}\label{eq:root-functional-general}
 \mathcal R_{s,\rho}(K)
 :=s^{D-1}\prod_{i<j}
 \left[
  s\max\left\{
   \left|\sinc\frac{s(\kappa_i-\kappa_j)}2\right|,\rho
  \right\}
 \right]^2.
\end{equation}
For a unit vector $K\in\mathfrak h$ and $0<y\le s$, write
\begin{equation}\label{eq:root-functional-effective-scale}
 \mathcal R_{s,y,\rho}(K)
 :=s^{D-1}\prod_{i<j}
 \left[
  s\max\left\{
   \left|\sinc\frac{y(\kappa_i-\kappa_j)}2\right|,\rho
  \right\}
 \right]^2.
\end{equation}
Thus
$\mathcal R_{s,y,\rho}(K)=\mathcal R_{s,\rho}((y/s)K)$.
Finally, for $L>0$ put
\begin{equation}\label{eq:F-transfer-definition}
 F_{s,y,\rho,L}(K)
 =\mathbf1_{\{y\diam\spec(K)\le L\}}
  \mathcal R_{s,y,\rho}(K).
\end{equation}
This is a nonnegative function on the unit sphere.

The logarithm of $\mathcal R_{s,y,\rho}$ satisfies the following
perturbation bound.

\begin{lemma}\label{lem:root-lipschitz}
There is an absolute constant $C$ such that the following holds.  Let
$0<y\le s\le x_\star$, $0<\rho\le1$, $L>0$, and $\delta\ge0$, and
let $K,K'$ be trace-zero Hermitian unit vectors satisfying
$\|K-K'\|_{\op}\le\delta$.  Then
\begin{equation}\label{eq:root-log-lipschitz}
 \left|
  \log\mathcal R_{s,y,\rho}(K)
  -\log\mathcal R_{s,y,\rho}(K')
 \right|
 \le Cx_\star\rho^{-1}D^2\delta.
\end{equation}
Moreover,
\begin{equation}\label{eq:diameter-enlargement}
 y\diam\spec(K)\le L
 \quad\Longrightarrow\quad
 y\diam\spec(K')\le L+2x_\star\delta.
\end{equation}
\end{lemma}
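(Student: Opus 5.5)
The plan is to reduce \eqref{eq:root-log-lipschitz} to a scalar Lipschitz estimate for each root factor, and then sum over the at most $D^2/2$ root pairs. The factor $s^{D-1}$ and the factors $s^2$ are the same for $K$ and $K'$, so they cancel in the difference of logarithms. What remains is
\[
 \sum_{i<j}2\Bigl|\log\max\bigl\{|\sinc(y(\kappa_i-\kappa_j)/2)|,\rho\bigr\}
 -\log\max\bigl\{|\sinc(y(\kappa_i'-\kappa_j')/2)|,\rho\bigr\}\Bigr|.
\]

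First I will match eigenvalues. Order the eigenvalues of $K$ and $K'$ decreasingly. Weyl's perturbation inequality gives $|\kappa_i-\kappa_i'|\le\|K-K'\|_{\op}\le\delta$ for every $i$, and hence $|(\kappa_i-\kappa_j)-(\kappa_i'-\kappa_j')|\le2\delta$. The product over unordered pairs is symmetric, so using sorted labels for both matrices is legitimate.

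Next comes the scalar Lipschitz estimate. The function $t\mapsto\sinc t$ has $|\sinc'(t)|\le C$ for all real $t$; indeed $|\sinc'|\le 1/2$ suffices. Hence $t\mapsto|\sinc(yt/2)|$ is $(Cy)$-Lipschitz in $t$, and $y\le s\le x_\star$ bounds this constant by $Cx_\star$. The map $r\mapsto\max\{r,\rho\}$ is $1$-Lipschitz. The map $r\mapsto\log r$ is $\rho^{-1}$-Lipschitz on $[\rho,\infty)$. Composing these, each pair contributes at most $2\rho^{-1}\cdot Cx_\star\cdot2\delta$. Summing over the $\binom D2\le D^2/2$ pairs gives $C x_\star\rho^{-1}D^2\delta$ after adjusting $C$.

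For \eqref{eq:diameter-enlargement}, the same Weyl bound gives $\lambda_{\max}(K')\le\lambda_{\max}(K)+\delta$ and $\lambda_{\min}(K')\ge\lambda_{\min}(K)-\delta$. Therefore $\diam\spec(K')\le\diam\spec(K)+2\delta$. Multiplying by $y\le x_\star$ yields $y\diam\spec(K')\le L+2y\delta\le L+2x_\star\delta$.

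There is no real obstacle here; the argument is elementary. The only points to check are that the relevant variable is $\kappa_i-\kappa_j$ with no extra factor (true, since the argument of $\sinc$ is $y(\kappa_i-\kappa_j)/2$ and $y\le x_\star$), and that the cutoff $\rho$ keeps the logarithm's Lipschitz constant bounded even where $\sinc$ vanishes. The unit-norm hypothesis is not actually needed.
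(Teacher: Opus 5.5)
Your proposal is correct and follows the paper's proof essentially step for step. Weyl's inequality on the ordered eigenvalues moves each root difference by at most $2\delta$. The cutoff at $\rho$ makes the logarithm of each factor $C\rho^{-1}$-Lipschitz; the factor $y\le x_\star$ goes into the argument in the paper and into your constant, which changes nothing. Summing over the $\binom D2$ pairs gives the $D^2$. The same Weyl bound shifts $\diam\spec$ by at most $2\delta$. Your bound $|\sinc'|\le 1/2$ is valid, since $\sinc t=\int_0^1\cos(ut)\,du$.
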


\begin{proof}
The scalar function
\[
 g_\rho(t)=\max\{|\sinc(t/2)|,\rho\}
\]
is globally Lipschitz with an absolute Lipschitz constant, and is bounded
below by $\rho$.  Hence $\log g_\rho$ is $C\rho^{-1}$-Lipschitz.  Write $\kappa_1\le\cdots\le\kappa_D$ and
$\kappa_1'\le\cdots\le\kappa_D'$ for the ordered eigenvalues of $K$ and
$K'$.  Weyl's eigenvalue perturbation inequality gives
$|\kappa_i-\kappa_i'|\le\delta$, so every eigenvalue difference changes by at
most $2\delta$.  Summing the corresponding log changes over
$2\binom D2$ real root factors proves \eqref{eq:root-log-lipschitz}.
The same eigenvalue estimate gives
$|\diam\spec(K)-\diam\spec(K')|\le2\delta$, proving
\eqref{eq:diameter-enlargement}.
\end{proof}

To compare the spherical averages, we add a small Gaussian component in
the low-weight subspace.

Let $X$ and $Z$ be independent standard Gaussians in
$\mathfrak h_{\cP}$ and $\mathfrak h_{\cL}$, respectively; here standard
means covariance equal to the identity for the normalized Hilbert--Schmidt
inner product.  Put
\begin{equation}\label{eq:Ytau-definition}
 \tau=e^{-\sqrt D},
 \qquad
 Y_\tau=X+\tau Z,
 \qquad
 Y_1=X+Z.
\end{equation}
For a nonzero vector $V$, write $\widehat V=V/\|V\|_0$.

\begin{lemma}\label{lem:density-domination}
For every nonnegative measurable function $F$ on the full unit sphere,
\begin{equation}\label{eq:density-domination}
 \E F(\widehat Y_\tau)
 \le\tau^{-M}\E F(\widehat Y_1)
 =e^{M\sqrt D}\E F(\widehat Y_1).
\end{equation}
\end{lemma}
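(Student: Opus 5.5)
The plan is to compare the two Gaussian densities directly. The function $F$ is evaluated only at the direction $\widehat Y$, so it is enough to bound the density of $Y_\tau$ by $\tau^{-M}$ times the density of $Y_1$, pointwise on $\mathfrak h$. Integrating $F(\widehat y)$ against both densities then gives \eqref{eq:density-domination}, and we never need the law of $\widehat Y$ itself.

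Write $y=y_{\cP}+y_{\cL}$ according to the orthogonal splitting $\mathfrak h=\mathfrak h_{\cP}\oplus\mathfrak h_{\cL}$. Since $X$ and $Z$ are independent standard Gaussians for $\langle\cdot,\cdot\rangle_0$, the two densities factor. The density of $Y_\tau$ at $y$ is
\[
 p_{\cP}(y_{\cP})\,(2\pi\tau^2)^{-M/2}\exp\{-\|y_{\cL}\|_0^2/(2\tau^2)\},
\]
and the density of $Y_1$ is
\[
 p_{\cP}(y_{\cP})\,(2\pi)^{-M/2}\exp\{-\|y_{\cL}\|_0^2/2\}.
\]
Here $p_{\cP}$ is the standard Gaussian density on $\mathfrak h_{\cP}$, and Lebesgue measure is taken with respect to the $\langle\cdot,\cdot\rangle_0$ structure, so the normalizations match.

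The ratio of the two densities is
\[
 \tau^{-M}\exp\Bigl\{-\tfrac12\|y_{\cL}\|_0^2(\tau^{-2}-1)\Bigr\}.
\]
Because $0<\tau=e^{-\sqrt D}\le1$, the exponent is nonpositive, so the ratio is at most $\tau^{-M}=e^{M\sqrt D}$. Since $F\ge0$, we can integrate $F(\widehat y)$ against both densities; the set $\{y=0\}$ is null for either law. This yields $\E F(\widehat Y_\tau)\le\tau^{-M}\E F(\widehat Y_1)$.

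There is no real obstacle here; the only care needed is measurability and nonnegativity, which ensure that the integrals are well defined and possibly infinite.
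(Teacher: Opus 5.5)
Your proposal is correct and follows essentially the same route as the paper: bound the density of $Y_\tau$ pointwise by $\tau^{-M}$ times the density of $Y_1$, using $\tau\le1$ to discard the Gaussian factor in $y_{\cL}$, and then integrate the degree-zero extension $y\mapsto F(y/\|y\|_0)$ against both laws. Your remark that $\{y=0\}$ is null for both laws is the only extra detail, and it is harmless.
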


\begin{proof}
With respect to the orthogonal splitting
$\mathfrak h_{\cP}\oplus\mathfrak h_{\cL}$, the density of $Y_\tau$ is
\[
 f_\tau(y)
 =(2\pi)^{-N/2}\tau^{-M}
 \exp\left(-\frac12\|y_{\cP}\|_0^2
           -\frac1{2\tau^2}\|y_{\cL}\|_0^2\right).
\]
Relative to the standard full Gaussian density $f_1$ of $Y_1$,
\[
 \frac{f_\tau(y)}{f_1(y)}
 =\tau^{-M}
 \exp\left[-\frac12(\tau^{-2}-1)\|y_{\cL}\|_0^2\right]
 \le\tau^{-M}.
\]
Apply this pointwise density bound to the degree-zero extension
$y\mapsto F(y/\|y\|_0)$.
\end{proof}

\begin{lemma}\label{lem:direction-perturbation}
Let $K_D=N-M$ and set
\begin{equation}\label{eq:good-event-parameters}
 a_D=e^{-\sqrt D/4},
 \qquad
 b_D=e^{\sqrt D/2},
 \qquad
 r_D=\frac{\tau b_D}{a_D\sqrt{K_D}},
\end{equation}
\begin{equation}\label{eq:deltaD-explicit}
 \delta_D=\sqrt M\,r_D+\frac12\sqrt D\,r_D^2.
\end{equation}
On the event
\begin{equation}\label{eq:good-Gaussian-event}
 \mathcal G_D=
 \left\{\|X\|_0\ge a_D\sqrt{K_D},\quad
        \|Z\|_0\le b_D\right\},
\end{equation}
one has
\begin{equation}\label{eq:direction-op-distance}
 \|\widehat X-\widehat Y_\tau\|_{\op}\le\delta_D,
 \qquad
 \delta_D=e^{-\Omega(\sqrt D)}.
\end{equation}
Moreover, for all sufficiently large $D$,
\begin{equation}\label{eq:bad-Gaussian-probability}
 \Prob(\mathcal G_D^c)
 \le e^{-cD^2\sqrt D}+e^{-ce^{\sqrt D}}
\end{equation}
for an absolute $c>0$.
\end{lemma}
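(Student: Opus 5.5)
The statement has two parts: a deterministic operator-norm estimate on $\mathcal G_D$, and a Gaussian tail bound. I will treat them separately.

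\textbf{Direction estimate.} Write $Y_\tau=X+W$ with $W=\tau Z\in\mathfrak h_{\cL}$. Since $X\perp W$ in the normalized Hilbert--Schmidt inner product, $\|Y_\tau\|_0^2=\|X\|_0^2+\|W\|_0^2$. Set
\[
 \epsilon=\frac{\|W\|_0}{\|X\|_0},
 \qquad\text{so that}\qquad
 \epsilon\le\frac{\tau b_D}{a_D\sqrt{K_D}}=r_D
 \quad\text{on }\mathcal G_D.
\]
Then
\[
 \widehat X-\widehat Y_\tau
 =\widehat X\Bigl(1-\frac{1}{\sqrt{1+\epsilon^2}}\Bigr)
 -\frac{W}{\|X\|_0\sqrt{1+\epsilon^2}}.
\]
I bound the two terms in operator norm as follows.

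For the second term, $W/\|W\|_0$ is a low-weight Hilbert--Schmidt unit vector, so \eqref{eq:cheap-op-bound} gives operator norm at most $\sqrt M$. The term therefore has operator norm at most $\sqrt M\,\epsilon\le\sqrt M\,r_D$.

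For the first term, use $\|\widehat X\|_{\op}\le\sqrt D\,\|\widehat X\|_0=\sqrt D$, which holds because $\|\cdot\|_{\op}^2\le\Tr(\cdot^2)=D\|\cdot\|_0^2$. Also $1-(1+\epsilon^2)^{-1/2}\le\epsilon^2/2$. So this term has operator norm at most $\tfrac12\sqrt D\,r_D^2$.

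Adding the two bounds gives $\delta_D$ as in \eqref{eq:deltaD-explicit}. For the size of $\delta_D$: $r_D=e^{-\sqrt D+\sqrt D/2+\sqrt D/4}K_D^{-1/2}=e^{-\sqrt D/4}K_D^{-1/2}$. Since $M$ and $\sqrt D$ are only polynomial in $D$, this yields $\delta_D=e^{-\Omega(\sqrt D)}$.

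\textbf{Probability estimate.} By a union bound, $\Prob(\mathcal G_D^c)$ is at most the sum of two tails.

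The first is the lower tail of $\|X\|_0^2$, which is $\chi^2$ with $K_D$ degrees of freedom. The Laurent--Massart inequality, or directly the Chernoff bound $\Prob(\|X\|_0^2\le\alpha K)\le(\alpha e^{1-\alpha})^{K/2}$, gives
\[
 \Prob\bigl(\|X\|_0^2\le a_D^2K_D\bigr)\le\bigl(e\,a_D^2\bigr)^{K_D/2}
 =\exp\Bigl\{-\tfrac{K_D}{2}\bigl(\tfrac{\sqrt D}{2}-1\bigr)\Bigr\}.
\]
Since $K_D\asymp D^2$, this is at most $e^{-cD^2\sqrt D}$.

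The second is the upper tail of $\|Z\|_0^2$, which is $\chi^2$ with $M$ degrees of freedom, at the level $b_D^2=e^{\sqrt D}\gg M$. A standard Gaussian tail bound gives
\[
 \Prob\bigl(\|Z\|_0\ge b_D\bigr)\le e^{-(b_D-\sqrt M)^2/2}\le e^{-c\,e^{\sqrt D}}
\]
for large $D$.

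The main care points are the two operator-norm conversions. The $\sqrt M$ bound must be used on the low-weight piece, and the crude $\sqrt D$ bound on the high-weight piece, where it is harmless because that term is quadratic in $r_D$. The rest is routine.
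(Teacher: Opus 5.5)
Your proposal is correct and follows the paper's proof essentially step for step. It uses the same orthogonal decomposition of $\widehat Y_\tau$, the $\sqrt M$ operator-norm bound on the low-weight piece and the $\sqrt D$ bound on the quadratic high-weight piece, and the same $\chi^2$ lower-tail Chernoff bound $(e\,a_D^2)^{K_D/2}$. The only cosmetic difference is that you bound the upper tail of $\|Z\|_0$ by Lipschitz Gaussian concentration, whereas the paper uses the exponential-moment bound $2^{M/2}e^{-b_D^2/4}$; both give $e^{-ce^{\sqrt D}}$.
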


\begin{proof}
Because $X$ and $Z$ lie in orthogonal subspaces, if
$r=\tau\|Z\|_0/\|X\|_0$, then
\[
 \widehat Y_\tau
 =\frac1{\sqrt{1+r^2}}\widehat X
  +\frac r{\sqrt{1+r^2}}\widehat Z.
\]
Every full Hilbert--Schmidt unit vector has operator norm at most $\sqrt D$,
and \eqref{eq:cheap-op-bound} gives
$\|\widehat Z\|_{\op}\le\sqrt M$.  Since
$|1-(1+r^2)^{-1/2}|\le r^2/2$, on $\mathcal G_D$ we have
\[
 \|\widehat X-\widehat Y_\tau\|_{\op}
 \le r\sqrt M+\frac12r^2\sqrt D
 \le\delta_D.
\]
The displayed expression for $r_D$ and $K_D\asymp D^2$ show that
$\delta_D=e^{-\Omega(\sqrt D)}$.

For a standard Gaussian $W$ in $\mathbb R^K$ and $0<a<1$,
\[
 \Prob\{\|W\|\le a\sqrt K\}\le(e^{1/2}a)^K.
\]
Applied with $K=K_D$ and $a=a_D$, this gives the first term in
\eqref{eq:bad-Gaussian-probability}.  Similarly, for a standard Gaussian in
$M$ dimensions,
\[
 \Prob\{\|Z\|\ge R\}\le2^{M/2}e^{-R^2/4}.
\]
With $R=b_D$ and $M=O((\log D)^2)$ this gives the second term.
\end{proof}

\begin{proposition}\label{prop:transfer}
For $0<y\le s\le x_\star$, $0<\rho\le1$, and $L>0$, define
\begin{align}
 I_{\cP}(s,y,\rho;L)
 &=\E F_{s,y,\rho,L}(\widehat X),\label{eq:Iplateau-def}\\
 I_{\rm full}(s,y,\rho;L)
 &=\E F_{s,y,\rho,L}(\widehat Y_1).
 \label{eq:Ifull-def}
\end{align}
Then
\begin{align}
 I_{\cP}(s,y,\rho;L)
 &\le
 e^{M\sqrt D+\varepsilon_D}
 I_{\rm full}(s,y,\rho;L+2x_\star\delta_D)
 \label{eq:transfer}\\
 &\quad+
 s^{N}
 \left(e^{-cD^2\sqrt D}+e^{-ce^{\sqrt D}}\right),
 \label{eq:transfer-tail}
\end{align}
where
\begin{equation}\label{eq:epsilonD-explicit}
 \varepsilon_D=Cx_\star\rho^{-1}D^2\delta_D.
\end{equation}
In particular, if $\rho\ge D^{-B}$ for some fixed $B>0$, then
$\varepsilon_D=o(1)$ uniformly in $s$ and $y$.
\end{proposition}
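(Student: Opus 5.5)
The plan is to split the expectation defining $I_{\cP}$ according to the good Gaussian event $\mathcal G_D$ of Lemma~\ref{lem:direction-perturbation}. Since $\widehat X$ is uniform on the unit sphere of $\mathfrak h_{\cP}$, we have $I_{\cP}=\E F_{s,y,\rho,L}(\widehat X)$, and we write this as $\E[F(\widehat X)\mathbf1_{\mathcal G_D}]+\E[F(\widehat X)\mathbf1_{\mathcal G_D^c}]$.

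\emph{Bad event.} Here we use a pointwise upper bound on $F$. Every factor $\max\{|\sinc(\cdot)|,\rho\}\le1$, because $\rho\le1$. Hence
\[
 \mathcal R_{s,y,\rho}(K)\le s^{D-1}\cdot s^{2\binom D2}=s^N .
\]
So the second term is at most $s^N\Prob(\mathcal G_D^c)$. By \eqref{eq:bad-Gaussian-probability}, this is exactly the tail term \eqref{eq:transfer-tail}.

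\emph{Good event.} On $\mathcal G_D$, \eqref{eq:direction-op-distance} gives $\|\widehat X-\widehat Y_\tau\|_{\op}\le\delta_D$. Apply Lemma~\ref{lem:root-lipschitz} with $K=\widehat X$ and $K'=\widehat Y_\tau$; both are trace-zero Hermitian unit vectors. We obtain $\mathcal R_{s,y,\rho}(\widehat X)\le e^{\varepsilon_D}\mathcal R_{s,y,\rho}(\widehat Y_\tau)$. By \eqref{eq:diameter-enlargement}, the indicator $\mathbf1_{\{y\diam\spec\widehat X\le L\}}$ is dominated by the same indicator for $\widehat Y_\tau$ with $L$ replaced by $L+2x_\star\delta_D$. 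Therefore
\[
 F_{s,y,\rho,L}(\widehat X)\mathbf1_{\mathcal G_D}\le e^{\varepsilon_D}F_{s,y,\rho,L+2x_\star\delta_D}(\widehat Y_\tau).
\]
Dropping the indicator of $\mathcal G_D$ is allowed because $F\ge0$. We then take expectations and apply Lemma~\ref{lem:density-domination} to the nonnegative function $F_{s,y,\rho,L+2x_\star\delta_D}$. This converts the $\widehat Y_\tau$ average into $e^{M\sqrt D}I_{\rm full}(s,y,\rho;L+2x_\star\delta_D)$, which gives \eqref{eq:transfer}.

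\emph{Final claim.} If $\rho\ge D^{-B}$, then $\varepsilon_D\le Cx_\star D^{2+B}\delta_D$. Since $\delta_D=e^{-\Omega(\sqrt D)}$, this tends to $0$, and the bound does not involve $s$ or $y$, so the convergence is uniform in them.

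The only delicate point is measurability and making the comparison pointwise. Both the regularized root product and the diameter indicator are functions of the eigenvalues, which are continuous, so everything is Borel. The main thing to check is that $\mathcal G_D$ is defined in terms of the same $X$ and $Z$ that enter $Y_\tau$; it is, by construction.
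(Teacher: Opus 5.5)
Your proposal is correct and follows the paper's proof essentially step for step. It uses the pointwise comparison $F_{s,y,\rho,L}(\widehat X)\le e^{\varepsilon_D}F_{s,y,\rho,L+2x_\star\delta_D}(\widehat Y_\tau)$ on $\mathcal G_D$ via Lemma~\ref{lem:root-lipschitz}, the bound $F\le s^N$ on $\mathcal G_D^c$ combined with \eqref{eq:bad-Gaussian-probability} (valid, as in the paper, for all sufficiently large $D$), and Lemma~\ref{lem:density-domination} applied to the enlarged nonnegative functional.
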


\begin{proof}
On $\mathcal G_D$, Lemma~\ref{lem:root-lipschitz} gives the pointwise
one-sided comparison
\[
 F_{s,y,\rho,L}(\widehat X)
 \le e^{\varepsilon_D}
 F_{s,y,\rho,L+2x_\star\delta_D}(\widehat Y_\tau).
\]
On $\mathcal G_D^c$, use
$|\sinc|\le1$ and $\rho\le1$ to obtain the bound
$F_{s,y,\rho,L}\le s^N$.  Taking expectations,
using Lemma~\ref{lem:direction-perturbation} on the bad event, and then
applying Lemma~\ref{lem:density-domination} to the nonnegative enlarged
functional proves \eqref{eq:transfer}--\eqref{eq:transfer-tail}.
The final assertion follows because $\delta_D$ decays exponentially in
$\sqrt D$.
\end{proof}

We next estimate the change caused by omitting the low-weight component
in \eqref{eq:Hsplit}.

\begin{lemma}\label{lem:cheap-removal}
Let $0\le z<1$, $L>0$, $0<s\le x_\star$, and $0<\rho\le1$.  Let
$K_{\cP}$ and $K_{\cL}$ be Hilbert--Schmidt unit vectors in the high-weight and
low-weight Hermitian subspaces, and put
\begin{equation}\label{eq:Hz-definition}
 H_z=\sqrt{1-z}\,K_{\cP}+\frac{\sqrt z}{D}K_{\cL},
 \qquad
 y=s\sqrt{1-z}.
\end{equation}
Then
\begin{equation}\label{eq:cheap-determinant-comparison}
 \mathcal R_{s,\rho}(H_z)
 \le
 \exp\{Cx_\star\rho^{-1}D\sqrt M\}
 \mathcal R_{s,y,\rho}(K_{\cP}).
\end{equation}
Furthermore,
\begin{equation}\label{eq:cheap-phase-enlargement}
 s\diam\spec(H_z)\le L
 \quad\Longrightarrow\quad
 y\diam\spec(K_{\cP})
 \le L+\frac{2x_\star\sqrt M}{D}.
\end{equation}
\end{lemma}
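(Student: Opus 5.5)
The plan is to reduce both claims to a perturbation argument on eigenvalues, in the spirit of Lemma~\ref{lem:root-lipschitz}, but applied at two different scales. The key observation is the identity
\[
 \mathcal R_{s,y,\rho}(K_{\cP})=\mathcal R_{s,\rho}\bigl((y/s)K_{\cP}\bigr)=\mathcal R_{s,\rho}\bigl(\sqrt{1-z}\,K_{\cP}\bigr),
\]
which follows from the definition \eqref{eq:root-functional-effective-scale}. So it suffices to compare $\mathcal R_{s,\rho}(H_z)$ with $\mathcal R_{s,\rho}(H_z')$, where $H_z'=\sqrt{1-z}\,K_{\cP}$.

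First, $H_z-H_z'=(\sqrt z/D)K_{\cL}$, and by \eqref{eq:cheap-op-bound} this has operator norm at most $\sqrt z\sqrt M/D\le\sqrt M/D$. By Weyl's inequality, the ordered eigenvalues of $H_z$ and $H_z'$ differ by at most $\delta:=\sqrt M/D$. Hence each difference $\kappa_i-\kappa_j$ moves by at most $2\delta$, and each argument $s(\kappa_i-\kappa_j)/2$ moves by at most $s\delta\le x_\star\delta$.

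Next, as in Lemma~\ref{lem:root-lipschitz}, the function $\log g_\rho$ with $g_\rho(t)=\max\{|\sinc(t/2)|,\rho\}$ is $C\rho^{-1}$-Lipschitz. Each of the $2\binom D2\le D^2$ real root factors therefore changes its logarithm by at most $C\rho^{-1}x_\star\delta$, while the prefactors $s^{D-1}$ and $s^2$ per pair are identical on both sides. Summing gives a total change of at most
\[
 C\rho^{-1}x_\star D^2\cdot\frac{\sqrt M}{D}=Cx_\star\rho^{-1}D\sqrt M,
\]
which is \eqref{eq:cheap-determinant-comparison}. One point needs care: Lemma~\ref{lem:root-lipschitz} is stated for unit vectors, but its proof uses only Weyl's inequality and the Lipschitz bound, so I would redo that short argument directly for the non-unit matrices $H_z$ and $H_z'$ rather than cite the lemma. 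There is no normalization issue, because $\mathcal R_{s,\rho}$ is defined for any trace-zero Hermitian matrix by \eqref{eq:root-functional-general}.

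For \eqref{eq:cheap-phase-enlargement}, the same Weyl bound gives $\diam\spec(H_z')\le\diam\spec(H_z)+2\sqrt M/D$. Since $y\,\diam\spec(K_{\cP})=s\,\diam\spec(H_z')$, we get
\[
 y\,\diam\spec(K_{\cP})\le L+\frac{2s\sqrt M}{D}\le L+\frac{2x_\star\sqrt M}{D}.
\]

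No step is a real obstacle. The only subtlety is that the perturbation $\sqrt M/D$ must be measured in operator norm, not Hilbert--Schmidt norm; a Hilbert--Schmidt bound would lose a factor $\sqrt D$. The cheap bound \eqref{eq:cheap-op-bound} supplies exactly the operator-norm control needed.
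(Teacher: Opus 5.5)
Your proposal is correct and follows essentially the same route as the paper: you bound the low-weight perturbation $(\sqrt z/D)K_{\cL}$ in operator norm by $\sqrt M/D$ via \eqref{eq:cheap-op-bound}, use Weyl's inequality to shift each phase difference by $O(x_\star\sqrt M/D)$, and sum the $C\rho^{-1}$-Lipschitz bound for $\log g_\rho$ over the $O(D^2)$ root factors; the diameter claim then follows from the same Weyl bound. Your explicit rescaling identity $\mathcal R_{s,y,\rho}(K_{\cP})=\mathcal R_{s,\rho}(\sqrt{1-z}\,K_{\cP})$, together with rerunning the Lipschitz argument for non-unit matrices, only spells out what the paper does implicitly.
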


\begin{proof}
The perturbation from $\sqrt{1-z}K_{\cP}$ to $H_z$ has operator norm at
most $\sqrt M/D$ by \eqref{eq:cheap-op-bound}.  If
$\kappa_1,\ldots,\kappa_D$ are the eigenvalues of $K_{\cP}$, Weyl's
inequality therefore changes each phase difference
$y(\kappa_i-\kappa_j)$ by at most
$2x_\star\sqrt M/D$.  The same scalar Lipschitz estimate used in
Lemma~\ref{lem:root-lipschitz}, summed over $O(D^2)$ real root factors,
gives \eqref{eq:cheap-determinant-comparison}.  The spectral-diameter
estimate follows from
\[
 \diam\spec(\sqrt{1-z}K_{\cP})
 \le\diam\spec(H_z)
   +2\left\|\frac{\sqrt z}{D}K_{\cL}\right\|_{\op}.
\]
\end{proof}

In the full-sphere integral obtained from Proposition~\ref{prop:phase-range},
Lemma~\ref{lem:cheap-removal}, and Proposition~\ref{prop:transfer}, the
retained phases have range at most
\begin{equation}\label{eq:total-phase-enlargement}
 2\pi+\frac{2x_\star\sqrt M}{D}+2x_\star\delta_D.
\end{equation}
Since $M=O((\log D)^2)$ and $\delta_D=e^{-\Omega(\sqrt D)}$, this is at most
the fixed number
\begin{equation}\label{eq:Lstar}
 L_\star=2\pi+1
\end{equation}
for all sufficiently large $D$.

\subsection{Abel estimates for the logarithmic kernel}

For $\rho>0$, define
\begin{equation}\label{eq:Krho}
 K_\rho(t)=\max\{2|\sin(t/2)|,\rho|t|\}.
\end{equation}

\begin{lemma}\label{lem:abel}
Fix $L>2\pi$.  There are universal $c_0,c_1>0$ with the following property.
Suppose $0<\Delta\le1$ and real numbers $\theta_1,\ldots,\theta_D$ satisfy
\begin{equation}\label{eq:abel-hyp}
 \max_{i,j}|\theta_i-\theta_j|\le L,
 \qquad
 \left|
 \frac1D\sum_i\operatorname{Arg}(e^{i\theta_i})^2
 -\frac{\pi^2}{3}
 \right|\ge\Delta.
\end{equation}
For all sufficiently small $\Delta$, choose
\[
 h=c_0\Delta^2,
 \qquad r=e^{-h},
 \qquad \rho=(1-r)/L.
\]
Then
\begin{equation}\label{eq:abel-product}
 \log\prod_{i<j}K_\rho(\theta_i-\theta_j)^2
 \le-c_1\Delta^2D^2+C D\log(1/\Delta).
\end{equation}
For fixed $0<\Delta\le1$, the same conclusion holds with a fixed $\rho$
and an $O_\Delta(D)$ remainder.
\end{lemma}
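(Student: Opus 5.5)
The idea is to compare $\log K_\rho$ with an Abel-regularized logarithmic kernel whose Fourier coefficients are explicit. Write $z_j=e^{i\theta_j}$. For $0<r<1$,
\[
 -\log|1-re^{it}|=\sum_{k\ge1}\frac{r^k}{k}\cos(kt),
\]
and every coefficient is positive. This means the double sum $\sum_{i,j}\log|1-re^{i(\theta_i-\theta_j)}|$ is a negative semidefinite quadratic form in the empirical measure $\nu=D^{-1}\sum_j\delta_{\theta_j}$. Concretely,
\[
 \sum_{i\ne j}\log|1-re^{i(\theta_i-\theta_j)}|
 =-D^2\sum_{k\ge1}\frac{r^k}{k}|\hat\nu(k)|^2+D\log\frac{1}{1-r},
 \qquad \hat\nu(k)=\frac1D\sum_j e^{-ik\theta_j}.
\]
The diagonal $i=j$ term supplies the $D\log(1/(1-r))$ correction.

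The first step is a pointwise comparison: $\log K_\rho(t)\le\log|1-re^{it}|+C(1-r)$ for all $|t|\le L$, once $\rho=(1-r)/L$. We have $|1-re^{it}|^2=(1-r)^2+4r\sin^2(t/2)$. Where $2|\sin(t/2)|\ge\rho|t|$, this gives $2|\sin(t/2)|\le r^{-1/2}|1-re^{it}|$. Where instead $\rho|t|$ is the maximum, we have $\rho|t|\le\rho L=1-r\le|1-re^{it}|$. This step is why $\rho$ is tied to $1-r$ and why the phase bound $L$ enters.

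Summing over the $D(D-1)$ ordered pairs then gives
\[
 \log\prod_{i<j}K_\rho(\theta_i-\theta_j)^2\le -D^2\sum_{k\ge1}\frac{r^k}{k}|\hat\nu(k)|^2+CD\log\frac1{1-r}+CD^2(1-r).
\]
With $1-r\asymp h=c_0\Delta^2$, the last term is $Cc_0\Delta^2D^2$, which is absorbed once $c_0$ is small. The middle term is $O(D\log(1/\Delta))$.

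The remaining step, and the main obstacle, is a Fourier lower bound of the form
\[
 \sum_{k\ge1}\frac{r^k}{k}|\hat\nu(k)|^2\ge c\Delta^2,
\]
with $c$ independent of $c_0$; choosing $c_0$ small relative to $c$ then closes the argument. For this I will use the Fourier expansion on $(-\pi,\pi]$:
\[
 \operatorname{Arg}(e^{i\theta})^2=\frac{\pi^2}3+4\sum_{k\ge1}\frac{(-1)^k}{k^2}\cos(k\theta).
\]
This gives $\frac1D\sum_j\operatorname{Arg}(e^{i\theta_j})^2-\frac{\pi^2}3=4\sum_k(-1)^k k^{-2}\Re\hat\nu(k)$, so the hypothesis says this sum has absolute value at least $\Delta/4$. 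I split the series at a level $K_0$. The tail $k>K_0$ contributes at most $\sum_{k>K_0}k^{-2}\le1/K_0$, since $|\hat\nu|\le1$. Taking $K_0\asymp1/\Delta$ makes the tail at most $\Delta/8$. Cauchy--Schwarz on the head then gives
\[
 \Delta/8\le\sum_{k\le K_0}k^{-2}|\hat\nu(k)|\le\Bigl(\sum_k k^{-3}\Bigr)^{1/2}\Bigl(\sum_{k\le K_0}\frac{|\hat\nu(k)|^2}{k}\Bigr)^{1/2}.
\]
For $k\le K_0\asymp1/\Delta$ we have $r^k=e^{-hk}\ge e^{-c_0\Delta}\ge1/2$, so the head is controlled by $\sum_k r^k|\hat\nu(k)|^2/k$. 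The result is $\sum_k r^k|\hat\nu(k)|^2/k\ge\Delta^2/(128\,\zeta(3))$. This is consistent with the $\zeta(3)$ appearing in the corollary on the local rate, and the choice $r=e^{-h}$ with $h\propto\Delta^2$ is exactly what makes the cutoff at $1/\Delta$ harmless.

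For fixed $\Delta$ the same argument runs with fixed $c_0$, hence fixed $r$ and $\rho$, and the $D\log(1/(1-r))$ term becomes $O_\Delta(D)$. One point to check is that the lifted $\theta_i$ enter only through $e^{i\theta_i}$ in $\hat\nu$, while the range condition $|\theta_i-\theta_j|\le L$ is used only in the pointwise kernel comparison. Both hold as set up.
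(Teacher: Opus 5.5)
Your proposal is correct and follows the paper's proof. Both arguments use the majorant $K_\rho(t)\le r^{-1/2}|1-re^{it}|$ on $|t|\le L$, the exact Abel identity with its diagonal term $D\log\frac{1}{1-r}$, the Fourier series of $\Arg(e^{i\theta})^2$, and Cauchy--Schwarz against $\zeta(3)$. The only difference is cosmetic: you handle the damping factor $r^k$ by truncating at $K_0\asymp 1/\Delta$ and using $r^k\ge 1/2$ on the head, which costs a factor $2$ in the constant. The paper instead inserts the weights $r^{k/2}$ into the full series and bounds $\sum_k(1-r^{k/2})/k^2\le Ch\log(e/h)$.
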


\begin{proof}
For $|t|\le L$,
\[
 K_\rho(t)\le r^{-1/2}|1-re^{it}|.
\]
Let $p_k=\sum_je^{ik\theta_j}$.  The exact Abel identity is
\begin{equation}\label{eq:abel-identity}
 \log\prod_{i<j}|1-re^{i(\theta_i-\theta_j)}|^2
 =-\sum_{k\ge1}\frac{r^k}{k}|p_k|^2-D\log(1-r).
\end{equation}
Writing $\phi_j=\operatorname{Arg}(e^{i\theta_j})$ and using
\[
 \phi^2=\frac{\pi^2}{3}
 +4\sum_{k\ge1}\frac{(-1)^k}{k^2}\cos(k\phi)
\]
gives
\[
 \left|
 \sum_{k\ge1}\frac{(-1)^{k+1}}{k^2}\Re p_k
 \right|
 \ge\frac{D\Delta}{4}.
\]
Set
$A(r)=\sum_{k\ge1}(1-r^{k/2})/k^2$.  Since
$1-e^{-hk/2}\le\min\{hk/2,1\}$,
\[
 A(e^{-h})\le Ch\log(e/h).
\]
For $h=c_0\Delta^2$ with $c_0$ sufficiently small, this is at most
$\Delta/8$.  Hence
\[
 \left|\sum_{k\ge1}
 \frac{(-1)^{k+1}r^{k/2}}{k^2}\Re p_k\right|
 \ge\frac{D\Delta}{8}.
\]
Cauchy--Schwarz yields
\begin{equation}\label{eq:fourier-energy}
 \sum_{k\ge1}\frac{r^k}{k}|p_k|^2
 \ge\frac{D^2\Delta^2}{64\zeta(3)}.
\end{equation}
Substituting the majorant into \eqref{eq:abel-identity} introduces the error
terms
\[
 -\binom D2\log r=O(hD^2),
 \qquad
 -D\log(1-r)=O(D\log(1/\Delta)).
\]
Taking $c_0$ sufficiently small relative to the coefficient in
\eqref{eq:fourier-energy} proves \eqref{eq:abel-product}.
\end{proof}

\begin{lemma}
\label{lem:abel-sharp-coefficient}
Fix $L>2\pi$ and $\Delta>0$.  Suppose, for each $D$, that real numbers
$\theta_1,\ldots,\theta_D$ satisfy
\begin{equation}\label{eq:abel-sharp-hyp}
 \max_{i,j}|\theta_i-\theta_j|\le L,
 \qquad
 \left|
 \frac1D\sum_i\operatorname{Arg}(e^{i\theta_i})^2-\frac{\pi^2}{3}
 \right|\ge\Delta.
\end{equation}
Let $h_D\downarrow0$ satisfy $\log(1/h_D)=o(D)$, put
$r_D=e^{-h_D}$, and set $\rho_D=(1-r_D)/L$.  Then
\begin{equation}\label{eq:abel-sharp-conclusion}
 \limsup_{D\to\infty}\frac1{D^2}
 \log\prod_{i<j}K_{\rho_D}(\theta_i-\theta_j)^2
 \le-\frac{\Delta^2}{16\zeta(3)}.
\end{equation}
\end{lemma}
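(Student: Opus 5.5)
The plan is to rerun the proof of Lemma~\ref{lem:abel}, keeping every constant explicit and letting $h_D\to0$ instead of tying $h$ to $\Delta$. The factor $64\zeta(3)$ there came from replacing $\Delta/4$ by $\Delta/8$ to absorb the Abel truncation error. With $h_D\to0$ that error is $o(1)$, so the sharp constant $16\zeta(3)$ should survive.

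\emph{Step 1: the majorant.} For $|t|\le L$ and $r=r_D$, I first check $K_{\rho_D}(t)\le r^{-1/2}|1-re^{it}|$. This follows from
\[
|1-re^{it}|^2=(1-r)^2+4r\sin^2(t/2).
\]
The identity gives $2|\sin(t/2)|\le r^{-1/2}|1-re^{it}|$. It also gives $\rho_D|t|\le 1-r\le|1-re^{it}|\le r^{-1/2}|1-re^{it}|$. Since all differences satisfy $|\theta_i-\theta_j|\le L$, the Abel identity \eqref{eq:abel-identity} then yields
\[
\log\prod_{i<j}K_{\rho_D}(\theta_i-\theta_j)^2
\le -\binom D2\log r_D-\sum_{k\ge1}\frac{r_D^k}{k}|p_k|^2-D\log(1-r_D).
\]
The first error term equals $\binom D2 h_D=o(D^2)$. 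The third is $D\log(1/(1-e^{-h_D}))=D\log(1/h_D)+O(D)=o(D^2)$, by the hypothesis $\log(1/h_D)=o(D)$.

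\emph{Step 2: the sharp Fourier lower bound.} Substituting the Fourier series of $\phi^2$ gives
\[
\Bigl|\sum_{k\ge1}\frac{(-1)^{k+1}}{k^2}\Re p_k\Bigr|\ge\frac{D\Delta}{4}.
\]
Since $|\Re p_k|\le D$, inserting the Abel weights changes this sum by at most $D\,A(r_D)$, where $A(r)=\sum_k(1-r^{k/2})/k^2$. By the bound in Lemma~\ref{lem:abel}, $A(e^{-h_D})\le Ch_D\log(e/h_D)\to0$. Therefore
\[
\sum_k\frac{r_D^{k/2}}{k^2}|\Re p_k|\ge D\Bigl(\frac\Delta4-o(1)\Bigr).
\]
Cauchy--Schwarz with weights $r_D^{k/2}|p_k|k^{-1/2}$ and $k^{-3/2}$, together with $\sum_k k^{-3}=\zeta(3)$, gives
\[
\sum_k\frac{r_D^k}{k}|p_k|^2\ge\frac{D^2(\Delta/4-o(1))^2}{\zeta(3)}.
\]

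\emph{Step 3: conclusion.} Combining Steps 1 and 2, dividing by $D^2$ and taking $\limsup$ gives $-\Delta^2/(16\zeta(3))$, as claimed. The $o(1)$ in Step 2 depends only on $h_D$, so it is uniform over all admissible angle configurations.

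The only delicate point is making sure no constant factor is lost. This requires using the exact bound $|\Re p_k|\le D$ in the truncation step, and applying Cauchy--Schwarz with exactly the weights matched to the energy $\sum_k r^k|p_k|^2/k$. The remaining step is the routine verification that $\log(1/h_D)=o(D)$ makes the $-D\log(1-r_D)$ term negligible at speed $D^2$.
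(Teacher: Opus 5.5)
Your proposal is correct and follows essentially the same route as the paper. Both proofs insert the majorant $K_{\rho_D}(t)\le r_D^{-1/2}|1-r_De^{it}|$ into the Abel identity, with $o(D^2)$ errors coming from $h_D\to0$ and $\log(1/h_D)=o(D)$. Both then use the Fourier series of $\phi^2$ with an $o(1)$ Abel truncation loss and apply Cauchy--Schwarz against $\sum_k k^{-3}=\zeta(3)$. You additionally make explicit two verifications the paper leaves implicit: the identity $|1-re^{it}|^2=(1-r)^2+4r\sin^2(t/2)$ behind the majorant, and the choice of Cauchy--Schwarz weights.
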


\begin{proof}
With $p_k=\sum_j e^{ik\theta_j}$, the Fourier identity in the proof of
Lemma~\ref{lem:abel} gives
\[
 \left|\sum_{k\ge1}\frac{(-1)^{k+1}}{k^2}\Re p_k\right|
 \ge\frac{D\Delta}{4}.
\]
Since
$\sum_{k\ge1}(1-r_D^{k/2})/k^2=o(1)$, the same linear functional with the
factor $r_D^{k/2}$ has absolute value at least
$D(\Delta/4-o(1))$.  Cauchy--Schwarz therefore yields
\[
 \sum_{k\ge1}\frac{r_D^k}{k}|p_k|^2
 \ge D^2\left(\frac{\Delta^2}{16\zeta(3)}-o(1)\right).
\]
Substituting the majorant
$K_{\rho_D}(t)\le r_D^{-1/2}|1-r_De^{it}|$ into the Abel identity introduces
only
\[
 -\binom D2\log r_D=o(D^2),
 \qquad
 -D\log(1-r_D)=o(D^2),
\]
by the hypotheses on $h_D$.  This proves
\eqref{eq:abel-sharp-conclusion}.
\end{proof}

\subsection{Weyl integration and normalization}

Let
\[
 \Sigma_D=\left\{\lambda\in\mathbb R^D:
 \sum_i\lambda_i=0,\ \sum_i\lambda_i^2=D\right\}
\]
with normalized surface probability $d\nu_D$, and write
$P_D=\binom D2$.  Set
\[
 \Delta(\lambda)=\prod_{i<j}(\lambda_i-\lambda_j)
\]
and define
\begin{equation}\label{eq:ZD}
 \mathcal Z_D=\int_{\Sigma_D}\Delta(\lambda)^2\,d\nu_D(\lambda).
\end{equation}
Write $\E_{\rm full}$ for normalized expectation on the full trace-zero
Hermitian unit sphere.  By the Weyl eigenvalue decomposition on the trace-zero
Hermitian space, followed by radial disintegration \cite{Hall,Mehta}, every
nonnegative conjugation-invariant function $\Psi$ on that sphere satisfies
\begin{equation}\label{eq:spherical-Weyl-formula}
 \E_{\rm full}\Psi(H)
 =\frac1{\mathcal Z_D}
  \int_{\Sigma_D}\Psi(\diag\lambda)\Delta(\lambda)^2\,d\nu_D(\lambda).
\end{equation}
This normalization makes \eqref{eq:spherical-Weyl-formula} a probability
identity.

\begin{proposition}\label{prop:ZD}
One has
\begin{equation}\label{eq:ZDexact}
 \mathcal Z_D=
 2^{-P_D}D^{P_D}
 \frac{\Gamma((D-1)/2)}{\Gamma((D^2-1)/2)}
 \prod_{j=1}^D j!.
\end{equation}
\end{proposition}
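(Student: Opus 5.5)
We compute $\mathcal Z_D$ by comparing the spherical integral with a Gaussian integral on the trace-zero hyperplane, using that $\Delta(\lambda)^2$ is homogeneous and that Mehta's Gaussian integral is known.

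\emph{Step 1: Gaussian moment on the hyperplane.} Let $E_0=\{\lambda\in\mathbb R^D:\sum_i\lambda_i=0\}$, of dimension $D-1$, with the Euclidean norm. Let $g$ be a standard Gaussian vector in $E_0$. We claim
\[
 \E\,\Delta(g)^2=\prod_{j=1}^D j!.
\]
To see this, take a standard Gaussian $x\in\mathbb R^D$ and decompose $x=g+\bar x\mathbf 1$, where $\bar x$ is the mean of the coordinates. The orthogonal projection $g$ of $x$ onto $E_0$ is a standard Gaussian in $E_0$ and is independent of $\bar x$. Since $\Delta$ is invariant under translation by multiples of $\mathbf 1$, we have $\Delta(x)=\Delta(g)$. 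Hence $\E\Delta(g)^2=\E\Delta(x)^2$. By Mehta's integral with $\gamma=1$,
\[
 (2\pi)^{-D/2}\int\Delta(x)^2e^{-|x|^2/2}\,dx=\prod_{j=1}^D\frac{\Gamma(1+j)}{\Gamma(2)},
\]
which equals $\prod_{j=1}^D j!$ \cite{Mehta}.

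\emph{Step 2: polar decomposition.} Write $g=|g|\,\omega$, where $|g|$ and $\omega$ are independent and $\omega$ is uniform on the unit sphere of $E_0$. The polynomial $\Delta$ has degree $P_D$, so $\Delta(g)^2=|g|^{2P_D}\Delta(\omega)^2$. Since $|g|^2$ is chi-squared with $D-1$ degrees of freedom,
\[
 \E|g|^{2P_D}=2^{P_D}\frac{\Gamma\bigl((D-1)/2+P_D\bigr)}{\Gamma\bigl((D-1)/2\bigr)}.
\]
Using $(D-1)/2+D(D-1)/2=(D^2-1)/2$, the Gamma function in the numerator is $\Gamma((D^2-1)/2)$.

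\emph{Step 3: rescale to $\Sigma_D$.} The set $\Sigma_D$ is the sphere of radius $\sqrt D$ in $E_0$, and $\nu_D$ is the pushforward of the law of $\omega$ under $\omega\mapsto\sqrt D\,\omega$. By homogeneity,
\[
 \mathcal Z_D=D^{P_D}\,\E\Delta(\omega)^2=D^{P_D}\,\frac{\E\Delta(g)^2}{\E|g|^{2P_D}}.
\]
Substituting Steps 1 and 2 gives exactly \eqref{eq:ZDexact}.

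The only delicate point is bookkeeping of normalizations. One must check that $\nu_D$ really is uniform surface measure for the Euclidean structure on $E_0$ used in Steps 1 and 2, with no factor from the normalized Hilbert--Schmidt norm entering at this stage; that factor is absorbed into $\Sigma_D$ having radius $\sqrt D$. One must also confirm the chi-squared degrees of freedom $D-1$. Both are routine checks.
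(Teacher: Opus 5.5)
Your proof is correct and follows essentially the same route as the paper's: Mehta's Gaussian integral, removal of the trace direction using translation invariance of $\Delta$, polar decomposition on the trace-zero hyperplane using homogeneity of degree $P_D$, and rescaling of the radius to $\sqrt D$. The only difference is presentation: you package the paper's explicit radial integral and its division by the unit-sphere area into the single chi-squared moment $\E|g|^{2P_D}=2^{P_D}\Gamma((D^2-1)/2)/\Gamma((D-1)/2)$, which gives the same constants.
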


\begin{proof}
The beta-two Mehta integral \cite{Mehta} is
\[
 \int_{\mathbb R^D}e^{-\|\lambda\|^2/2}\Delta(\lambda)^2\,d\lambda
 =(2\pi)^{D/2}\prod_{j=1}^D j!.
\]
Splitting off the normalized trace coordinate gives the trace-zero Gaussian
integral
\[
 (2\pi)^{(D-1)/2}\prod_{j=1}^D j!.
\]
In the $(D-1)$-dimensional trace-zero hyperplane, $\Delta^2$ has degree
$D(D-1)$, so the radial integral is
\[
 2^{(D^2-3)/2}\Gamma((D^2-1)/2).
\]
Divide by the unit-sphere area
$2\pi^{(D-1)/2}/\Gamma((D-1)/2)$ and scale the radius from one to $\sqrt D$.
This gives \eqref{eq:ZDexact}.
\end{proof}

For $y=s\sqrt{1-z}$, a regularized root factor can be written
\begin{equation}\label{eq:root-K}
 s\max\left\{
 \left|\sinc\frac{y(\lambda_i-\lambda_j)}2\right|,\rho
 \right\}
 =\frac{K_\rho(y(\lambda_i-\lambda_j))}
 {\sqrt{1-z}|\lambda_i-\lambda_j|}.
\end{equation}
For $z<1$, the quotient is understood by continuity when eigenvalues
coincide.  Weyl's $\Delta^2$ density therefore cancels the Jacobi denominators:
\begin{equation}\label{eq:weyl-cancel}
 \Delta(\lambda)^2\prod_{i<j}
 \left[
 \frac{K_\rho(y(\lambda_i-\lambda_j))}
 {\sqrt{1-z}|\lambda_i-\lambda_j|}
 \right]^2
 =(1-z)^{-P_D}\prod_{i<j}K_\rho(y(\lambda_i-\lambda_j))^2.
\end{equation}

Let $\Vol_{\rm F}$ denote Riemannian volume for the Frobenius inner
product $\langle X,Y\rangle_{\rm F}=\Tr(X^\dagger Y)$ on $U(D)$.
Let $G_{\mathrm B}$ denote the Barnes $G$-function, so that
$G_{\mathrm B}(D+1)=\prod_{j=1}^{D-1}j!$.  Macdonald's compact-group volume formula
\cite{Macdonald} gives
\[
 \Vol_{\rm F}U(D)=\frac{(2\pi)^{D(D+1)/2}}{G_{\mathrm B}(D+1)}.
\]
The central $U(1)$ fibre has Frobenius length $2\pi\sqrt D$.  The quotient
has dimension $N=D^2-1$, and rescaling its metric by $D^{-1}$ multiplies its
volume by $D^{-N/2}$.  Therefore the normalized Hilbert--Schmidt group volume
is
\begin{equation}\label{eq:V0}
 \Vol_0\PU(D)
 =D^{-D^2/2}\frac{(2\pi)^{(D^2+D-2)/2}}{G_{\mathrm B}(D+1)}.
\end{equation}
Combining \eqref{eq:ZDexact}, \eqref{eq:V0}, and
$N\omega_N=2\pi^{N/2}/\Gamma(N/2)$ gives the exact identity
\begin{equation}\label{eq:exact-collapse}
 \frac{N\omega_N}{\Vol_0\PU(D)\,\mathcal Z_D}
 =\frac{2^{2-D}D^{D/2}\pi^{(1-D)/2}}
 {D!\Gamma((D-1)/2)}.
\end{equation}
Its logarithm is $-D\log D+O(D)$.

\section{Small-ball estimates and the lower threshold}\label{sec:small-ball}

We collect the preceding estimates into a single inequality with constants
uniform up to $x_\star=\pi/\sqrt3$.

Set
\begin{equation}\label{eq:C-D-definition}
 \mathfrak C_D
 :=\frac{2^{2-D}D^{D/2}\pi^{(1-D)/2}}
 {D!\Gamma((D-1)/2)},
\end{equation}
and
\begin{equation}\label{eq:B-D-definition}
 \mathfrak B_D
 :=\frac{B(M/2,(D-1-M)/2)}
 {B(M/2,(N-M)/2)}.
\end{equation}
Here $\mathfrak C_D$ is the normalization factor in
\eqref{eq:exact-collapse}, while $\mathfrak B_D$ is the beta-integral factor.

\begin{proposition}\label{prop:uniform-estimate}
There are absolute constants $\Delta_0,c,C>0$ such that the following holds
for all sufficiently large $D$.  Suppose
\begin{equation}\label{eq:uniform-hypotheses}
 D^{-1/4}\le\Delta\le\Delta_0,
 \qquad
 0<x\le x_\star,
 \qquad
 x^2\le\frac{\pi^2}{3}-\Delta.
\end{equation}
Use Lemma~\ref{lem:abel} with the fixed phase window $L_\star=2\pi+1$:
\begin{equation}\label{eq:uniform-regularizer}
 h=c_0\Delta^2,
 \qquad r=e^{-h},
 \qquad \rho=\frac{1-r}{L_\star}.
\end{equation}
Then $\rho\asymp\Delta^2$, with absolute comparison constants, and
\begin{align}
 \mu_D\!\left(B_A([I],xD)\right)
 &\le
 \mathfrak C_D\mathfrak B_D
 \frac{x^{D-1}}{D-1}
 \exp\left\{-c\Delta^2D^2+\mathcal E_D(\Delta)\right\}
 +\mathfrak T_D,
 \label{eq:uniform-integrated-bound}
\end{align}
where
\begin{equation}\label{eq:uniform-error-estimate}
 \mathcal E_D(\Delta)
 \le C\left(
 \Delta^{-2}MD^{3/2}
 +MD^{3/2}
 +\Delta^{-2}D\sqrt M
 +M\sqrt D
 +D\log(1/\Delta)
 \right)
\end{equation}
and
\begin{equation}\label{eq:uniform-tail}
 0\le\mathfrak T_D\le e^{-cD^2\sqrt D}.
\end{equation}
All constants in \eqref{eq:uniform-integrated-bound}--
\eqref{eq:uniform-tail} are uniform in $x$, $\Delta$, and $D$ in the stated
range.
\end{proposition}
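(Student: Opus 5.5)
Start from the polar identity \eqref{eq:polar-normalized}. For each $s\in(0,x]$ and $X\in S_0^{N-1}$ the cut-time indicator $\mathbf 1_{\{sD<c(X)\}}$ does two things. By Proposition~\ref{prop:phase-range} it forces $s\diam\spec(H)<2\pi$. Since minimizing geodesics realize distance, it also lets us use the momentum normalization: $\frac1D\Tr\bigl((sH)^2\bigr)=s^2\|H\|_0^2\le s^2\le x^2\le\pi^2/3-\Delta$. Indeed $\|m_0\|_0^2\le D^2$, so $\|H\|_0\le1$, and the eigenphases $\theta_j=s h_j$ lie in a window of length $<2\pi$, with the trace normalized. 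We would pick a lift in which the principal arguments $\Arg e^{i\theta_j}$ satisfy $\frac1D\sum\Arg(e^{i\theta_j})^2\le\frac1D\sum\theta_j^2$. This holds after centering because the phase range is at most $2\pi$, using a translation argument over the $2\pi$-window; we expect this centering step to need care and would handle it by rotating by a common phase, which the Abel product ignores. The deficit hypothesis of Lemma~\ref{lem:abel} then holds with $\Delta$.

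Inside the integrand, apply Proposition~\ref{prop:actual-frozen} with $\rho$ from \eqref{eq:uniform-regularizer}. This bounds $|\det\widehat J(sD)|$ by $\mathcal R_{s,\rho}(H)\,e^{C(\rho^{-1}MD^{3/2}+MD^{3/2})}$, via \eqref{eq:regularized-frozen-product}. It produces the first two terms of $\mathcal E_D$, since $\rho\asymp\Delta^2$; this equivalence follows from $1-e^{-h}\asymp h$ for small $h$. Next, split $X$ as in \eqref{eq:metric-sphere-split}. Lemma~\ref{lem:cheap-removal} replaces $\mathcal R_{s,\rho}(H)$ by $\mathcal R_{s,y,\rho}(H_{\cP})$ with $y=s\sqrt{1-z}$, at cost $\Delta^{-2}D\sqrt M$, and enlarges the phase window to the value in \eqref{eq:total-phase-enlargement}. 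Proposition~\ref{prop:transfer} then transfers the $H_{\cP}$-average to the full trace-zero sphere. This costs $M\sqrt D+o(1)$ in the exponent and adds the tail $s^N(e^{-cD^2\sqrt D}+\dots)$. That tail, integrated against $ds/s$ up to $x_\star$ and multiplied by the prefactor $N\omega_N/\Vol_0\PU(D)$, is absorbed into $\mathfrak T_D$; this uses that the prefactor's logarithm is only $O(D^2\log D)$, which is smaller than $D^2\sqrt D$. The second-moment constraint must survive the transfer: it is a constraint on $y^2\|K\|$-type statistics of the perturbed matrix, and Weyl perturbations of size $O(\sqrt M/D+\delta_D)$ shift it by $o(D^{-1/2})\ll\Delta$. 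So we carry the indicator of the deficit event, slightly relaxed to $\Delta/2$, inside $F$ alongside the diameter indicator.

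On the full sphere, apply the Weyl formula \eqref{eq:spherical-Weyl-formula}. The cancellation \eqref{eq:weyl-cancel} turns the integrand into $(1-z)^{-P_D}\mathcal Z_D^{-1}\prod_{i<j}K_\rho(y(\lambda_i-\lambda_j))^2$, times the prefactor $s^{D-1}$ coming from the Cartan directions. For $\lambda\in\Sigma_D$ the phases $\theta_i=y\lambda_i/\sqrt D$ lie in the window $L_\star$ and have deficit at least $\Delta/2$. Lemma~\ref{lem:abel} then bounds the product by $e^{-c\Delta^2D^2+CD\log(1/\Delta)}$, uniformly in $\lambda$, giving the main term. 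The remaining bookkeeping has two parts. The $z$-integral of $(1-z)^{-P_D}$ against the Beta$(M/2,(N-M)/2)$ law equals exactly $\mathfrak B_D$, because $(N-M)/2-P_D=(D-1-M)/2$. The $s$-integral $\int_0^x s^{D-1}\,ds/s$ gives $x^{D-1}/(D-1)$. Together with \eqref{eq:exact-collapse} this yields $\mathfrak C_D$. The scaling by $\sqrt D$ between $\Sigma_D$ and the unit sphere must be tracked consistently with the $\sinc$ arguments.

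The main obstacle is the interaction between the deficit condition and the chain of comparisons. The Abel lemma needs the second-moment deficit on the full-sphere eigenvalues, but the deficit is only known for the geodesic's $H$. It must therefore be propagated through the low-weight removal and the Gaussian transfer as an extra indicator, keeping the functional nonnegative and one-sided monotone, exactly as was done for the diameter indicator. The lower restriction $\Delta\ge D^{-1/4}$ ensures that these perturbations are negligible relative to $\Delta$.
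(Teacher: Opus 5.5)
Your outline is correct and follows the paper's proof step for step: the polar identity, Proposition~\ref{prop:actual-frozen} with $\delta=\rho s$, Lemma~\ref{lem:cheap-removal} at $y=s\sqrt{1-z}$, Proposition~\ref{prop:transfer}, the Weyl cancellation \eqref{eq:weyl-cancel}, Lemma~\ref{lem:abel} with $L=L_\star$, and the exact beta, radial, and normalization integrals.

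The step you single out as the main obstacle is not actually one. After the low-weight removal, every functional in the chain is evaluated at a $\|\cdot\|_0$-unit vector at effective scale $y\le s\le x$. So for every $\lambda\in\Sigma_D$ in the final integral,
\[
 \frac1D\sum_{i=1}^D\Arg\bigl(e^{iy\lambda_i}\bigr)^2
 \le\frac1D\sum_{i=1}^D(y\lambda_i)^2=y^2\le s^2\le x^2\le\frac{\pi^2}{3}-\Delta .
\]
This uses only $|\Arg(e^{i\theta})|\le|\theta|$ for every real $\theta$; it is \eqref{eq:second-moment-bound}. Hence the deficit never has to be transported from the geodesic's $H$. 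You need no extra indicator in $F$ and no change to Proposition~\ref{prop:transfer}. You also need no relaxation to $\Delta/2$, which would anyway clash with the fixed choice $h=c_0\Delta^2$ unless you re-tune $c_0$.

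The lift/centering step is likewise unnecessary, and rotating by a common phase would hurt. The Abel product is rotation invariant but the deficit hypothesis is not. For traceless phases, a common shift $\phi$ turns the bound $\frac1D\sum\theta_j^2\le s^2$ into $s^2+\phi^2$.

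Two bookkeeping corrections remain. First, there is no $\sqrt D$ rescaling: $\Sigma_D$ is exactly the eigenvalue sphere of $\|\cdot\|_0$-unit vectors. So the phases fed to Lemma~\ref{lem:abel} are $\theta_i=y\lambda_i$, matching the product $\prod_{i<j}K_\rho(y(\lambda_i-\lambda_j))^2$ you wrote one sentence earlier. With $\theta_i=y\lambda_i/\sqrt D$ the lemma would bound a different product.

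Second, the bad-event tail carries the factor $e^{E_J+E_{\rm low}}$ in addition to $N\omega_N/\Vol_0\PU(D)$, because those two comparisons precede the transfer. Since $\rho^{-1}=O(D^{1/2})$, this factor is $e^{O(MD^2)}$, which is still $e^{o(D^{5/2})}$. This, together with $\varepsilon_D=o(1)$, is where the hypothesis $\Delta\ge D^{-1/4}$ is really used.
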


\begin{proof}
We apply the successive comparisons to the exact polar identity
\eqref{eq:polar-normalized}.  Proposition~\ref{prop:actual-frozen} first
bounds the Jacobi determinant by the regularized determinant of the
constant-coefficient model.
Thus, for every $0<s\le x$,
\begin{equation}\label{eq:uniform-step-Jacobi}
 \mathbf1_{\{sD<c(X)\}}|\det\widehat J(sD,X)|
 \le e^{E_J}
 \mathbf1_{\{sD<c(X)\}}
 \det_{\rho,s}\widehat J_0(sD,X),
\end{equation}
where
\begin{equation}\label{eq:EJ-definition}
 E_J\le C\left(\rho^{-1}MD^{3/2}+MD^{3/2}\right)
\end{equation}
uniformly for $s\le x_\star$.  By
\eqref{eq:regularized-frozen-product}, after conditioning on the beta
variable $z$ in \eqref{eq:Hsplit}, this regularized determinant is the
product $\mathcal R_{s,\rho}(H)$ defined in
\eqref{eq:root-functional-general}.  We next remove the low-weight component,
transfer the high-weight average to the full Hermitian sphere, and then apply Weyl
integration and the Abel estimate.

Proposition~\ref{prop:phase-range} gives
$s\diam\spec(H)<2\pi$ on the minimizing domain.  Apply
Lemma~\ref{lem:cheap-removal} with
$y=s\sqrt{1-z}$.  It removes the low-weight term at logarithmic cost
\begin{equation}\label{eq:Echeap-definition}
 E_{\rm low}\le C\rho^{-1}D\sqrt M
\end{equation}
and increases the phase range by at most
$2x_\star\sqrt M/D$.  Proposition~\ref{prop:transfer} then compares this
average with the
full Hermitian-sphere average at logarithmic cost
\begin{equation}\label{eq:Etransfer-definition}
 E_{\rm transfer}\le M\sqrt D+\varepsilon_D,
 \qquad
 \varepsilon_D=Cx_\star\rho^{-1}D^2\delta_D=o(1).
\end{equation}
Because of \eqref{eq:total-phase-enlargement}, the resulting full-sphere
integrand has phase diameter at most the fixed $L_\star$ for large $D$.
The assumption $\Delta\ge D^{-1/4}$ implies that $\rho^{-1}$ is polynomially
bounded, so the uniform $o(1)$ assertion for $\varepsilon_D$ applies.

Let $K$ now denote a full Hermitian Hilbert--Schmidt unit vector and let
$\lambda\in\Sigma_D$ be its eigenvalue vector.  Under the spherical Weyl
formula \eqref{eq:spherical-Weyl-formula}, the squared Vandermonde density
cancels the root denominators by \eqref{eq:weyl-cancel}.  Since
$y=s\sqrt{1-z}$,
\begin{equation}\label{eq:second-moment-bound}
 \frac1D\sum_i\Arg(e^{iy\lambda_i})^2
 \le\frac1D\sum_i(y\lambda_i)^2
 =y^2\le s^2\le x^2
 \le\frac{\pi^2}{3}-\Delta.
\end{equation}
Thus Lemma~\ref{lem:abel}, with $L=L_\star$, gives the pointwise estimate
\begin{equation}\label{eq:uniform-Abel-bound}
 \prod_{i<j}K_\rho(y(\lambda_i-\lambda_j))^2
 \le
 \exp\{-c\Delta^2D^2+CD\log(1/\Delta)\}.
\end{equation}
The Cartan singular values contribute $s^{D-1}$.  Therefore the full-sphere
average, after Weyl cancellation, is bounded by
\begin{equation}\label{eq:full-average-bound}
 \frac{s^{D-1}}{\mathcal Z_D}
 (1-z)^{-P_D}
 \exp\{-c\Delta^2D^2+CD\log(1/\Delta)\}.
\end{equation}

The remaining integrations are scalar.  The beta density in \eqref{eq:beta},
multiplied by $(1-z)^{-P_D}$, has integral
\begin{align}
 &\frac1{B(M/2,(N-M)/2)}
 \int_0^1
 z^{M/2-1}(1-z)^{(N-M)/2-1-P_D}\,dz
 \notag\\
 &\hspace{35mm}
 =\frac{B(M/2,(D-1-M)/2)}
 {B(M/2,(N-M)/2)}
 =\mathfrak B_D.
 \label{eq:beta-ratio}
\end{align}
Since $(N-M)/2-P_D=(D-1-M)/2$, the beta integral is finite for all
sufficiently large $D$.  The radial integral is
\begin{equation}\label{eq:s-int}
 \int_0^x s^{D-1}\frac{ds}{s}
 =\frac{x^{D-1}}{D-1}.
\end{equation}
Finally, the polar prefactor together with $\mathcal Z_D^{-1}$ is precisely
$\mathfrak C_D$ by \eqref{eq:exact-collapse}.  Combining
\eqref{eq:EJ-definition}, \eqref{eq:Echeap-definition},
\eqref{eq:Etransfer-definition}, and \eqref{eq:uniform-Abel-bound}, and using
$\rho^{-1}=O(\Delta^{-2})$, gives
\eqref{eq:uniform-error-estimate}.

For the bad Gaussian event, Proposition~\ref{prop:transfer} retains the
factor $s^N$.  For suitable absolute constants $c_1,c_2>0$, write
\[
 p_D=e^{-c_1D^2\sqrt D}+e^{-c_2e^{\sqrt D}}.
\]
Its contribution is bounded by
\begin{align}
 \mathfrak T_D
 &\le
 \frac{N\omega_N}{\Vol_0\PU(D)}
 \exp\{E_J+E_{\rm low}\}
 p_D\int_0^{x_\star}s^{N-1}\,ds \notag\\
 &=
 \frac{N\omega_N}{\Vol_0\PU(D)}
 \exp\{E_J+E_{\rm low}\}
 p_D\frac{x_\star^N}{N}.
 \label{eq:bad-tail-full-bound}
\end{align}
Under \eqref{eq:uniform-hypotheses},
$\rho^{-1}=O(D^{1/2})$, and hence
\[
 E_J=O(MD^2),\qquad
 E_{\rm low}=O(D^{3/2}\sqrt M).
\]
The exact group-volume formula and the Euclidean sphere-area formula give
\[
 \log\frac{N\omega_N}{\Vol_0\PU(D)}=O(D^2\log D),
 \qquad
 N\log x_\star=O(D^2).
\]
Because $M=O((\log D)^2)$, every positive term in the logarithm of
\eqref{eq:bad-tail-full-bound} is $o(D^{5/2})$, whereas the first summand in
$p_D$ is $e^{-c_1D^{5/2}}$ and the second is smaller.  After decreasing the
absolute constant, this proves \eqref{eq:uniform-tail}.
\end{proof}

The exact scalar factors satisfy
\begin{equation}\label{eq:scalar-factor-asymptotics}
 \log\mathfrak C_D=-D\log D+O(D),
 \qquad
 \log\mathfrak B_D
 =\frac M2\log D+O\left(M+\frac{M^2}{D}\right).
\end{equation}
The first estimate follows from Stirling's formula.  For the second, write
$a=M/2$, $b=(N-M)/2$, and $c=(D-1-M)/2$ and use, uniformly for $a=o(t)$,
\[
 \log\Gamma(t+a)-\log\Gamma(t)
 =a\log t+O(a^2/t+a/t).
\]
Uniformly for $x$ in every compact subinterval of $(0,x_\star]$,
\begin{equation}\label{eq:radial-factor-asymptotic}
 \log\frac{x^{D-1}}{D-1}=O(D).
\end{equation}

\begin{proposition}\label{prop:fixed}
For every fixed $0<x<\pi/\sqrt3$, there are constants $c_x>0$ and $D_x$ such
that, for every power of two $D\ge D_x$,
\begin{equation}\label{eq:fixed-main}
 \mu_D\!\left(B_A([I],xD)\right)\le e^{-c_xD^2}.
\end{equation}
\end{proposition}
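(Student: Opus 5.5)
The plan is to apply Proposition~\ref{prop:uniform-estimate} with a fixed deficit, or, more cleanly, to rerun its proof with $\Delta$ fixed and use the fixed-$\Delta$ clause of Lemma~\ref{lem:abel}. Fix $0<x<x_\star$ and set $\Delta_x:=\min\{\pi^2/3-x^2,\Delta_0\}>0$. Then $x^2\le\pi^2/3-\Delta_x$ and $D^{-1/4}\le\Delta_x\le\Delta_0$ once $D$ is large, so the hypotheses \eqref{eq:uniform-hypotheses} hold. The regularizer $\rho\asymp\Delta_x^2$ is then a fixed positive constant depending only on $x$.

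Proposition~\ref{prop:uniform-estimate} gives
\[
 \mu_D\bigl(B_A([I],xD)\bigr)\le\mathfrak C_D\mathfrak B_D\frac{x^{D-1}}{D-1}\exp\{-c\Delta_x^2D^2+\mathcal E_D(\Delta_x)\}+\mathfrak T_D .
\]
I will bound each factor on the logarithmic scale.

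\begin{itemize}
\item By \eqref{eq:scalar-factor-asymptotics}, $\log\mathfrak C_D=-D\log D+O(D)$. With $M=O((\log D)^2)$, we also get $\log\mathfrak B_D=O(M\log D)=O((\log D)^3)$.
\item By \eqref{eq:radial-factor-asymptotic}, the radial factor $\log(x^{D-1}/(D-1))$ is $O_x(D)$.
\item With $\Delta_x$ fixed, \eqref{eq:uniform-error-estimate} gives $\mathcal E_D(\Delta_x)\le C_x\bigl(MD^{3/2}+D\sqrt M+M\sqrt D+D\bigr)=O_x(D^{3/2}(\log D)^2)=o(D^2)$.
\end{itemize}

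So the first term is at most $\exp\{-c\Delta_x^2D^2+o(D^2)\}\le\exp\{-\tfrac c2\Delta_x^2D^2\}$ for $D\ge D_x$. The tail satisfies $\mathfrak T_D\le e^{-cD^{5/2}}$, which is smaller still. Adding the two terms and lowering the constant gives \eqref{eq:fixed-main} with $c_x=\tfrac c4\Delta_x^2$.

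The one point that needs care is that Proposition~\ref{prop:uniform-estimate} is stated only for $\Delta\le\Delta_0$, which is why I capped the deficit at $\Delta_0$. Since $x^2\le\pi^2/3-\Delta_x$ still holds under the cap, nothing is lost. I expect no real obstacle, because every error term is $o(D^2)$ at fixed $\Delta$. The one thing I will check explicitly is that the $D^2$ coefficient in the main exponent is a fixed negative number, uniform for all $D\ge D_x$. This holds because $c$ is absolute and $\Delta_x$ depends only on $x$.
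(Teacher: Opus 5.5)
Your proposal is correct and follows the paper's proof essentially verbatim. The paper uses the same capped deficit $\Delta_x=\min\{\Delta_0,\pi^2/3-x^2\}$, applies Proposition~\ref{prop:uniform-estimate}, and checks that $\mathcal E_D(\Delta_x)$ and the logarithms of the beta, radial, and normalization factors are all $o(D^2)$. It then absorbs the $e^{-cD^{5/2}}$ tail and obtains the same constant $c_x=c\Delta_x^2/4$.
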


\begin{proof}
Fix $0<x<x_\star$ and set
\begin{equation}\label{eq:Delta-x}
 \Delta_x
 =\min\left\{\Delta_0,\frac{\pi^2}{3}-x^2\right\}>0.
\end{equation}
For large $D$, the hypotheses of Proposition~\ref{prop:uniform-estimate} hold
with $\Delta=\Delta_x$.  Since $M=O((\log D)^2)$,
\[
 \mathcal E_D(\Delta_x)=O_x(MD^{3/2}+D\log D)=o(D^2).
\]
The logarithms of the exact beta, radial, and normalization factors are also
$o(D^2)$ by \eqref{eq:scalar-factor-asymptotics} and
\eqref{eq:radial-factor-asymptotic}.  Hence there is $D_x$ such that, for all
powers of two $D\ge D_x$, the first term in
\eqref{eq:uniform-integrated-bound} is at most
\[
 \exp\left\{-\frac{c\Delta_x^2}{2}D^2\right\}.
\]
The tail $\mathfrak T_D\le e^{-cD^{5/2}}$ is smaller after increasing $D_x$.
Absorbing the sum of the two exponentials into the constant and setting, for
example, $c_x=c\Delta_x^2/4$, gives
\[
 \mu_D\!\left(B_A([I],xD)\right)\le e^{-c_xD^2},
\]
as claimed.
\end{proof}

\begin{proposition}\label{prop:calibrated}
There are positive absolute constants
\[
 A_*,\qquad c_*,\qquad C_*,\qquad D_*
\]
with the following property.  Set
\begin{equation}\label{eq:delta-calibrated}
 \widetilde\Delta_D=\left(\frac{A_*M}{D^{1/2}}\right)^{1/4},
 \qquad
 \widetilde x_D=\sqrt{\frac{\pi^2}{3}-\widetilde\Delta_D}.
\end{equation}
Then, for every power of two $D\ge D_*$,
\begin{equation}\label{eq:calibrated-volume}
 \mu_D\!\left(B_A([I],\widetilde x_DD)\right)
 \le \exp\{-c_*M^{1/2}D^{7/4}\}.
\end{equation}
Consequently, a Haar-random $[U]$ satisfies
\begin{equation}\label{eq:calibrated-distance}
 d_A([I],[U])\ge
 \left(\frac\pi{\sqrt3}
 -C_*D^{-1/8}(\log D)^{1/2}\right)D
\end{equation}
with probability at least
\begin{equation}\label{eq:calibrated-probability}
 1-\exp\{-c_*M^{1/2}D^{7/4}\}.
\end{equation}
\end{proposition}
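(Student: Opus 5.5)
The plan is to apply Proposition~\ref{prop:uniform-estimate} with $\Delta=\widetilde\Delta_D$ and $x=\widetilde x_D$, then balance the main negative term against the error terms by choosing $A_*$ large.

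First check the hypotheses \eqref{eq:uniform-hypotheses}. Since $M\asymp(\log D)^2$, we have $\widetilde\Delta_D\asymp D^{-1/8}(\log D)^{1/2}$. For $D\ge D_*$ this lies between $D^{-1/4}$ and $\Delta_0$. By construction $\widetilde x_D^2=\pi^2/3-\widetilde\Delta_D$, and $\widetilde x_D\le x_\star$. The proposition then bounds the measure by
\[
\mathfrak C_D\mathfrak B_D\frac{\widetilde x_D^{D-1}}{D-1}\exp\{-c\widetilde\Delta_D^2D^2+\mathcal E_D(\widetilde\Delta_D)\}+\mathfrak T_D .
\]

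The key computation is the size of each term. The main term is
\[
c\widetilde\Delta_D^2D^2=cA_*^{1/2}M^{1/2}D^{7/4}.
\]
The dominant error term is
\[
C\widetilde\Delta_D^{-2}MD^{3/2}=CA_*^{-1/2}M^{1/2}D^{7/4}.
\]
The remaining errors in \eqref{eq:uniform-error-estimate} are all $o(M^{1/2}D^{7/4})$:
\begin{itemize}
\item $MD^{3/2}$;
\item $\widetilde\Delta_D^{-2}D\sqrt M=O(D^{5/4})$, up to logarithmic factors;
\item $M\sqrt D$;
\item $D\log(1/\widetilde\Delta_D)=O(D\log D)$.
\end{itemize}
The scalar factors contribute $O(D\log D)$ to the logarithm, by \eqref{eq:scalar-factor-asymptotics} together with $\log(\widetilde x_D^{D-1}/(D-1))=O(D)$, since $\widetilde x_D$ stays in a compact subinterval of $(0,x_\star]$.

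Choose $A_*$ so that $cA_*^{1/2}\ge 2CA_*^{-1/2}+2$, for instance $A_*=\max\{(4C/c),(4/c)^2\}$. Then the first term is at most $\exp\{-M^{1/2}D^{7/4}\}$ for large $D$. The tail satisfies $\mathfrak T_D\le e^{-cD^{5/2}}$, which is much smaller. Summing the two and shrinking the constant gives \eqref{eq:calibrated-volume} with, say, $c_*=1/2$.

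For the distance statement, note that $\sqrt{a-b}\ge\sqrt a-b/\sqrt a$ for $0\le b\le a$. Hence
\[
\widetilde x_D\ge \pi/\sqrt3-\sqrt3\,\widetilde\Delta_D/\pi\ge \pi/\sqrt3-C_*D^{-1/8}(\log D)^{1/2},
\]
using $M\le C(\log D)^2$. The event that \eqref{eq:calibrated-distance} fails is contained in $\{d_A<\widetilde x_DD\}=B_A([I],\widetilde x_DD)$. By \eqref{eq:haar-definition} its Haar probability is at most $\exp\{-c_*M^{1/2}D^{7/4}\}$.

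The main obstacle is the uniformity bookkeeping. The constants $c,C$ of Proposition~\ref{prop:uniform-estimate} must be absolute, and they are stated to be, so that $A_*$ can be fixed independently of $D$. It must also be checked that $\widetilde\Delta_D\le\Delta_0$ and $\widetilde\Delta_D\ge D^{-1/4}$ hold from some absolute $D_*$ onward, which follows from $M\asymp(\log D)^2$.
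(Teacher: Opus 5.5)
Your proposal is correct and follows the paper's proof. You apply Proposition~\ref{prop:uniform-estimate} at $\Delta=\widetilde\Delta_D$ and balance $c\widetilde\Delta_D^2D^2=cA_*^{1/2}M^{1/2}D^{7/4}$ against $C\widetilde\Delta_D^{-2}MD^{3/2}=CA_*^{-1/2}M^{1/2}D^{7/4}$ by taking $A_*$ large. You absorb the remaining errors and scalar factors as $o(M^{1/2}D^{7/4})$ and convert via $\pi/\sqrt3-\widetilde x_D\le(\sqrt3/\pi)\widetilde\Delta_D$; your explicit choice of $A_*$ and your check of $D^{-1/4}\le\widetilde\Delta_D\le\Delta_0$ spell out what the paper leaves implicit.
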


\begin{proof}
Let
\[
 \Delta=\left(\frac{aM}{D^{1/2}}\right)^{1/4}
\]
with $a>0$ fixed.  The two leading terms in the uniform estimate have the form
\begin{equation}\label{eq:calibrated-leading-terms}
 -c_{\rm A}\Delta^2D^2
 =-c_{\rm A}a^{1/2}M^{1/2}D^{7/4},
\end{equation}
\begin{equation}\label{eq:calibrated-comparison-term}
 C_{\rm J}\Delta^{-2}MD^{3/2}
 =C_{\rm J}a^{-1/2}M^{1/2}D^{7/4},
\end{equation}
where $c_{\rm A},C_{\rm J}>0$ are absolute constants furnished by
Proposition~\ref{prop:uniform-estimate}.  Choose $a=A_*$ so large that
\[
 c_{\rm A}A_*^{1/2}-C_{\rm J}A_*^{-1/2}>0.
\]
All remaining terms in \eqref{eq:uniform-error-estimate} and all scalar
normalization terms are $o(M^{1/2}D^{7/4})$.  Thus, after fixing $A_*$, there
are absolute constants $c_*>0$ and $D_*$ such that
\[
 \mu_D\!\left(B_A([I],\widetilde x_DD)\right)
 \le e^{-c_*M^{1/2}D^{7/4}}
\]
for every power of two $D\ge D_*$.  Finally,
\[
 0<\frac\pi{\sqrt3}-\widetilde x_D
 \le\frac{\sqrt3}{\pi}\widetilde\Delta_D
 =\frac{\sqrt3}{\pi}A_*^{1/4}D^{-1/8}M^{1/4}.
\]
Since $M^{1/4}\le C(\log D)^{1/2}$, enlarging an absolute constant $C_*$
gives \eqref{eq:calibrated-distance}.  The same volume estimate gives the
failure-probability bound in \eqref{eq:calibrated-probability}.
\end{proof}

\begin{corollary}\label{cor:diagonal}
Let
\begin{equation}\label{eq:delta-diag}
 \Delta_D=\left(\frac{M\log\log D}{D^{1/2}}\right)^{1/4},
 \qquad
 x_D=\sqrt{\frac{\pi^2}{3}-\Delta_D}.
\end{equation}
There are absolute constants $c_{\rm d},C_{\rm d}>0$ and $D_{\rm d}$ such that, for every power
of two $D\ge D_{\rm d}$,
\begin{equation}\label{eq:diag-volume}
 \mu_D\!\left(B_A([I],x_DD)\right)
 \le \exp\{-c_{\rm d}(M\log\log D)^{1/2}D^{7/4}\}.
\end{equation}
Consequently, a Haar-random $[U]$ satisfies
\begin{equation}\label{eq:distance-main}
 d_A([I],[U])\ge
 \left(\frac\pi{\sqrt3}
 -C_{\rm d}D^{-1/8}(\log D)^{1/2}(\log\log D)^{1/4}\right)D
\end{equation}
with probability at least
\begin{equation}\label{eq:diagonal-probability}
 1-\exp\{-c_{\rm d}(M\log\log D)^{1/2}D^{7/4}\}.
\end{equation}
\end{corollary}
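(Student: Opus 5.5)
The plan is to rerun the proof of Proposition~\ref{prop:calibrated} with the deficit $\Delta=\Delta_D$ in place of $(A_*M/D^{1/2})^{1/4}$, and to use the extra factor $\log\log D\to\infty$ in place of the large constant $A_*$. First I will check the hypotheses of Proposition~\ref{prop:uniform-estimate} for $\Delta=\Delta_D$ and $x=x_D$. Since $M\asymp(\log D)^2$, we have $\Delta_D\asymp D^{-1/8}(\log D)^{1/2}(\log\log D)^{1/4}$. This lies in $[D^{-1/4},\Delta_0]$ for all large $D$, and by definition $x_D^2=\pi^2/3-\Delta_D$. Hence \eqref{eq:uniform-integrated-bound} applies with $\rho\asymp\Delta_D^2$.

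Next I compare the two leading terms in the exponent. The Abel gain is $-c\Delta_D^2D^2=-c(M\log\log D)^{1/2}D^{7/4}$. The Jacobi error is $C\Delta_D^{-2}MD^{3/2}=C M^{1/2}(\log\log D)^{-1/2}D^{7/4}$. Its ratio to the gain is $O(1/\log\log D)\to0$, so no large constant has to be tuned; the gain dominates automatically for large $D$.

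The remaining contributions are also negligible against $(M\log\log D)^{1/2}D^{7/4}$:
\begin{itemize}
\item $MD^{3/2}$;
\item $\Delta_D^{-2}D\sqrt M=O(D^{5/4})$;
\item $M\sqrt D$;
\item $D\log(1/\Delta_D)=O(D\log D)$;
\item the scalar factors from \eqref{eq:scalar-factor-asymptotics} and \eqref{eq:radial-factor-asymptotic}, which are $O(D\log D)$. For the radial factor, $x_D$ stays in a compact subinterval of $(0,x_\star]$, so \eqref{eq:radial-factor-asymptotic} applies.
\end{itemize}
The tail $\mathfrak T_D\le e^{-cD^{5/2}}$ is far smaller than the main term. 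Together these give \eqref{eq:diag-volume} with, for example, $c_{\rm d}=c/2$.

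For the distance statement, I use $\sqrt{a}-\sqrt{a-\Delta}\le\Delta/\sqrt a$ to get $\pi/\sqrt3-x_D\le(\sqrt3/\pi)\Delta_D$. Substituting $M^{1/4}\le C(\log D)^{1/2}$ then gives \eqref{eq:distance-main}, and the probability bound \eqref{eq:diagonal-probability} is just \eqref{eq:diag-volume} read as a Haar measure. The only step I expect to need care is checking that the constants in Proposition~\ref{prop:uniform-estimate} stay uniform as $\Delta_D\to0$; that proposition is stated uniformly over $\Delta\in[D^{-1/4},\Delta_0]$, so I expect this to go through.
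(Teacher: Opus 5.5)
Your proposal is correct and follows essentially the same route as the paper: you apply Proposition~\ref{prop:uniform-estimate} at $\Delta=\Delta_D$ and note that the Abel gain $\Delta_D^2D^2$ exceeds the Jacobi error $\Delta_D^{-2}MD^{3/2}$ by a factor of order $\log\log D$, with every other term of lower order. You then convert the ball estimate into the distance bound using $\pi/\sqrt3-x_D\le(\sqrt3/\pi)\Delta_D$ and $M^{1/4}\le C(\log D)^{1/2}$.
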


\begin{proof}
For this choice of $\Delta_D$, the hypotheses of
Proposition~\ref{prop:uniform-estimate} hold for all sufficiently large $D$.
The negative term and the largest comparison error have respective sizes
\[
 \Delta_D^2D^2=(M\log\log D)^{1/2}D^{7/4},
 \qquad
 \Delta_D^{-2}MD^{3/2}=(M/\log\log D)^{1/2}D^{7/4}.
\]
Their ratio is $\log\log D$, and all other terms in
\eqref{eq:uniform-integrated-bound} are lower order.  This proves
\eqref{eq:diag-volume}.  Finally,
\[
 0<\frac\pi{\sqrt3}-x_D
 =\frac{\Delta_D}{\pi/\sqrt3+\sqrt{\pi^2/3-\Delta_D}}
 \le\frac{\sqrt3}{\pi}\Delta_D,
\]
and $M=O((\log D)^2)$ gives \eqref{eq:distance-main} with the probability
in \eqref{eq:diagonal-probability}.
\end{proof}

\section{CUE estimates and the distance threshold}\label{sec:cue-upper}

\subsection{Second moments and logarithmic paths}

Let $U$ be Haar distributed on $U(D)$ and choose principal eigenangles
$\theta_1,\ldots,\theta_D\in(-\pi,\pi]$.  Its projective class $[U]$ is Haar
distributed on $\PU(D)$.  Define the empirical principal-angle second moment
\begin{equation}\label{eq:CUE-m2}
 m_2(U)=\frac1D\sum_{j=1}^D\theta_j^2.
\end{equation}

\begin{proposition}\label{prop:CUE-m2}
There are absolute constants $\Delta_{\rm C},c_{\rm C}>0$ such that, for all
sufficiently large $D$ and every
\[
 D^{-1/4}\le\Delta\le\Delta_{\rm C},
\]
one has
\begin{equation}\label{eq:CUE-m2-tail}
 \Prob\!\left\{
 \left|m_2(U)-\frac{\pi^2}{3}\right|\ge\Delta
 \right\}
 \le \exp\{-c_{\rm C}\Delta^2D^2\}.
\end{equation}
For every fixed $\Delta>0$, the same conclusion holds in the form
$e^{-c_\Delta D^2}$ for all sufficiently large $D$.
\end{proposition}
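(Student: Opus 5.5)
We bound the probability directly from the CUE joint eigenvalue density and the Abel estimate of Lemma~\ref{lem:abel}. The joint density of the eigenangles $\theta_1,\ldots,\theta_D\in(-\pi,\pi]$ of a Haar unitary is
\[
 \frac{1}{(2\pi)^DD!}\prod_{i<j}|e^{i\theta_i}-e^{i\theta_j}|^2
 =\frac{1}{(2\pi)^DD!}\prod_{i<j}\bigl(2|\sin((\theta_i-\theta_j)/2)|\bigr)^2 .
\]
The Weyl normalization makes this a probability density. It therefore suffices to bound the chord product pointwise on the event
\[
 E_\Delta=\Bigl\{\bigl|m_2-\pi^2/3\bigr|\ge\Delta\Bigr\},
\]
and then integrate over the box $(-\pi,\pi]^D$, whose volume $(2\pi)^D$ cancels the prefactor up to $1/D!$.

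The key step is the pointwise bound on $E_\Delta$. The principal angles satisfy $\max_{i,j}|\theta_i-\theta_j|<2\pi<L_\star$, and $\Arg(e^{i\theta_j})=\theta_j$, so the hypotheses \eqref{eq:abel-hyp} of Lemma~\ref{lem:abel} hold with $L=L_\star$. Since $2|\sin(t/2)|\le K_\rho(t)$ for every $\rho>0$, Lemma~\ref{lem:abel} with $h=c_0\Delta^2$ gives, for $\Delta\le\Delta_{\rm C}$ small,
\[
 \prod_{i<j}|e^{i\theta_i}-e^{i\theta_j}|^2
 \le\prod_{i<j}K_\rho(\theta_i-\theta_j)^2
 \le\exp\{-c_1\Delta^2D^2+CD\log(1/\Delta)\}.
\]
Both signs of the deviation are covered, because the lemma uses only the absolute value of the Fourier functional. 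Integrating over $E_\Delta$ then yields
\[
 \Prob(E_\Delta)\le\frac{1}{D!}\exp\{-c_1\Delta^2D^2+CD\log(1/\Delta)\}.
\]
Dropping $1/D!\le1$ is harmless; we keep nothing from it.

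The main obstacle is showing that the error $CD\log(1/\Delta)$ is dominated by $\Delta^2D^2$ uniformly down to $\Delta=D^{-1/4}$. There $\log(1/\Delta)\le\tfrac14\log D$, so the error is $O(D\log D)$, while $\Delta^2D^2\ge D^{3/2}$. For large $D$ the error is therefore at most $\tfrac{c_1}{2}\Delta^2D^2$, and we may take $c_{\rm C}=c_1/2$. It is worth checking that no hidden $\Delta$-dependent constant enters through the choice $\rho=(1-r)/L_\star$. This holds because $\rho$ appears only in the majorant, and the majorant bound is uniform in $\rho$ by the lemma.

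For fixed $\Delta>0$, we use the last sentence of Lemma~\ref{lem:abel}: with a fixed $\rho$ the exponent is $-c_1\Delta^2D^2+O_\Delta(D)$. Alternatively, since $E_\Delta\subseteq E_{\min\{\Delta,\Delta_{\rm C}\}}$, we can simply apply the small-$\Delta$ bound at $\min\{\Delta,\Delta_{\rm C}\}$. Either way we obtain $e^{-c_\Delta D^2}$ for large $D$.
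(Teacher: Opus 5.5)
Your proposal is correct and follows the paper's proof essentially step for step. You bound the CUE chord product pointwise on the deviation event via $2|\sin(t/2)|\le K_\rho(t)$ and Lemma~\ref{lem:abel} with $L=L_\star$, integrate over the cube to get the $\frac1{D!}$ prefactor, absorb $CD\log(1/\Delta)$ using $\Delta\ge D^{-1/4}$, and handle fixed $\Delta$ by applying the bound at $\min\{\Delta,\Delta_{\rm C}\}$.
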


\begin{proof}
Weyl integration on $U(D)$ gives the joint density of the principal
angles on $(-\pi,\pi]^D$:
\begin{equation}\label{eq:CUE-density}
 \frac{1}{(2\pi)^DD!}
 \prod_{i<j}|e^{i\theta_i}-e^{i\theta_j}|^2
 =\frac{1}{(2\pi)^DD!}
 \prod_{i<j}\left(2\left|\sin\frac{\theta_i-\theta_j}{2}\right|\right)^2.
\end{equation}
On the event in \eqref{eq:CUE-m2-tail}, the lifted phase range is at most
$2\pi<L_\star$, and the second hypothesis of Lemma~\ref{lem:abel} holds.
Since $2|\sin(t/2)|\le K_\rho(t)$, that lemma gives the pointwise bound
\[
 \prod_{i<j}|e^{i\theta_i}-e^{i\theta_j}|^2
 \le
 \exp\{-c_1\Delta^2D^2+CD\log(1/\Delta)\}.
\]
Integrating over the event and using that the ambient cube has volume
$(2\pi)^D$ yields
\begin{equation}\label{eq:CUE-tail-preabsorb}
 \Prob\!\left\{
 \left|m_2(U)-\frac{\pi^2}{3}\right|\ge\Delta
 \right\}
 \le
 \frac1{D!}
 \exp\{-c_1\Delta^2D^2+CD\log(1/\Delta)\}.
\end{equation}
If $\Delta\ge D^{-1/4}$, the positive remainder is
$o(\Delta^2D^2)$ uniformly in the stated range, which proves
\eqref{eq:CUE-m2-tail}.  For a fixed larger deviation, apply the estimate at
$\min\{\Delta,\Delta_{\rm C}\}$.
\end{proof}

\begin{proposition}\label{prop:log-path}
For every $U\in U(D)$ with principal eigenangles as above, the
centered principal logarithm defines a one-parameter subgroup path $\gamma_U$
from $[I]$ to $[U]$ such that
\begin{equation}\label{eq:log-path-upper}
 d_A([I],[U])\le\ell_A(\gamma_U)
 \le D\sqrt{m_2(U)}\le\pi D.
\end{equation}
\end{proposition}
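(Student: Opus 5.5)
Diagonalize $U=V\diag(e^{i\theta_1},\ldots,e^{i\theta_D})V^\dagger$ with principal eigenangles $\theta_j\in(-\pi,\pi]$. Set $\bar\theta=D^{-1}\sum_j\theta_j$ and
\[
 Y=iV\diag(\theta_1-\bar\theta,\ldots,\theta_D-\bar\theta)V^\dagger .
\]
Then $Y$ is traceless skew-Hermitian, so $Y\in\mathfrak g$, and $e^{Y}=e^{-i\bar\theta}U$. The path $\gamma_U(t)=[e^{tY}]$, $0\le t\le1$, is therefore a one-parameter subgroup from $[I]$ to $[U]$. Its right-trivialized velocity is $\dot\gamma\gamma^{-1}=Y$, which is constant, so
\[
 \ell_A(\gamma_U)=\|Y\|_A .
\]

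Bound this length next. Since $A=P+QP^\perp\le QI=D^2I$,
\[
 \|Y\|_A^2=\langle Y,AY\rangle_0\le D^2\|Y\|_0^2 .
\]
In the normalized Hilbert--Schmidt norm,
\[
 \|Y\|_0^2=\frac1D\sum_j(\theta_j-\bar\theta)^2=m_2(U)-\bar\theta^2\le m_2(U).
\]
This gives $\ell_A(\gamma_U)\le D\sqrt{m_2(U)}$, and $m_2(U)\le\pi^2$ because every $|\theta_j|\le\pi$. The first inequality, $d_A([I],[U])\le\ell_A(\gamma_U)$, holds by the definition of distance as an infimum of path lengths.

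There is no real obstacle here. The only points to check are that centering keeps $Y$ in $\mathfrak{pu}(D)$ while changing the endpoint only by a phase, which is invisible in $\PU(D)$, and that right-invariance makes the length of a one-parameter subgroup equal to the $A$-norm of its generator.
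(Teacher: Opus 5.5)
Your proposal is correct and follows essentially the same route as the paper. Both proofs center the principal logarithm to get a traceless generator, use the constant right-trivialized velocity together with $A\le D^2I$ to bound the length by $D$ times the centered Hilbert--Schmidt norm, drop the $\bar\theta^2$ term to reach $m_2(U)$, and finish with $|\theta_j|\le\pi$.
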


\begin{proof}
Write
\[
 U=V\diag(e^{i\theta_1},\ldots,e^{i\theta_D})V^*,
 \qquad
 \overline\theta=\frac1D\sum_j\theta_j,
\]
and set
\[
 K_U=V\diag(\theta_1-\overline\theta,\ldots,
             \theta_D-\overline\theta)V^*.
\]
Then $K_U$ is traceless and $e^{iK_U}=e^{-i\overline\theta}U$, so
$[e^{iK_U}]=[U]$.  Define
$\gamma_U:[0,1]\to\PU(D)$ by $\gamma_U(t)=[e^{itK_U}]$.  Its right-trivialized
velocity is the constant $iK_U$.  Since $A\le D^2I$,
\[
 \ell_A(\gamma_U)=\|iK_U\|_A
 \le D\|K_U\|_0.
\]
Moreover,
\[
 \|K_U\|_0^2
 =\frac1D\sum_j(\theta_j-\overline\theta)^2
 \le\frac1D\sum_j\theta_j^2=m_2(U).
\]
The final bound follows from $|\theta_j|\le\pi$.
\end{proof}

\begin{proof}[Proof of Theorem~\ref{thm:sharp}]
Let $\eta_D$ be as in \eqref{eq:eta-sharp}.  Since
$M\asymp(\log D)^2$, Proposition~\ref{prop:calibrated} implies, after changing
absolute constants,
\begin{equation}\label{eq:sharp-lower-tail}
 \Prob\!\left\{
 \frac{d_A([I],[U])}{D}
 <\frac{\pi}{\sqrt3}-C_-\eta_D
 \right\}
 \le e^{-c_-M^{1/2}D^{7/4}}.
\end{equation}
For the upper tail, $\eta_D\to0$ and $\eta_D\ge D^{-1/4}$ for all large $D$.
On the event
\[
 m_2(U)\le\frac{\pi^2}{3}+\eta_D,
\]
Proposition~\ref{prop:log-path} and concavity of the square root give
\[
 \frac{d_A([I],[U])}{D}
 \le\sqrt{\frac{\pi^2}{3}+\eta_D}
 \le\frac{\pi}{\sqrt3}+C_+\eta_D.
\]
Proposition~\ref{prop:CUE-m2} therefore yields
\begin{equation}\label{eq:sharp-upper-tail}
 \Prob\!\left\{
 \frac{d_A([I],[U])}{D}
 >\frac{\pi}{\sqrt3}+C_+\eta_D
 \right\}
 \le e^{-c_+\eta_D^2D^2}
 =e^{-c_+M^{1/2}D^{7/4}}.
\end{equation}
Combining \eqref{eq:sharp-lower-tail} and
\eqref{eq:sharp-upper-tail} proves \eqref{eq:sharp-concentration}.
The global bound \eqref{eq:log-path-upper} gives
$0\le d_A([I],[U])/D\le\pi$.  Hence convergence in probability together with
uniform boundedness implies convergence in every finite $L^p$.
\end{proof}

\subsection{Large deviations and ball probabilities}

Let $\mathcal P(\mathbb T)$ denote the probability measures on the unit
circle with the weak topology, and let $m_{\mathbb T}$ be normalized arc
length.  The following is
the Haar-unitary specialization of the large-deviation principle of Hiai and
Petz \cite{HiaiPetz}.

\begin{theorem}[Hiai--Petz]\label{thm:CUE-LDP}
Let $e^{i\theta_1},\ldots,e^{i\theta_D}$ be the eigenvalues of a Haar
unitary in $U(D)$, and set
\[
 L_D=\frac1D\sum_{j=1}^D\delta_{e^{i\theta_j}}.
\]
Here $\delta_z$ denotes the unit point mass at $z$.  The sequence
$(L_D)$ satisfies a large-deviation principle on $\mathcal P(\mathbb T)$ with
speed $D^2$ and good rate function
\begin{equation}\label{eq:CUE-rate-energy}
 \mathcal I(\nu)=
 -\iint_{\mathbb T^2}\log|z-w|\,d\nu(z)d\nu(w),
\end{equation}
with the value $+\infty$ when the logarithmic energy is not finite.  The
normalization is such that $\mathcal I(m_{\mathbb T})=0$.  In particular, for
every open set $G\subset\mathcal P(\mathbb T)$ and every closed set
$F\subset\mathcal P(\mathbb T)$,
\[
 \liminf_{D\to\infty}\frac1{D^2}\log\Prob\{L_D\in G\}
 \ge-\inf_{\nu\in G}\mathcal I(\nu),
\]
\[
 \limsup_{D\to\infty}\frac1{D^2}\log\Prob\{L_D\in F\}
 \le-\inf_{\nu\in F}\mathcal I(\nu).
\]
\end{theorem}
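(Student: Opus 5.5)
This is the Haar-unitary case of the Hiai--Petz large-deviation principle, so the plan is to specialize their general theorem rather than reprove it. Hiai and Petz treat unitary random matrices whose eigenangle density is proportional to
\[
 \exp\Bigl\{-D\sum_j Q(e^{i\theta_j})\Bigr\}\prod_{i<j}|e^{i\theta_i}-e^{i\theta_j}|^2 ,
\]
for a continuous potential $Q$ on $\mathbb T$. Taking $Q\equiv0$ gives exactly the Weyl density \eqref{eq:CUE-density}. Their theorem then yields a large-deviation principle for $L_D$ at speed $D^2$ with good rate function
\[
 \nu\mapsto-\iint\log|z-w|\,d\nu\,d\nu+\int Q\,d\nu-B_Q,
\]
where $B_Q$ is the minimum of the first two terms. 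I will verify three points in turn.

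\textbf{(i) Speed and normalization.} Their speed is stated for $\beta=2$ with $n^2$ scaling. Matching the two normalizations requires checking that the factor $\prod_{i<j}$ corresponds to $\beta=2$ and that the empirical measure carries weight $1/D$. Under these conventions the rate appears without an extra factor of $\tfrac12$.

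\textbf{(ii) The constant $B_0$.} With $Q=0$ we need the minimizer of the logarithmic energy on $\mathbb T$ and its minimal value. The Fourier expansion
\[
 -\log|1-e^{it}|=\sum_{k\ge1}\frac{\cos kt}{k}
\]
gives
\[
 -\iint\log|z-w|\,d\nu\,d\nu=\sum_{k\ge1}\frac{|\hat\nu(k)|^2}{k}\ge0 .
\]
Equality holds exactly when $\hat\nu(k)=0$ for every $k\neq0$, that is, for $\nu=m_{\mathbb T}$. So $B_0=0$ and $\mathcal I(m_{\mathbb T})=0$, which is the normalization claimed in the statement. The same expansion, which the Abel identity \eqref{eq:abel-identity} already exploits, shows that $\mathcal I$ is nonnegative, lower semicontinuous (as a supremum of continuous Abel truncations), and has compact sublevel sets because $\mathcal P(\mathbb T)$ is compact. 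This confirms goodness.

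\textbf{(iii) Open/closed bounds.} These are just the definition of the large-deviation principle.

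\textbf{Main obstacle.} The only real risk is a mismatch of normalizations between the paper's statement and Hiai--Petz's (speed $n^2$ versus $n^2/2$, and the sign and constant convention in their rate function). I will resolve it by computing $\log\Prob$ of a concrete event in both conventions; alternatively, checking consistency with the explicit partition function $(2\pi)^DD!$ suffices. That partition function shows the normalizing constant contributes only $O(D\log D)=o(D^2)$, which confirms $B_0=0$ independently.
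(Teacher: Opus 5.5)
Your proposal is correct and follows the paper's approach: the paper does not reprove this result either, but cites Hiai--Petz and specializes to the Haar case, i.e.\ the trivial potential $Q\equiv0$ with eigenangle density \eqref{eq:CUE-density}. Your extra checks are accurate and only make explicit the normalization the paper asserts: that $\beta=2$ gives speed $D^2$ with no factor $\tfrac12$, that the Abel/Fourier expansion makes $m_{\mathbb T}$ the unique minimizer with energy $0$, and that the partition function $(2\pi)^D D!$ contributes only $o(D^2)$.
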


\begin{lemma}
\label{lem:CUE-energy-Fourier}
For $\nu\in\mathcal P(\mathbb T)$ and $0<r<1$, set
\[
 \mathcal I_r(\nu)
 =-\iint_{\mathbb T^2}\log|1-rz\overline w|\,d\nu(z)d\nu(w).
\]
Then
\begin{equation}\label{eq:CUE-Abel-energy}
 \mathcal I_r(\nu)
 =\sum_{k\ge1}\frac{r^k}{k}|\widehat\nu_k|^2,
 \qquad
 \widehat\nu_k=\int_{\mathbb T}z^k\,d\nu(z).
\end{equation}
If $\mathcal I(\nu)<\infty$, then
\begin{equation}\label{eq:CUE-rate-Fourier}
 \mathcal I(\nu)
 =\lim_{r\uparrow1}\mathcal I_r(\nu)
 =\sum_{k\ge1}\frac{|\widehat\nu_k|^2}{k}.
\end{equation}
\end{lemma}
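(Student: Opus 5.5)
We first prove the Abel identity \eqref{eq:CUE-Abel-energy} for each fixed $0<r<1$, and then let $r\uparrow1$ to obtain \eqref{eq:CUE-rate-Fourier}.

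\emph{Step 1: the identity for fixed $r$.} For $|rz\overline w|=r<1$ we expand
\[
 -\log|1-rz\overline w|=-\Re\log(1-rz\overline w)=\Re\sum_{k\ge1}\frac{r^k}{k}z^k\overline w^k .
\]
The series converges uniformly on $\mathbb T^2$, because it is dominated by $\sum_k r^k/k=-\log(1-r)$. We may therefore integrate term by term against $\nu\otimes\nu$. Each term gives
\[
 \iint z^k\overline w^k\,d\nu(z)\,d\nu(w)=\widehat\nu_k\overline{\widehat\nu_k}=|\widehat\nu_k|^2,
\]
which is real, so taking the real part changes nothing. This proves \eqref{eq:CUE-Abel-energy}. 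The same computation is the continuum version of the discrete Abel identity \eqref{eq:abel-identity}.

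\emph{Step 2: the limit $r\uparrow1$.} The right-hand side of \eqref{eq:CUE-Abel-energy} has nonnegative terms that increase in $r$. By monotone convergence it therefore tends to $\sum_k|\widehat\nu_k|^2/k\in[0,\infty]$. It remains to show that $\mathcal I_r(\nu)\to\mathcal I(\nu)$ when $\mathcal I(\nu)<\infty$. We do this pointwise on the kernels and then apply a domination argument.

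Pointwise, $|1-rz\overline w|\to|z-w|$ as $r\uparrow1$, because $|1-z\overline w|=|z-w|$ on $\mathbb T$.

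For the domination we use an elementary inequality. If $\zeta=z\overline w\in\mathbb T$ and $\frac12\le r<1$, then
\[
 |1-r\zeta|\ge\tfrac12|1-\zeta|.
\]
This holds because $|1-r\zeta|^2=(1-r)^2+r|1-\zeta|^2$ and $r\ge\frac12$, which gives $|1-r\zeta|^2\ge\frac14|1-\zeta|^2$. Combined with the trivial upper bound $|1-r\zeta|\le2$, it yields
\[
 \bigl|\log|1-rz\overline w|\bigr|\le\bigl|\log|z-w|\bigr|+\log 4 .
\]

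The main obstacle is that $\mathcal I(\nu)<\infty$ bounds only the signed integral $-\iint\log|z-w|$, whereas dominated convergence needs $\iint\bigl|\log|z-w|\bigr|<\infty$. This follows because $\log|z-w|\le\log2$ on $\mathbb T^2$. The positive part of $\log|z-w|$ is therefore bounded, and finiteness of the signed integral forces the negative part to be integrable as well. The function $\bigl|\log|z-w|\bigr|+\log4$ is thus $\nu\otimes\nu$-integrable.

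Dominated convergence then gives $\mathcal I_r(\nu)\to\mathcal I(\nu)$. Equating this with the monotone limit from the first part of Step 2 yields \eqref{eq:CUE-rate-Fourier}.
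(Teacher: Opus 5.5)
Your proof is correct and follows the paper's route: you integrate the uniformly convergent expansion of $-\log|1-re^{it}|$ term by term, note that the series increases monotonically in $r$, and identify its limit with $\mathcal I(\nu)$. Where the paper only invokes ``Abel convergence for the circular logarithmic kernel,'' you justify that step with the identity $|1-r\zeta|^2=(1-r)^2+r|1-\zeta|^2$ and dominated convergence; integrability of your dominating function also makes the diagonal $\nu\otimes\nu$-null, so the limit $-\infty$ of the kernel there causes no trouble.
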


\begin{proof}
For $0<r<1$ the uniformly convergent Fourier expansion
\[
 -\log|1-re^{it}|=\sum_{k\ge1}\frac{r^k}{k}\cos(kt)
\]
may be integrated against $\nu\otimes\nu$, which gives
\eqref{eq:CUE-Abel-energy}.  The right-hand side increases to the series in
\eqref{eq:CUE-rate-Fourier}.  Abel convergence for the circular logarithmic
kernel identifies this limit with \eqref{eq:CUE-rate-energy} whenever the
logarithmic energy is finite; if the series diverges, the energy is infinite.
\end{proof}

\begin{proof}[Proof of Theorem~\ref{thm:ball-probabilities}]
For $0<x<\pi/\sqrt3$, the small-ball upper bound is
Proposition~\ref{prop:fixed}.  To prove the probability lower bound, define
\[
 \vartheta(e^{i\theta})=\theta^2,
 \qquad \theta\in(-\pi,\pi],
\]
with $\vartheta(-1)=\pi^2$.  The one-sided limits at $-1$ agree, so
$\vartheta$ is continuous on $\mathbb T$.  Choose a probability measure
$\nu_x$ with finite logarithmic energy and
$\int\vartheta\,d\nu_x<x^2$; normalized arc length on a sufficiently short
symmetric arc is one example.  The set
\[
 G_x=\left\{\nu\in\mathcal P(\mathbb T):
             \int\vartheta\,d\nu<x^2\right\}
\]
is weakly open.  Theorem~\ref{thm:CUE-LDP} gives a finite constant $C_x$ such
that, for all sufficiently large $D$,
\begin{equation}\label{eq:CUE-lower-m2}
 \Prob\{m_2(U)<x^2\}=\Prob\{L_D\in G_x\}\ge e^{-C_xD^2}.
\end{equation}
Proposition~\ref{prop:log-path} gives the inclusion
\[
 \{m_2(U)<x^2\}\subseteq\{d_A([I],[U])<xD\}.
\]
Together with the upper bound, this proves \eqref{eq:two-sided-speed}, after
increasing $C_x$ if necessary so that $C_x\ge c_x$.

Now let $x>\pi/\sqrt3$.  If $x>\pi$, the complement of the ball is empty by
\eqref{eq:log-path-upper}.  At $x=\pi$ it has Haar measure zero, since
$m_2(U)<\pi^2$ almost surely.  For $\pi/\sqrt3<x<\pi$, the same strict
inclusion yields
\[
 \{d_A([I],[U])\ge xD\}\subseteq\{m_2(U)\ge x^2\}.
\]
Set $\Delta_x=x^2-\pi^2/3>0$.  Applying
Proposition~\ref{prop:CUE-m2} at $\min\{\Delta_x,\Delta_{\rm C}\}$ gives
\eqref{eq:upper-threshold}.  The limit in \eqref{eq:zero-one-threshold}
follows from the two bounds.
\end{proof}

\subsection{Rate bounds near the threshold}

\begin{proposition}
\label{prop:vanishing-regularization}
Fix $0<x<\pi/\sqrt3$ and put
\[
 \Delta_x=\frac{\pi^2}{3}-x^2.
\]
For
\begin{equation}\label{eq:vanishing-regularizer}
 h_D=M^{1/2}D^{-1/4},
 \qquad r_D=e^{-h_D},
 \qquad \rho_D=\frac{1-r_D}{L_\star},
\end{equation}
one has
\begin{equation}\label{eq:vanishing-regularization-conclusion}
 \liminf_{\substack{D\to\infty\\D=2^n}}
 -\frac1{D^2}\log\mu_D\!\left(B_A([I],xD)\right)
 \ge\frac{\Delta_x^2}{16\zeta(3)}.
\end{equation}
\end{proposition}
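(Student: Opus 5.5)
We rerun the chain of comparisons behind Proposition~\ref{prop:uniform-estimate}, with one change: the fixed regularizer $h=c_0\Delta^2$ is replaced by the vanishing $h_D=M^{1/2}D^{-1/4}$, and the pointwise estimate of Lemma~\ref{lem:abel} is replaced by the sharp-coefficient estimate of Lemma~\ref{lem:abel-sharp-coefficient}. Since $\rho_D\asymp h_D$, we have $\rho_D^{-1}\asymp M^{-1/2}D^{1/4}$. This is polynomially bounded in $D$, so the transfer error $\varepsilon_D$ in Proposition~\ref{prop:transfer} is still $o(1)$.

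\emph{Error terms.} With this regularizer the comparison errors become
\[
E_J\le C(\rho_D^{-1}MD^{3/2}+MD^{3/2})=O(M^{1/2}D^{7/4})=o(D^2),
\qquad
E_{\rm low}\le C\rho_D^{-1}D\sqrt M=O(D^{5/4}),
\]
and the transfer cost is $M\sqrt D+o(1)$. All of these are $o(D^2)$. The scalar factors $\mathfrak C_D$, $\mathfrak B_D$ and $x^{D-1}/(D-1)$ have logarithms $O(D\log D)$ by \eqref{eq:scalar-factor-asymptotics}. The Gaussian tail $\mathfrak T_D$ is still at most $e^{-cD^{5/2}}$, because the proof of \eqref{eq:uniform-tail} only used $\rho^{-1}=O(D^{1/2})$.

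\emph{Sharp Abel bound.} After Weyl cancellation, the integrand at parameters $(s,z)$ is $\prod_{i<j}K_{\rho_D}(y(\lambda_i-\lambda_j))^2$ with $y=s\sqrt{1-z}$. The phases $\theta_i=y\lambda_i$ have range at most $L_\star$, and they satisfy \eqref{eq:second-moment-bound}, so
\[
\frac1D\sum_i\Arg(e^{i\theta_i})^2\le y^2\le x^2=\frac{\pi^2}{3}-\Delta_x .
\]
Lemma~\ref{lem:abel-sharp-coefficient} applies because $h_D\downarrow0$ and $\log(1/h_D)=O(\log D)=o(D)$. It gives the bound $\exp\{-D^2(\Delta_x^2/(16\zeta(3))-o(1))\}$.

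\emph{Main obstacle.} That lemma is stated as a $\limsup$ for a single sequence of configurations, whereas we need the $o(1)$ to be uniform over all admissible configurations: every $\lambda\in\Sigma_D$, every $s\le x$ and every $z$. I would handle this by inspecting its proof rather than arguing by contradiction along a worst-case sequence (which also works, since for each $D$ one may pick a near-maximizing configuration). The only ingredients are:
\begin{itemize}
\item the Fourier lower bound $D\Delta_x/4$, which holds for every configuration;
\item the deterministic quantity $\sum_k(1-r_D^{k/2})/k^2=o(1)$;
\item Cauchy--Schwarz;
\item the errors $-\binom D2\log r_D$ and $-D\log(1-r_D)$, which do not depend on the configuration.
\end{itemize}
Hence the bound $-D^2(\Delta_x^2/(16\zeta(3))-o(1))$ holds uniformly.

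\emph{Conclusion.} Integrating over $\lambda$, $z$ and $s$ exactly as in the proof of Proposition~\ref{prop:uniform-estimate} gives
\[
\mu_D\bigl(B_A([I],xD)\bigr)\le \exp\Bigl\{-D^2\Bigl(\frac{\Delta_x^2}{16\zeta(3)}-o(1)\Bigr)\Bigr\}+e^{-cD^{5/2}}.
\]
Taking $-D^{-2}\log$ and then $\liminf$ yields \eqref{eq:vanishing-regularization-conclusion}.
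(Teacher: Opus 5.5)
Your proposal is correct and follows the paper's proof essentially step for step: the same error bookkeeping with $\rho_D^{-1}\asymp M^{-1/2}D^{1/4}$, the same observation that the Gaussian tail survives because $\rho_D^{-1}=O(D^{1/2})$, and the same application of Lemma~\ref{lem:abel-sharp-coefficient} with deficit $\Delta_x$, followed by the Weyl cancellation and scalar integrations of Proposition~\ref{prop:uniform-estimate}. Your check that the $o(1)$ in that lemma is uniform over all admissible configurations is a worthwhile clarification of a point the paper passes over silently, since the Fourier lower bound, the quantity $\sum_k(1-r_D^{k/2})/k^2$, Cauchy--Schwarz, and the two Abel error terms are all independent of the configuration.
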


\begin{proof}
The choice \eqref{eq:vanishing-regularizer} satisfies
$h_D\to0$, $\rho_D\asymp h_D$, and $\log(1/h_D)=o(D)$.  The comparison terms
in the proof of Proposition~\ref{prop:uniform-estimate} obey
\[
 \frac{\rho_D^{-1}MD^{3/2}}{D^2}
 =O(M^{1/2}D^{-1/4})=o(1),
 \qquad
 \frac{\rho_D^{-1}D\sqrt M}{D^2}=O(D^{-3/4})=o(1),
\]
while
\[
 \frac{MD^{3/2}}{D^2}=o(1),
 \qquad
 \frac{M\sqrt D}{D^2}=o(1),
 \qquad
 \frac{D\log(1/\rho_D)}{D^2}=o(1).
\]
The error from comparing the spherical averages satisfies
$\rho_D^{-1}\delta_D=o(1)$.  Moreover,
\[
 \frac1{D^2}\log\mathfrak C_D\to0,
 \qquad
 \frac1{D^2}\log\mathfrak B_D\to0,
 \qquad
 \frac1{D^2}\log\frac{x^{D-1}}{D-1}\to0.
\]
The full bad-event contribution remains $e^{-\Omega(D^{5/2})}$, since all
positive terms preceding its Gaussian tail are $o(D^{5/2})$ for the present
choice of $\rho_D$.

For every configuration retained in the full-sphere integral, the
principal-angle second moment is at most
$x^2$, so its deficit from $\pi^2/3$ is at least $\Delta_x$.  Applying
Lemma~\ref{lem:abel-sharp-coefficient} with \eqref{eq:vanishing-regularizer},
then repeating the Weyl cancellation and scalar integrations of
Proposition~\ref{prop:uniform-estimate}, gives
\[
 \limsup_{\substack{D\to\infty\\D=2^n}}
 \frac1{D^2}\log\mu_D\!\left(B_A([I],xD)\right)
 \le-\frac{\Delta_x^2}{16\zeta(3)}.
\]
This is \eqref{eq:vanishing-regularization-conclusion}.
\end{proof}

\begin{proof}[Proof of Corollary~\ref{cor:local-rate}]
Recall $x_\star=\pi/\sqrt3$ and write
\[
 \Delta_x=x_\star^2-x^2.
\]
Proposition~\ref{prop:vanishing-regularization} gives
\begin{equation}\label{eq:local-rate-geometric-lower}
 \underline I(x)\ge\frac{\Delta_x^2}{16\zeta(3)}.
\end{equation}

For the opposite inequality, Lemma~\ref{lem:CUE-energy-Fourier} and the Fourier
series of $\theta^2$ imply that every probability measure satisfying
\[
 \int_{\mathbb T}\vartheta\,d\nu
 \le\frac{\pi^2}{3}-\delta
\]
obeys
\begin{equation}\label{eq:CUE-rate-lower-quadratic}
 \mathcal I(\nu)\ge\frac{\delta^2}{16\zeta(3)}.
\end{equation}
Indeed,
\[
 \int_{\mathbb T}\vartheta\,d\nu-\frac{\pi^2}{3}
 =4\sum_{k\ge1}\frac{(-1)^k}{k^2}\Re\widehat\nu_k,
\]
and Cauchy--Schwarz in the norm of
\eqref{eq:CUE-rate-Fourier} gives \eqref{eq:CUE-rate-lower-quadratic}.

The coefficient is asymptotically attainable.  Let
$K_\delta=\lfloor e^{1/\delta}\rfloor$ and
$Z_K=\sum_{k=1}^Kk^{-3}$.  The density
\begin{equation}\label{eq:CUE-trial-density}
 \frac{d\nu_{\delta,K}}{d\theta}
 =\frac1{2\pi}\left[
 1+2\sum_{k=1}^{K}
 \frac{\delta(-1)^{k+1}}{4Z_Kk}\cos(k\theta)
 \right]
\end{equation}
is nonnegative for all sufficiently small $\delta$.  Indeed, its bracket is at
least $1-\delta H_K/(2Z_K)$, where
$H_K=\sum_{k\le K}k^{-1}$; for $K=K_\delta$ one has
$\delta H_K\le1+o(1)$ and $Z_K\to\zeta(3)$.  Direct calculation gives
\[
 \int_{\mathbb T}\vartheta\,d\nu_{\delta,K}
 =\frac{\pi^2}{3}-\delta
\]
and
\begin{equation}\label{eq:CUE-trial-energy}
 \mathcal I(\nu_{\delta,K})
 =\frac{\delta^2}{16Z_K}
 =\frac{\delta^2}{16\zeta(3)}(1+o(1))
 \qquad(\delta\downarrow0).
\end{equation}

Fix $\varepsilon>0$ and choose
$\delta=(1+\varepsilon)\Delta_x$.  The resulting trial measure has second
moment strictly below $x^2$.  Applying the open-set lower bound in
Theorem~\ref{thm:CUE-LDP} to a sufficiently small weak neighborhood of this
measure and using Proposition~\ref{prop:log-path} yields
\[
 \overline I(x)
 \le\frac{(1+\varepsilon)^2\Delta_x^2}{16\zeta(3)}(1+o(1))
 \qquad(x\uparrow x_\star).
\]
First take $D\to\infty$ in the definitions of the lower and upper rates.  Then
let $x\uparrow x_\star$, and finally let $\varepsilon\downarrow0$.  Combining the
resulting upper bound with \eqref{eq:local-rate-geometric-lower} proves
\eqref{eq:local-rate-coefficient}.
\end{proof}

\begin{corollary}
\label{cor:moments-logpath}
For every fixed $1\le p<\infty$ there is $C_p<\infty$ such that
\begin{equation}\label{eq:Lp-rate}
 \E\left|
 \frac{d_A([I],[U])}{D}-\frac{\pi}{\sqrt3}
 \right|^p
 \le C_p\eta_D^p
\end{equation}
for all sufficiently large $D$.  In particular,
\begin{equation}\label{eq:mean-distance}
 \E d_A([I],[U])
 =\frac{\pi}{\sqrt3}D+O(\eta_DD).
\end{equation}
For the path $\gamma_U$ of Proposition~\ref{prop:log-path},
\begin{equation}\label{eq:logpath-gap}
 \Prob\!\left\{
 0\le\ell_A(\gamma_U)-d_A([I],[U])
 >C\eta_DD
 \right\}
 \le2e^{-cM^{1/2}D^{7/4}}
\end{equation}
for suitable absolute $c,C>0$.  Consequently,
$\ell_A(\gamma_U)/d_A([I],[U])\to1$ in probability.
\end{corollary}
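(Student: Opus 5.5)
The $L^p$ bound \eqref{eq:Lp-rate} will come from integrating the tail estimate of Theorem~\ref{thm:sharp} against the a priori bound $0\le d_A([I],[U])/D\le\pi$ from Proposition~\ref{prop:log-path}. Write
\[
 Y=\left|\frac{d_A([I],[U])}{D}-\frac{\pi}{\sqrt3}\right|.
\]
Then $Y\le\pi$ always. On the good event of \eqref{eq:sharp-concentration} we have $Y\le C_{\rm sh}\eta_D$, and its complement has probability at most $2e^{-c_{\rm sh}M^{1/2}D^{7/4}}$. Hence
\[
 \E Y^p\le C_{\rm sh}^p\eta_D^p+2\pi^pe^{-c_{\rm sh}M^{1/2}D^{7/4}}.
\]
Since $\eta_D^p$ decays only polynomially in $D$, the exponential term is at most $\eta_D^p$ for large $D$. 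This gives \eqref{eq:Lp-rate} with $C_p=C_{\rm sh}^p+1$. Taking $p=1$ and multiplying by $D$ gives \eqref{eq:mean-distance} by Jensen (or simply $|\E Y'|\le\E|Y'|$ for $Y'=d_A/D-\pi/\sqrt3$).

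For \eqref{eq:logpath-gap}, the inequality $\ell_A(\gamma_U)\ge d_A([I],[U])$ is automatic because $\gamma_U$ joins $[I]$ to $[U]$. For the upper bound on the gap, I would intersect two events.
\begin{itemize}
 \item[(i)] The lower-tail event \eqref{eq:sharp-lower-tail}: $d_A\ge(\pi/\sqrt3-C_-\eta_D)D$, which fails with probability at most $e^{-c_-M^{1/2}D^{7/4}}$.
 \item[(ii)] The CUE event $m_2(U)\le\pi^2/3+\eta_D$. This is admissible in Proposition~\ref{prop:CUE-m2} because $D^{-1/4}\le\eta_D\le\Delta_{\rm C}$ for large $D$, and it fails with probability at most $e^{-c_{\rm C}\eta_D^2D^2}=e^{-c_{\rm C}M^{1/2}D^{7/4}}$.
\end{itemize}
On (ii), Proposition~\ref{prop:log-path} and concavity of the square root give $\ell_A(\gamma_U)\le D\sqrt{m_2(U)}\le(\pi/\sqrt3+C_+\eta_D)D$. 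Subtracting the bound from (i), the gap is at most $(C_++C_-)\eta_DD$ on the intersection. A union bound then gives \eqref{eq:logpath-gap} with $C=C_++C_-$ and $c=\min\{c_-,c_{\rm C}\}$. The point to be careful about here is using the explicit path length bound rather than $d_A$ itself in (ii), since the upper half of Theorem~\ref{thm:sharp} is stated only for $d_A$.

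Finally, for the ratio, on the same intersection we have
\[
 1\le\frac{\ell_A(\gamma_U)}{d_A}\le1+\frac{(C_++C_-)\eta_D}{\pi/\sqrt3-C_-\eta_D}.
\]
The right-hand side tends to $1$ because $\eta_D\to0$. Since the exceptional probability also tends to zero, $\ell_A(\gamma_U)/d_A([I],[U])\to1$ in probability. There is no genuine obstacle; the only bookkeeping is checking that $\eta_D$ lies in the admissible range of Proposition~\ref{prop:CUE-m2} and that polynomial factors absorb the exponential tails.
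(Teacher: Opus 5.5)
Your proposal is correct and follows essentially the same route as the paper: the $L^p$ bound comes from splitting on the good event of Theorem~\ref{thm:sharp} and using the a priori bound from Proposition~\ref{prop:log-path} off it, and the gap bound comes from intersecting the lower-tail event with the CUE event $m_2(U)\le\pi^2/3+\eta_D$ and bounding $\ell_A(\gamma_U)$ directly. The only difference is cosmetic: your a priori bound $\pi$ on the deviation is slightly sharper than the paper's $2\pi$.
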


\begin{proof}
Let $\mathcal A_D$ be the good event
\[
 \mathcal A_D=\left\{
 \left|\frac{d_A([I],[U])}{D}-\frac{\pi}{\sqrt3}\right|
 \le C_{\rm sh}\eta_D\right\}.
\]
By \eqref{eq:sharp-concentration},
$\Prob(\mathcal A_D^c)\le2e^{-c_{\rm sh}M^{1/2}D^{7/4}}$.
On $\mathcal A_D$ the deviation is at most $C_{\rm sh}\eta_D$, and everywhere
it is bounded by $2\pi$ by \eqref{eq:log-path-upper}.  Thus
\[
 \E\left|\frac{d_A([I],[U])}{D}-\frac{\pi}{\sqrt3}\right|^p
 \le (C_{\rm sh}\eta_D)^p
      +2(2\pi)^p e^{-c_{\rm sh}M^{1/2}D^{7/4}},
\]
which proves \eqref{eq:Lp-rate}; its case $p=1$ gives
\eqref{eq:mean-distance}.
On the intersection of the lower-tail good event
\eqref{eq:sharp-lower-tail} and the CUE event
$m_2(U)\le\pi^2/3+\eta_D$, one has
\[
 \left(\frac{\pi}{\sqrt3}-C\eta_D\right)D
 \le d_A([I],[U])
 \le\ell_A(\gamma_U)
 \le\left(\frac{\pi}{\sqrt3}+C\eta_D\right)D.
\]
The two exceptional probabilities are bounded by the right-hand side of
\eqref{eq:logpath-gap}.  Since the common leading constant is positive, the
ratio conclusion follows.
\end{proof}

\section{Circuit bounds and further questions}\label{sec:circuit-outlook}

We compare $d_A$ with the projective operator-norm distance
$\delta_{\op}$ defined in Section~\ref{sec:model-results}.

\begin{proposition}\label{prop:projective-bridge}
For all $[U],[V]\in\PU(D)$,
\begin{equation}\label{eq:proj-bridge}
 d_A([U],[V])
 \le2D\arcsin\frac{\delta_{\op}([U],[V])}{2}.
\end{equation}
\end{proposition}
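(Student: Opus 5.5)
By right invariance, $d_A([U],[V])=d_A([I],[UV^{-1}])$. Right multiplication by $V^{-1}$ preserves the operator norm, so $\delta_{\op}([U],[V])=\delta_{\op}([I],[W])$ with $W=UV^{-1}$. It therefore suffices to prove
\[
 d_A([I],[W])\le 2D\arcsin\frac{\delta_{\op}([I],[W])}{2}
\]
for every $W\in U(D)$. We may also assume $\delta_{\op}([I],[W])<2$. The arcsine argument is at most one, and at $\delta_{\op}=2$ the right-hand side is $\pi D$, which already dominates $d_A$ by \eqref{eq:log-path-upper}.

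Since $\delta_{\op}$ is an infimum over phases of a continuous function on a compact set, it is attained. Fix $\phi$ with $\|I-e^{i\phi}W\|_{\op}=\delta:=\delta_{\op}([I],[W])$, and put $W'=e^{i\phi}W$, so that $[W']=[W]$. Write the eigenvalues of $W'$ as $e^{i\theta_j}$ with principal angles $\theta_j\in(-\pi,\pi]$. Then $|1-e^{i\theta_j}|=2|\sin(\theta_j/2)|\le\delta$ for every $j$. Because $\delta<2$, no $\theta_j$ equals $\pi$, and hence every $|\theta_j|\le 2\arcsin(\delta/2)$, since $t\mapsto 2\sin(t/2)$ is increasing on $[0,\pi]$.

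Now apply Proposition~\ref{prop:log-path} to $W'$:
\[
 d_A([I],[W])\le D\sqrt{m_2(W')}\le D\max_j|\theta_j|\le 2D\arcsin\frac{\delta}{2}.
\]
The centering step in that proposition only decreases the Hilbert--Schmidt norm, so no additional care is needed there. The only point requiring attention is the choice of a minimizing phase, so that $W'$ itself has all its eigenangles controlled; the rest is bookkeeping.
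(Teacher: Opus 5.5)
Your proof is correct and is essentially the paper's argument. Both take a phase representative attaining $\delta_{\op}$, use its centered principal logarithm as the generator, and bound the normalized Hilbert--Schmidt norm of that generator by the largest eigenangle $2\arcsin(\delta/2)$. The only difference is packaging: you reduce to $[I]$ by right invariance and cite Proposition~\ref{prop:log-path}, whereas the paper writes down the path $[Ue^{itK}]$ from $[U]$ directly and bounds its velocity $iUKU^*$ using $A\le D^2I$.
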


\begin{proof}
Set $\delta=\delta_{\op}([U],[V])$.  Choose $\phi\in\mathbb R$ so that
$\|I-e^{-i\phi}U^*V\|_{\op}=\delta$, and write
\[
 e^{-i\phi}U^*V=e^{i\Theta},
\]
where $\Theta$ is the principal Hermitian logarithm.  Let
$\theta_1,\ldots,\theta_D\in[-\pi,\pi]$ be its eigenvalues.  The
operator-norm bound implies
\[
 |\theta_j|\le \alpha:=2\arcsin(\delta/2),\qquad 1\le j\le D.
\]
Let $K=\Theta-D^{-1}\Tr(\Theta)I$.  Then $[V]=[Ue^{iK}]$, and the eigenvalues
of $K$ lie in an interval of length at most $2\alpha$.  The variance bound for
numbers in an interval gives
\begin{equation}\label{eq:centered-log-bound}
 \|K\|_0\le\alpha.
\end{equation}
Define $\gamma:[0,1]\to\PU(D)$ by $\gamma(t)=[Ue^{itK}]$.  Its
right-trivialized velocity is
$iUKU^*$, so
\[
 \|\dot\gamma(t)\|_A
 \le\sqrt{\lambda_{\max}(A)}\,\|UKU^*\|_0
 =D\|K\|_0.
\]
Integrating and using \eqref{eq:centered-log-bound} proves
\eqref{eq:proj-bridge}.
\end{proof}

\begin{lemma}\label{lem:gate-lengths}
An arbitrary projective two-qubit gate has Nielsen distance at most $\pi$ from
the identity.  A tensor product of $n$ one-qubit gates has distance at most
$\pi\sqrt n$.  Moreover, every no-ancilla circuit with $k$ arbitrary
two-qubit gates and arbitrary one-qubit gates can be written
\begin{equation}\label{eq:absorbed-factorization}
 V=L_{\rm out}G_k\cdots G_1,
\end{equation}
where every $G_j$ is an arbitrary two-qubit gate and $L_{\rm out}$ is one
local layer.  Consequently,
\begin{equation}\label{eq:circuit-upper}
 d_A([I],[V])\le\pi k+\pi\sqrt n.
\end{equation}
\end{lemma}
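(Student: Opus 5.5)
The plan has three parts: bound the length of one two-qubit gate, bound the length of one layer of one-qubit gates, and then absorb all one-qubit gates into a single final layer and sum by the triangle inequality.

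\emph{Two-qubit gates.} A projective two-qubit gate acting on qubits $a,b$ is $[g\otimes I]$ with $g\in U(4)$. I would take its principal logarithm and subtract the trace, as in Proposition~\ref{prop:log-path}. This gives $g=e^{i\phi}e^{iK}$, where $K$ is a traceless $4\times4$ Hermitian matrix with eigenvalues $\theta_j-\overline\theta$ and $\theta_j\in(-\pi,\pi]$. The embedded generator $i(K\otimes I)$ lies in the span of the Pauli strings of weights one and two supported on $\{a,b\}$. On that subspace $A=I$, so the one-parameter path $t\mapsto[e^{it(K\otimes I)}]$ has length $\|K\otimes I\|_0$. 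The normalization \eqref{eq:hs} makes $\|K\otimes I\|_0^2=\frac14\Tr K^2=\frac14\sum_j(\theta_j-\overline\theta)^2\le\frac14\sum_j\theta_j^2\le\pi^2$. Hence the gate has distance at most $\pi$.

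\emph{One-qubit layers.} For a tensor product $h_1\otimes\cdots\otimes h_n$ I would do the same thing one qubit at a time. Write $h_k=e^{i\phi_k}e^{iK_k}$ with $K_k$ traceless, so the total generator $\sum_k K_k^{(k)}$ lies in the weight-one span. The terms for different qubits commute, so the single path $t\mapsto[\exp(it\sum_k K_k^{(k)})]$ reaches the product. The weight-one blocks for distinct qubits are orthogonal in $\langle\cdot,\cdot\rangle_0$, so the length is $(\sum_k\|K_k^{(k)}\|_0^2)^{1/2}$. Each term satisfies $\|K_k^{(k)}\|_0^2=\frac12\Tr K_k^2\le\frac12\cdot\frac{(2\pi)^2}{2}=\pi^2$, since two eigenvalues summing to zero have absolute values at most $\pi$. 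The length is therefore at most $\pi\sqrt n$.

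\emph{Factorization.} I would prove \eqref{eq:absorbed-factorization} by moving every one-qubit gate to the output end. The key identity is that a one-qubit gate $u$ commutes past a two-qubit gate $G$ via $uG=(uGu'^{-1})u'$ with $u'=u$ when $u$ acts on a qubit of $G$. In that case $uG$ is itself a two-qubit gate on the same pair, so $u$ can be merged into $G$. If $u$ acts elsewhere, $u$ and $G$ simply commute. Processing the circuit from the input side, each one-qubit gate is either merged into the next two-qubit gate sharing its qubit or passed through. Gates that reach the end are collected into one local layer $L_{\rm out}$, a product of commuting one-qubit gates, one per qubit after merging. Arbitrary two-qubit gates stay arbitrary, so the number of two-qubit gates is still $k$.

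\emph{Conclusion.} Right invariance gives $d_A([I],[XY])\le d_A([I],[X])+d_A([I],[Y])$, since $d_A([Y],[XY])=d_A([I],[X])$. Summing the bounds from the first two steps yields \eqref{eq:circuit-upper}. The only care needed is the normalization bookkeeping: the $D^{-1}$ in $\|\cdot\|_0$ against the identity factors, which gives $\frac14\Tr$ for two-qubit gates and $\frac12\Tr$ for one-qubit gates. None of the steps is a real obstacle.
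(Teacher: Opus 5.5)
Your proposal is correct and follows essentially the same route as the paper: centered logarithms whose generators lie in the weight-$\le2$ span where $A=I$, Hilbert--Schmidt orthogonality of the commuting one-qubit generators, absorption of each one-qubit gate into the next two-qubit gate on its wire (the paper's ``pending gate'' bookkeeping), and the right-invariant triangle inequality. The conjugation identity $uG=(uGu^{-1})u$ that you call key is superfluous, and it moves $u$ toward the input rather than the output; the factorization only needs your next observation, that $u$ times a two-qubit gate on a pair containing $u$'s wire is again a two-qubit gate on that pair.
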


\begin{proof}
For a two-qubit gate acting on a fixed pair, choose a Hermitian logarithm
$K^{(2)}$ of a representative on $\mathbb C^4$ whose spectrum lies in an
interval of length at most $2\pi$, replace it by
$K^{(2)}-\tfrac14\Tr(K^{(2)})I_4$, and let $K$ denote its extension to the
$n$-qubit space.  Then $\|K\|_0\le\pi$.  Since $K$ has Pauli weight at most
two, the path $[e^{itK}]$ has length at most $\pi$.

For a local layer, center each one-qubit logarithm analogously by subtracting
half its trace and denote the resulting extensions by $K_1,\ldots,K_n$.
They commute, are Hilbert--Schmidt orthogonal, and satisfy
$\|K_j\|_0\le\pi$.  Therefore
\[
 \left\|\sum_{j=1}^nK_j\right\|_A^2
 =\sum_{j=1}^n\|K_j\|_0^2\le n\pi^2.
\]

To obtain \eqref{eq:absorbed-factorization}, process the circuit in temporal
order while retaining one pending local gate on each wire.  Immediately before
a two-qubit gate on wires $i,j$, multiply the two pending gates on those wires
into that arbitrary two-qubit gate.  Pending gates on all other wires commute
past it.  Iteration leaves the same number of two-qubit gates and one terminal
local layer.

Finally, let $F_1,\ldots,F_{k+1}$ be the factors in temporal order, with the
local layer last, set $V_0=I$, and define $V_j=F_jV_{j-1}$.  Right invariance gives
\[
 d_A([V_{j-1}],[V_j])=d_A([I],[F_j]).
\]
The triangle inequality and the two preceding length bounds prove
\eqref{eq:circuit-upper}.
\end{proof}

\begin{proof}[Proof of Corollary~\ref{cor:circuit}]
On the lower-tail event supplied by Theorem~\ref{thm:sharp}, let $V$ be a
circuit endpoint satisfying $\delta_{\op}([U],[V])\le\epsilon$.
Proposition~\ref{prop:projective-bridge}, Lemma~\ref{lem:gate-lengths}, and the
triangle inequality give
\[
 d_A([I],[U])
 \le\pi k+\pi\sqrt n+2D\arcsin\frac\epsilon2.
\]
Combining this with the lower side of \eqref{eq:sharp-concentration} and
dividing by $\pi$ gives \eqref{eq:circuit-main}.  For each fixed $\epsilon<\epsilon_*$, the coefficient of $D$ is
positive for all sufficiently large $D$.  The exceptional probability is
bounded as in \eqref{eq:circuit-probability}.
\end{proof}

\subsection*{Further questions}\label{sec:discussion}

The combination of the Jacobi lower bound and the CUE logarithmic upper bound
identifies the exact first-order Haar-typical distance:
\[
 d_A([I],[U])=\left(\frac{\pi}{\sqrt3}+o_{\Prob}(1)\right)D.
\]
The second-moment constant $\pi/\sqrt3$ is therefore the sharp
linear-radius threshold.

The rescaling in \eqref{eq:quarter-vars} balances the error from the moving
low-weight subspace against that from the off-diagonal commutator block.
More generally, rescaling by $T=G^\alpha$ gives leading pointwise error
terms of orders
\[
 q^\alpha MD
 \qquad\text{and}\qquad
 q^{-\alpha}M.
\]
For $q=D^{-2}$ their powers of $D$ are $1-2\alpha$ and $2\alpha$.  The maximum
of these exponents is minimized uniquely when
$1-2\alpha=2\alpha$, namely at $\alpha=1/4$.

Theorem~\ref{thm:output} applies to square blocks of time-dependent linear
systems.  It allows the comparison block to be singular, as occurs at
conjugate points.

Weyl's formula converts the regularized sinc product into a logarithmic-energy
problem on the circle.  The Abel estimate applies to either sign of the
second-moment deviation and also controls the CUE upper tail, giving the
same constant in the distance lower and upper bounds.

For Haar $U\in U(D)$, the centered principal-logarithm path
is asymptotically length minimizing to first order, as stated in
Corollary~\ref{cor:moments-logpath}.  Appendix~\ref{app:logpath-fluctuations}
records its order-one Gaussian fluctuations.
Determining the fluctuation scale of the true distance inside the present
$D^{7/8}(\log D)^{1/2}$ window remains open.

Theorem~\ref{thm:ball-probabilities} shows that $D^2$ is the correct
large-deviation speed below the threshold, and
Corollary~\ref{cor:local-rate} determines the near-threshold coefficient
$1/(16\zeta(3))$ as the threshold is approached.  A natural next problem is
the exact fixed-radius rate function
\[
 I(x)=\lim_{D\to\infty}-\frac1{D^2}
 \log\mu_D\!\left(B_A([I],xD)\right),
 \qquad 0<x<\frac{\pi}{\sqrt3}.
\]
After Weyl cancellation, the problem becomes a constrained
logarithmic-energy problem on the circle.  Identifying the geometric rate
function would additionally require a two-sided comparison between the
Jacobi determinant
and that of the constant-coefficient model.

Extending the argument to a variable cliff height $Q=Q_D$ would require
bounds for the errors from rescaling, transporting the low-weight subspace,
and omitting its momentum component.  The radial scaling and the spectral
bound along minimizing geodesics also depend on $Q_D$.  The negative term
in the Abel estimate would have to dominate all these comparison errors.

\appendix
\section{Fluctuations of the logarithmic path}
\label{app:logpath-fluctuations}

Throughout this appendix, $U$ is Haar distributed on $U(D)$ and
$\gamma_U$ is the centered principal-logarithm path from
Proposition~\ref{prop:log-path}.  The path depends on the
representative $U$, whereas the distance depends only on its projective
class in $\PU(D)$.  We write
$\mathcal N(0,\sigma^2)$ for the centered normal distribution with variance
$\sigma^2$.

\begin{lemma}\label{lem:principal-angle-statistics}
Let $\theta_1,\ldots,\theta_D\in(-\pi,\pi]$ be the principal eigenangles of
$U$, and set
\[
 S_2=\sum_{j=1}^D\theta_j^2,
 \qquad
 S_1=\sum_{j=1}^D\theta_j.
\]
Then
\begin{equation}\label{eq:CUE-m2-CLT}
 S_2-D\frac{\pi^2}{3}
 \ \Longrightarrow\ \mathcal N(0,8\zeta(3)),
\end{equation}
and
\begin{equation}\label{eq:S1-variance}
 \E S_1^2
 =2\sum_{k\ge1}\frac{\min\{k,D\}}{k^2}
 =O(\log D).
\end{equation}
\end{lemma}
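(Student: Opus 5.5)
Both assertions come from writing $\theta^2$ and $\theta$ on $(-\pi,\pi]$ as Fourier series and then applying linear-statistic results for CUE. The principal angle $\phi=\Arg z$ has
\[
 \phi^2=\frac{\pi^2}{3}+4\sum_{k\ge1}\frac{(-1)^k}{k^2}\cos(k\phi),
 \qquad
 \phi=2\sum_{k\ge1}\frac{(-1)^{k+1}}{k}\sin(k\phi).
\]
The first identity was already used in Lemma~\ref{lem:abel}. Writing $p_k=\Tr U^k=\sum_j e^{ik\theta_j}$, these give
\[
 S_2-D\frac{\pi^2}{3}=4\sum_{k\ge1}\frac{(-1)^k}{k^2}\Re p_k,
 \qquad
 S_1=2\sum_{k\ge1}\frac{(-1)^{k+1}}{k}\Im p_k .
\]
The first series converges absolutely, since $|p_k|\le D$ and $\sum k^{-2}<\infty$. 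The second converges for almost every configuration and also in $L^2$. This is because the function $\theta$ is bounded with bounded partial sums, so we may take expectations termwise, or regularize by an Abel factor $r^k$ and let $r\uparrow1$.

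For \eqref{eq:CUE-m2-CLT} I will invoke Diaconis--Evans \cite[Theorem~5.1]{DiaconisEvans}. For a real $f\in L^2(\mathbb T)$ with $\sum_k|k|\,|\widehat f_k|^2<\infty$, it states that $\sum_j f(e^{i\theta_j})-D\widehat f_0\Rightarrow\mathcal N(0,2\sum_{k\ge1}k|\widehat f_k|^2)$. Here $f=\vartheta$ is continuous on $\mathbb T$, and its Fourier coefficients are $\widehat f_{\pm k}=2(-1)^k/k^2$. The condition holds because $\sum_k k\cdot 4/k^4<\infty$, and the variance is
\[
 2\sum_{k\ge1}k\cdot\frac{4}{k^4}=8\zeta(3).
\]
This matches the appendix's claim.

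For \eqref{eq:S1-variance} I will use the exact Diaconis--Shahshahani moment identity for Haar unitaries,
\[
 \E\, p_k\overline{p_l}=\delta_{kl}\min\{k,D\},\qquad \E\, p_kp_l=0\quad (k,l\ge1).
\]
This gives $\E(\Im p_k)(\Im p_l)=\tfrac12\delta_{kl}\min\{k,D\}$. Substituting into the sine series yields
\[
 \E S_1^2=4\sum_{k\ge1}\frac{1}{k^2}\cdot\frac{\min\{k,D\}}{2}=2\sum_{k\ge1}\frac{\min\{k,D\}}{k^2}.
\]
Splitting the sum at $k=D$ gives $2H_D+2D\sum_{k>D}k^{-2}=2\log D+O(1)$.

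The main obstacle is justifying the termwise second moment for $S_1$. The sine series of the sawtooth converges only conditionally and has a jump at $\theta=\pi$. I will handle it in three steps.

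\begin{enumerate}
\item Truncate at $K$: let $S_1^{(K)}$ be the $K$-th partial sum of the sine series for $S_1$.
\item Use orthogonality from the moment identity to compute $\E|S_1^{(K)}-S_1^{(K')}|^2=2\sum_{K<k\le K'}\min\{k,D\}/k^2$, which tends to $0$ as $K,K'\to\infty$. So $S_1^{(K)}$ is Cauchy in $L^2$.
\item Identify the $L^2$ limit with $S_1$. The partial sums converge pointwise away from $\theta=\pi$, an event of probability zero, and they are uniformly bounded by a constant times $D$, so dominated convergence applies.
\end{enumerate}

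The same identification of the limit also justifies the Diaconis--Evans application to $S_2$, although for $S_2$ absolute convergence already suffices.
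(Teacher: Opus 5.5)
Your proposal is correct and follows the paper's proof. You compute the Fourier coefficients of $\theta^2$, invoke Diaconis--Evans \cite[Theorem~5.1]{DiaconisEvans} with variance $\sum_{k\ne0}|k|\,|\widehat f_{2,k}|^2=8\zeta(3)$, and obtain $\E S_1^2$ by applying the exact CUE covariance identity to truncations of the sawtooth series and passing to the $L^2$ limit. Your justification of that limit, via almost-sure pointwise convergence and the uniform bound on the sawtooth partial sums, makes explicit a step the paper states in one phrase.
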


\begin{proof}
We use the Fourier convention
\[
 \widehat f_k=\frac1{2\pi}\int_{-\pi}^{\pi}
 f(\theta)e^{-ik\theta}\,d\theta.
\]
For $f_2(\theta)=\theta^2$ on $(-\pi,\pi]$, the Fourier coefficients are
\[
 \widehat f_{2,0}=\frac{\pi^2}{3},
 \qquad
 \widehat f_{2,k}=\frac{2(-1)^k}{k^2},\quad k\ne0.
\]
Hence
\[
 \sum_{k\ne0}|k|\,|\widehat f_{2,k}|^2=8\zeta(3)<\infty.
\]
The CUE linear-statistic theorem of Diaconis and Evans
\cite[Theorem~5.1]{DiaconisEvans} therefore gives
\eqref{eq:CUE-m2-CLT} with variance $8\zeta(3)$.  The same limit also follows
from the circular $\beta$-ensemble ($C\beta E$) central limit theorem of
Feng, Tian, and Wei \cite[Theorem~1.2]{FengTianWei}, applied at $\beta=2$ to
the centered periodic function $f_2-\pi^2/3\in W^{1,p}(S^1)$,
$1<p<\infty$.

For $f_1(\theta)=\theta$, interpreted as its $2\pi$-periodic $L^2$ function,
$|\widehat f_{1,k}|=1/|k|$ for $k\ne0$.  Applying the exact CUE covariance
identity \cite[Theorem~2.1(b)]{DiaconisEvans}
\[
 \E\!\left[\Tr(U^k)\overline{\Tr(U^\ell)}\right]
 =\delta_{k\ell}\min\{k,D\},\qquad k,\ell\ge1
\]
to the truncated Fourier series of $f_1$ and then passing to the $L^2$ limit
gives \eqref{eq:S1-variance}.  In particular, $S_1^2/D\to0$ in probability.
\end{proof}

\begin{lemma}\label{lem:adjoint-isotropy}
Let $H$ be a fixed traceless Hermitian matrix and let $V$ be Haar distributed
on $U(D)$.  Then
\begin{equation}\label{eq:adjoint-isotropy}
 \E_V\|P(iVHV^*)\|_0^2
 =\frac{M}{D^2-1}\|H\|_0^2.
\end{equation}
The same identity holds conditionally on the eigenangles of a Haar unitary for
$H=\diag(\theta_1-\bar\theta,\ldots,\theta_D-\bar\theta)$, where
$\bar\theta=D^{-1}\sum_j\theta_j$.
\end{lemma}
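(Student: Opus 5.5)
The identity \eqref{eq:adjoint-isotropy} is a Schur-lemma computation. We average the quadratic form $X\mapsto\|PX\|_0^2$ over the adjoint orbit and use that the adjoint representation of $U(D)$ on $\mathfrak g=\mathfrak{pu}(D)$ is irreducible over $\mathbb R$. The plan is to define the averaged operator
\[
 \overline P=\E_V\bigl[\Ad_V^{-1}P\Ad_V\bigr]
 =\int_{U(D)}\Ad_{V^*}P\Ad_V\,dV
\]
on $\mathfrak g$. By invariance of Haar measure, $\overline P$ commutes with every $\Ad_W$. It is also self-adjoint for $\langle\cdot,\cdot\rangle_0$, since each $\Ad_V$ is orthogonal.

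The adjoint action on traceless skew-Hermitian matrices is real-irreducible; it is the real form of the irreducible complex adjoint representation of $SU(D)$. Hence, by Schur's lemma, a self-adjoint intertwiner is a real scalar, $\overline P=\lambda I$. Taking traces over the real space $\mathfrak g$ of dimension $N=D^2-1$ gives
\[
 \lambda N=\Tr\overline P=\Tr P=M,
\]
using cyclicity and $\Ad_{V^*}=\Ad_V^{-1}$. Therefore $\E_V\|P\Ad_V(iH)\|_0^2=\langle iH,\overline P\,iH\rangle_0=\frac{M}{D^2-1}\|H\|_0^2$, since $\|iH\|_0=\|H\|_0$. This proves \eqref{eq:adjoint-isotropy} for fixed traceless $H$.

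For the conditional statement, write a Haar unitary as $U=V\diag(e^{i\theta_j})V^*$. The Weyl integration formula says that, conditional on the eigenangles, the eigenvector matrix $V$ can be taken Haar distributed and independent of the angles; the torus and permutation ambiguities are absorbed by the invariance of Haar measure. The matrix $H=\diag(\theta_j-\bar\theta)$ is a measurable function of the angles and is traceless. Applying the fixed-$H$ identity conditionally, via Fubini, gives the claim.

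The only point needing care is the real irreducibility invoked in the Schur step. We justify it by noting that the complexification of $\mathfrak{pu}(D)$ is $\mathfrak{sl}(D,\mathbb C)$, which is simple, so it is irreducible under $SU(D)$. A real invariant subspace would complexify to a complex invariant subspace, so none exists. Commutation with $\Ad$ then forces $\overline P$ to be a real scalar, because it is self-adjoint and so has real eigenspaces that are invariant.
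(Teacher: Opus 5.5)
Your proposal is correct and follows the paper's proof: you average $\Ad_V^{-1}P\Ad_V$ over Haar measure, use irreducibility of the adjoint action and Schur's lemma to get a scalar, fix that scalar as $M/(D^2-1)$ by taking traces, and treat the conditional statement via the Haar distribution of the eigenvector frame given a simple spectrum. Two details the paper leaves implicit are filled in by your arguments: real irreducibility via the complexification $\mathfrak{sl}(D,\mathbb C)$, and the real Schur step via self-adjointness of the averaged operator.
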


\begin{proof}
With $dV$ denoting normalized Haar measure, the averaged operator
\[
 \mathcal A=\int_{U(D)}\Ad_{V}^{-1}P\Ad_V\,dV
\]
commutes with the adjoint action on the real Hilbert space of traceless
skew-Hermitian matrices.  This representation is irreducible, so Schur's
lemma gives $\mathcal A=cI$.  Taking traces yields
$c=\rank(P)/(D^2-1)=M/(D^2-1)$, which proves
\eqref{eq:adjoint-isotropy}.  Conditional on a simple spectrum, the eigenvector
matrix is Haar on the corresponding conjugacy orbit; the repeated-spectrum
set has Haar measure zero.
\end{proof}

\begin{proposition}
\label{prop:logpath-CLT}
For Haar $U\in U(D)$, the path $\gamma_U$ satisfies
\begin{equation}\label{eq:logpath-CLT}
 \ell_A(\gamma_U)-\frac{\pi}{\sqrt3}D
 \ \Longrightarrow\
 \mathcal N\!\left(0,\frac{6\zeta(3)}{\pi^2}\right).
\end{equation}
\end{proposition}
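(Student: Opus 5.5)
We write $\ell_A(\gamma_U)=\|iK_U\|_A$ exactly and decompose it into a CUE second-moment part and small corrections.

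Since $A=P+D^2P^\perp$, we have
\[
 \ell_A(\gamma_U)^2=D^2\|K_U\|_0^2-(D^2-1)\|P(iK_U)\|_0^2 .
\]
Put $S_2,S_1$ as in Lemma~\ref{lem:principal-angle-statistics}. Then
\[
 D^2\|K_U\|_0^2=D\sum_j(\theta_j-\bar\theta)^2=DS_2-S_1^2 .
\]
Hence
\[
 \ell_A(\gamma_U)^2=DS_2-S_1^2-(D^2-1)\|P(iK_U)\|_0^2 .
\]
By Lemma~\ref{lem:adjoint-isotropy}, applied conditionally on the eigenangles with $H=\diag(\theta_j-\bar\theta)$,
\[
 \E\bigl[(D^2-1)\|P(iK_U)\|_0^2\,\big|\,\theta\bigr]=M\|K_U\|_0^2\le M\pi^2 .
\]
By Markov's inequality, the last term is therefore $O_{\Prob}(M)=O_{\Prob}((\log D)^2)$. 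Also $S_1^2=O_{\Prob}(\log D)$ by \eqref{eq:S1-variance}. Together,
\[
 \ell_A(\gamma_U)^2=DS_2+O_{\Prob}((\log D)^2).
\]

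Now linearize the square root. Write $S_2=D\pi^2/3+Z_D$, where $Z_D\Rightarrow\mathcal N(0,8\zeta(3))$ by \eqref{eq:CUE-m2-CLT}. Then
\[
 \ell_A(\gamma_U)=D\sqrt{\frac{\pi^2}{3}+\frac{Z_D}{D}+O_{\Prob}\!\left(\frac{(\log D)^2}{D^2}\right)}
 =\frac{\pi}{\sqrt3}D+\frac{\sqrt3}{2\pi}Z_D+O_{\Prob}\!\left(\frac{1}{D}+\frac{(\log D)^2}{D}\right).
\]
The remainder comes from the second-order Taylor term $Z_D^2/D$ and the correction term $(\log D)^2/D$. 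Both tend to $0$ in probability. By Slutsky's theorem, $\ell_A(\gamma_U)-\frac{\pi}{\sqrt3}D\Rightarrow\frac{\sqrt3}{2\pi}\mathcal N(0,8\zeta(3))$. This normal law has variance $\frac{3}{4\pi^2}\cdot8\zeta(3)=6\zeta(3)/\pi^2$, which is the claim.

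The main point to check is the low-weight correction: its naive size is $D^2\|P(iK_U)\|_0^2$, which need not be small deterministically. The isotropy lemma is what reduces it to order $M$ in expectation. It requires the conditional Haar distribution of the eigenvectors given the spectrum. This holds because for Haar $U$ the eigenvector matrix is independent of the eigenangles up to the torus and permutations, and $P$-compression is insensitive to these. The rest is a routine delta-method argument.
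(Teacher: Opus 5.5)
Your proposal is correct and follows essentially the paper's argument: the exact identity $\ell_A(\gamma_U)^2=D^2\|K_U\|_0^2-(D^2-1)\|P(iK_U)\|_0^2$, the conditional isotropy lemma making the low-weight correction $O(M)$ in mean, $S_1^2=O_{\Prob}(\log D)$, and a delta-method linearization of the square root giving variance $\frac{3}{4\pi^2}\cdot 8\zeta(3)=6\zeta(3)/\pi^2$. The only difference is the order of the steps. You absorb the correction into the squared length and then take a single Taylor step. The paper instead applies the delta method to $D\|K_U\|_0$ first, and then bounds $\E\bigl|D\|K_U\|_0-\ell_A(\gamma_U)\bigr|\le\pi M/D$ using $1-\sqrt{1-t}\le t$.
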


\begin{proof}
With
\[
 v_D=\|K_U\|_0^2
 =\frac{S_2}{D}-\left(\frac{S_1}{D}\right)^2,
\]
Lemma~\ref{lem:principal-angle-statistics} gives
\[
 D\left(v_D-\frac{\pi^2}{3}\right)
 \ \Longrightarrow\ \mathcal N(0,8\zeta(3)).
\]
The delta method therefore yields
\begin{equation}\label{eq:HS-logpath-CLT}
 D\sqrt{v_D}-\frac{\pi}{\sqrt3}D
 \ \Longrightarrow\
 \mathcal N\!\left(0,\frac{6\zeta(3)}{\pi^2}\right).
\end{equation}

It remains to compare the Hilbert--Schmidt upper bound with the path
length.  Since the path velocity is $iK_U$,
\[
 \ell_A(\gamma_U)^2
 =D^2\|K_U\|_0^2-(D^2-1)\|P(iK_U)\|_0^2.
\]
Conditional on the eigenangles, Lemma~\ref{lem:adjoint-isotropy} gives
\[
 \E\!\left(\|P(iK_U)\|_0^2\mid\theta_1,\ldots,\theta_D\right)
 =\frac{M}{D^2-1}\|K_U\|_0^2.
\]
If $K_U\ne0$, the inequality $1-\sqrt{1-t}\le t$ gives
\[
 0\le D\|K_U\|_0-\ell_A(\gamma_U)
 \le D\|K_U\|_0
 \frac{D^2-1}{D^2}
 \frac{\|P(iK_U)\|_0^2}{\|K_U\|_0^2};
\]
the case $K_U=0$ is immediate.  Since $\|K_U\|_0\le\pi$, conditional
expectation yields
\[
 \E\bigl|D\|K_U\|_0-\ell_A(\gamma_U)\bigr|
 \le\frac{\pi M}{D}=o(1).
\]
Combining this estimate with \eqref{eq:HS-logpath-CLT} proves
\eqref{eq:logpath-CLT}.
\end{proof}


\begin{thebibliography}{99}

\bibitem{NielsenQIC}
M.~A. Nielsen,
\emph{A geometric approach to quantum circuit lower bounds},
Quantum Inf. Comput. \textbf{6} (2006), 213--262.

\bibitem{NielsenScience}
M.~A. Nielsen, M.~R. Dowling, M.~Gu, and A.~C. Doherty,
\emph{Quantum computation as geometry},
Science \textbf{311} (2006), 1133--1135.

\bibitem{NielsenPRA}
M.~A. Nielsen, M.~R. Dowling, M.~Gu, and A.~C. Doherty,
\emph{Optimal control, geometry, and quantum computing},
Phys. Rev. A \textbf{73} (2006), 062323.

\bibitem{DowlingNielsen}
M.~R. Dowling and M.~A. Nielsen,
\emph{The geometry of quantum computation},
Quantum Inf. Comput. \textbf{8} (2008), 861--899.

\bibitem{BrownBG}
A.~R. Brown,
\emph{A quantum complexity lower bound from differential geometry},
Nature Phys. \textbf{19} (2023), 401--406;
arXiv:2112.05724.

\bibitem{BrownPoly}
A.~R. Brown,
\emph{Polynomial equivalence of complexity geometries},
Quantum \textbf{8} (2024), 1391.

\bibitem{YuanCircuit}
P.~Yuan, J.~Allcock, and S.~Zhang,
\emph{Does qubit connectivity impact quantum circuit complexity?},
IEEE Trans. Comput.-Aided Des. Integr. Circuits Syst. \textbf{43} (2024),
520--533.

\bibitem{DiaconisEvans}
P.~Diaconis and S.~N. Evans,
\emph{Linear functionals of eigenvalues of random matrices},
Trans. Amer. Math. Soc. \textbf{353} (2001), 2615--2633.

\bibitem{HiaiPetz}
F.~Hiai and D.~Petz,
\emph{A large deviation theorem for the empirical eigenvalue distribution of
random unitary matrices},
Ann. Inst. H. Poincar\'e Probab. Statist. \textbf{36} (2000), no.~1,
71--85.

\bibitem{RibeiroTrancanelli}
M.~Rios Ribeiro and D.~Trancanelli,
\emph{Nielsen complexity with multiple cost factors},
arXiv:2606.02817 (2026).

\bibitem{LBLP}
A.~Le Brigant, L.~Lichtenfelz, and S.~C. Preston,
\emph{Conjugate points on Lie groups with left-invariant metrics},
arXiv:2408.03854v3 (2025).

\bibitem{Kato}
T.~Kato,
\emph{Perturbation Theory for Linear Operators},
Springer, Berlin, 1995.

\bibitem{Bhatia}
R.~Bhatia,
\emph{Matrix Analysis},
Springer, New York, 1997.

\bibitem{Hall}
B.~C. Hall,
\emph{Lie Groups, Lie Algebras, and Representations: An Elementary
Introduction}, 2nd ed., Springer, Cham, 2015.

\bibitem{Macdonald}
I.~G. Macdonald,
\emph{The volume of a compact Lie group},
Invent. Math. \textbf{56} (1980), 93--95.

\bibitem{Mehta}
M.~L. Mehta,
\emph{Random Matrices}, 3rd ed., Elsevier, Amsterdam, 2004.

\bibitem{Chavel}
I.~Chavel,
\emph{Riemannian Geometry: A Modern Introduction}, 2nd ed.,
Cambridge University Press, Cambridge, 2006.

\bibitem{FengTianWei}
R.~Feng, G.~Tian, and D.~Wei,
\emph{The Berry--Esseen theorem for circular $\beta$-ensemble},
Ann. Appl. Probab. \textbf{33} (2023), no.~6B, 5050--5070.

\end{thebibliography}
\end{document}